\documentclass[11pt]{amsart}
\usepackage{amsmath, amsfonts, amsthm, amssymb}
\usepackage[all,cmtip,line]{xy}
\usepackage{array}
\usepackage{enumerate}
\usepackage{srcltx} 

\oddsidemargin .5in
\evensidemargin .5in
\textwidth      5.8in
\topmargin  1.0cm
\textheight     8.5in
\setlength{\footskip}{0.5in}

\newtheorem{thm}{Theorem}[section]
\newtheorem{lem}{Lemma}[section]
\newtheorem{prop}[lem]{Proposition}
\newtheorem{cor}[lem]{Corollary}
\newtheorem{defn}[lem]{Definition}
\newtheorem{rem}[lem]{Remark}

\numberwithin{equation}{section}





\newcommand{\supp}{ \mbox{supp}}






\newcommand \eps{\varepsilon}


\newlength{\originalbase}
\setlength{\originalbase}{\baselineskip}
\newcommand{\spacing}[1]{\setlength{\baselineskip}{#1\originalbase}}

\begin{document}               

\newcommand{\avint}{{- \hspace{-3.5mm} \int}}

\spacing{1}

\title[rigorous treatment of moist convection]{A rigorous treatment of moist convection in a single column}

\author{Bin Cheng}
\address{Department of Mathematics\\
         University of Surrey\\GU2 7XH
        UK}
\email{B.Cheng@surrey.ac.uk}

\author{Jingrui Cheng}
\address{Department of Mathematics\\
         University of Wisconsin\\
         Madison WI 53706\\
USA}
\email{jrcheng@math.wisc.edu}

\author{Michael Cullen}
\address{
Met Office\\ Fitzroy Road\\ Exeter Devon EX1 3PB\\ UK}
\email{mike.cullen@metoffice.gov.uk}

\author{John Norbury}
\address{Mathematical Institute\\
University of Oxford\\
Oxford\\OX 2 6GG\\
UK}
\email{John.Norbury@lincoln.ox.ac.uk}

\author{Matthew Turner}
\address{Department of Mathematics\\
         University of Surrey\\GU2 7XH
        UK}
\email{M.Turner@surrey.ac.uk}

\date{\today}

\begin{abstract}

We study a single column model of moist convection in the atmosphere. We state the conditions for it to represent a stable steady state. We then evolve the column by subjecting it to an upward displacement which can release instability, leading to a time dependent sequence of stable steady states. We propose a definition of measure valued solution to describe the time dependence and prove its existence.
\end{abstract}
\maketitle
$\mathbf{Key\,words}$: Rearrangement, measure-valued solutions, Lagrangian equations.

$\mathbf{AMS\,\,subject\,\,Classifications}$: 35B45, 35D30, 35Q86.
\section{Introduction}

This paper studies a simple mathematical model of moist convection in the atmosphere set out in Bokhove $et\, a. $ \cite{report}.  Moist convection is responsible for much of the severe weather in the extratropics, and is the main driver of the tropical circulation which is a fundamental part of the climate system. 
While convective storms have a very complicated structure, in which the physics of water in various phases is critical, the essential process can be captured by a one-dimensional model which only takes into account the saturation of air parcels with the associated release of latent heat. Such a model is used routinely by practising weather forecasters in interpreting the likely weather that will result from a given vertical profile of temperature and moisture, see \cite{Met}, chapter 4. It also forms a key component of many theoretical studies of moist convection in the atmosphere; for instance Holt \cite{Holt}, Lock and Norbury \cite{Lock} and Shutts \cite{Shutts1}.  

The model expresses conservation of heat and moisture, together with the change of phase of moisture from vapour to and from liquid and the associated release or absorption of latent heat. This takes place at a moisture concentration which depends on temperature and pressure, and introduces a strong nonlinearity into the problem. Moist convection results from an instability of the vertical profile, which can be triggered by the upward bulk motion of the vertical profile generated by extratropical weather systems. In our model we represent the effect of this by making the saturation moisture content a monotonically decreasing function of time. This allows the model to be solved in a fixed vertical domain, which simplifies the presentation.

The conservation properties are expressed in Lagrangian form, so that a discrete version of the problem can be solved by rearranging fluid parcels as in Bokhove $et\,a.$ \cite{report}, Holt \cite{Holt} and Lock and Norbury \cite{Lock}.
 These conservation properties have been shown to be quite accurate even in more complicated models, e.g. by Shutts and Gray \cite{Shutts2}. 
The rearrangement procedure is designed to reflect the underlying physics of the problem. 

The first attempt to rigorously study this model was made by Dorian Goldman in his Master's thesis \cite{Goldman}, where he considered a particular choice of moisture content and initial data and proved the existence of weak solutions in Lagrangian variables. 
However, there seems be certain gaps in the proofs and the solution was not completely characterized. Besides, his proof does not generalize to more general choice of moisture content and initial data, which can be physically interesting.

The aim of this paper is to show that the discrete problem converges to a limit solution as the number of parcels is increased and to interpret the resulting solution as a weak Lagrangian solution of the governing equations. 
We take a probabilistic approach in this paper, which is completely different from \cite{Goldman} and allows us to deal with more general choice of moisture content function and initial data, which is physically meaningful.

The plan of the paper is as follows. In section 2 we present the problem to be solved and write it as a set of Lagrangian evolution equations. We note that we can only expect a probabilistic solution for general choices of initial data. In section 3, we describe the procedure to construct approximate(discrete) solutions given some deterministic discrete initial data, and show they satisfy the physical constraints. In section 4, we establish necessary estimates about these discrete solutions. In section 5, we come up with the notion of measure valued solutions, and show this coincides with a natural definition of the solution when the initial data and evolution is deterministic. In section 6, we take the limit of the discrete solutions as time/space step size tends to zero and obtain the existence of measure valued solutions.

\section{Definition of the problem}

The problem to be studied, Bokhove {\em et al.} (2016), is
\begin{align}\label{1.1}
&D_t(\theta+q)=0\textrm{ in $(z,t)\in[0,1]\times(0,T)$}.\\
\label{1.2}&
D_t\theta = \left\{ \begin{array}{ll}
0 & \textrm{if $q<Q^{sat}(\theta,z,t)$}\\
\,[D_t(Q^{sat}(\theta,z,t))]^- & \textrm{if $q=Q^{sat}(\theta,z,t)$}\\
\end{array} \right.\textrm{ in $(z,t)\in[0,1]\times(0,T)$.}\\
\label{1.3}&
\frac{\partial u}{\partial z}=0\textrm{ in $(z,t)\in[0,1]\times(0,T)$.}\\
\label{1.4}&
q(z,t)\leq Q^{sat}(\theta(z,t),z,t).
\end{align}
As usual, we denote
$D_t=\partial_t+u\cdot\partial_z
$. 
The equation (\ref{1.3}) above should be interpreted as the divergence free condition with respect to the space variable $z$, namely its flow is measure preserving. The unknown functions are the potential temperature $\theta(z,t)$ and the moisture content $q(z,t)$. Equation \ref{1.4} expresses the physical constraint that the moisture content is limited by the known saturation value $Q^{sat}$ which is time dependent. 
The interesting case, which we study, is where $Q^{sat}$ is monotonically decreasing in time. However, this not needed in the subsequent argument. In the above, $Q^{sat}:\mathbb{R}^3\rightarrow\mathbb{R}$ is a smooth function in its variables, and the following strict monotonicity conditions hold:
\begin{equation}\label{mq}
\partial_{\theta}Q^{sat}>0,\,\,\partial_zQ^{sat}<0,\textrm{ for any $(\theta,z,t)\in\mathbb{R}^3$.}
\end{equation}
Physical solutions to (\ref{1.1})-(\ref{1.4}) should also satisfy the following constraint:
\begin{equation}\label{1.6}
z\longmapsto\theta(z,t)\textrm{ is monotone increasing in $z$ for any $t\in(0,T)$.}
\end{equation}
The reason for imposing such a constraint is that physical solutions should minimize the energy functional  
$$
E(\bar{\theta})=-\int_{[0,1]}z\bar{\theta}(z)dz,
$$
where $\bar{\theta}(z)$ is a bounded Borel function on $[0,1]$, 
among all the possible rearrangements of the particles.
 It is easy to see that a function $\theta(z)$ achieves the minimum of $E$ among all the functions $\bar{\theta}$ with the same distribution 
as $\theta$ iff $\theta(z)$ is monotone increasing. 
Rearranging the parcels is a measure preserving map which does not change the distribution.

It is not hard to see that, in general, the solution does not have good regularity. 
Indeed, if everything is smooth, then let $F:[0,T)\times[0,1]\rightarrow[0,1]$ be the flow map. From (\ref{1.3}),
 we get for each fixed $t\in[0,T)$, $F_t(\cdot)$ preserves $\mathcal{L}^1_{[0,1]}$. If $F$ were continuous, then we can obtain that $F(z)=z$ or $F(z)=1-z$. But $F_0=id$, hence by continuity in $t$, we would be getting $F(t,z)=z$ for all $t$. 
This is not compatible with (\ref{1.4}) and (\ref{1.6}) except in trivial cases. Hence $F$ cannot be continuous. Therefore, the velocity $u$ is defined only as a measure.
 It is not clear how to define weak solutions to (\ref{1.1})-(\ref{1.4}) in a standard way since the set $\{q=Q^{sat}(\theta,z,t)\}$
 is only a general Borel set and may not have nice regularity.

We next define the solution in Lagrangian variables. Let $F_t(z)$ be the flow map, we then get a reformulation of (\ref{1.1})-(\ref{1.4}):
\begin{align}\label{1.7}
&\partial_t(\hat{\theta}+\hat{q})=0\textrm{ in $(z,t)\in[0,1]\times(0,T)$}.\\
\label{1.8}&
\partial_t\hat{\theta} = \left\{ \begin{array}{ll}
0 & \textrm{if $\hat{q}<Q^{sat}(\hat{\theta},F_t(z),t)$}\\
\,[\partial_t(Q^{sat}(\hat{\theta},F_t(z),t))]^- & \textrm{if $\hat{q}=Q^{sat}(\hat{\theta},F_t(z),t)$}\\
\end{array} \right.\textrm{ in $(z,t)\in[0,1]\times(0,T)$.}\\
\label{1.9}&
F_t\#\mathcal{L}^1_{[0,1]}
=\mathcal{L}^1_{[0,1]}\textrm{, for any $t\in[0,T)$.}\\
\label{1.10}&
\hat{q}(z,t)\leq Q^{sat}(\hat{\theta}(z,t),F_t(z),t).
\end{align}
In the above, $\hat{q}$ and $\hat{\theta}$ denote the corresponding variables in Lagrangian coordinates, namely $\hat{q}(t,z)=q(t,F_t(z))$, and $\hat{\theta}(t,z)=\theta(t,F_t(z))$. Here we remark that the equations can be interpreted in a natural way. Indeed, (\ref{1.7}) means $\hat{\theta}+\hat{q}$ is conserved along flow lines. As for equation (\ref{1.8}), notice that the right hand side of (\ref{1.8}) is nonnegative, hence $\partial_t\hat{\theta}$ will be a nonnegative measure. If we can show $t\longmapsto F_t(z)$ has bounded variation, then $\partial_t(Q^{sat}(\hat{\theta},F_t(z),t))$ is a well-defined finite signed measure and its negative part can be defined. Therefore, (\ref{1.8}) can be naturally defined as an equality of measures.

It will be convenient to consider the function $\Theta(w,z,t)$ as the solution $\theta$ to the equation:
\begin{equation}\label{dT}
\theta+Q^{sat}(\theta,z,t)=w
\end{equation}
This function is well defined thanks to the assumed strict monotonicity of $Q^{sat}$ about $\theta$. 
Also we know that $\Theta$ is smooth and satisfies the strict monotonicity
\begin{equation}\label{mT}
\partial_w\Theta>0,\,\,\partial_z\Theta>0.
\end{equation}
This is clear from (\ref{mq}). First we make a simple observation whose proof is elementary.
\begin{lem}\label{1.1l}
Define $\theta^M(t, z)=\theta(t,z)+q(t,z)$. Then $q(t,z)\leq Q^{sat}(\theta(t,z),z,t)$ is equivalent to $\theta(t,z)\geq\Theta(\theta^M(t,z), z,t)$. Equivalence holds true also if we replace the above by a strict inequality.
\end{lem}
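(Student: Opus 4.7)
The plan is to reduce both inequalities to a common monotone substitution. Fix $(z,t)$ throughout and define the one-variable function $f(\theta) := \theta + Q^{sat}(\theta,z,t)$. By the first inequality in (\ref{mq}), namely $\partial_\theta Q^{sat} > 0$, we have $f'(\theta) = 1 + \partial_\theta Q^{sat} > 1 > 0$, so $f$ is a strictly increasing smooth bijection of $\mathbb{R}$ onto its range, and by the defining equation (\ref{dT}) the function $\Theta(\cdot,z,t)$ is precisely $f^{-1}$.

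Now I would simply add $\theta$ to both sides of the moisture constraint. The inequality $q \leq Q^{sat}(\theta,z,t)$ holds if and only if
\begin{equation*}
\theta^M = \theta + q \;\leq\; \theta + Q^{sat}(\theta,z,t) = f(\theta).
\end{equation*}
Applying the strictly increasing function $f^{-1} = \Theta(\cdot,z,t)$ to both sides preserves the inequality, yielding $\Theta(\theta^M,z,t) \leq \theta$, which is the desired equivalent form. Conversely, starting from $\theta \geq \Theta(\theta^M,z,t)$ and applying the strictly increasing $f$ to both sides produces $f(\theta) \geq \theta^M = \theta + q$, which rearranges to $Q^{sat}(\theta,z,t) \geq q$.

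For the strict version, the same chain of implications applies verbatim, using that $f$ (and hence $f^{-1} = \Theta(\cdot,z,t)$) is strictly monotone, so it preserves and reflects strict inequalities as well as non-strict ones. There is no real obstacle here; the only point worth noting is that the argument is purely algebraic and pointwise, so it requires no regularity assumption on $\theta$, $q$, or $\theta^M$ beyond the values being real numbers at the given point $(z,t)$.
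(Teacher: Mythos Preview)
Your proof is correct and is precisely the elementary argument the paper has in mind; the paper omits the proof entirely, simply calling it a ``simple observation whose proof is elementary.'' Your reduction via the strictly increasing map $f(\theta)=\theta+Q^{sat}(\theta,z,t)$ with $\Theta(\cdot,z,t)=f^{-1}$ is the natural way to make this explicit.
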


We assume the inital data satisfies the physical constraint. Namely, we are given $\theta_0(z),q_0(z)\in L^{\infty}([0,1])$, such that $z\longmapsto\theta_0(z)$ is monotone increasing, and $q_0(z)\leq Q^{sat}(\theta_0(z),z,0)$ for $a.e$-$z$. Inspired by the previous discussions, we propose the following definition of weak Lagrangian solutions.

\begin{defn}\label{1.3d}
Let $\theta_t(z),\,q_t(z)\in L^{\infty}([0,T)\times[0,1])\cap C([0,T),L^1([0,1]))$, and $F, F^*:[0,T)\times[0,1]\rightarrow[0,1]$ be \sloppy Borel measure preserving maps such that $F_t(\cdot), F_t^*(\cdot)\in C([0,T);L^1([0,1]))$, and $F_{\cdot}(z)\in L^{\infty}([0,1];BV([0,T))$. Let $\theta_0(z),q_0(z)$ be as in previous paragraph. Denote $\hat{\theta}_t(z)=\theta_t(F_t(z))$ and $\hat{q}_t(z)=q_t(F_t(z))$. Then we say $(q_t,\theta_t,F_t)$ is a weak Lagrangian solution to initial data $\theta_0$,$q_0$ if the following holds:
\begin{equation*}
\begin{split}
&\textrm{(i)$\theta_t\rightarrow\theta_0$,  $q_t\rightarrow q_0$ in $L^1([0,1])$, $F_t\rightarrow id$ in $L^1([0,1])$ as $t\rightarrow0$.}\\
&\textrm{(ii)$z\longmapsto\theta_t(z)$ is monotone increasing for each $t\in[0,T)$.}\\
&\textrm{(iii)For any $t\in[0,T)$, $F_t\circ F_t^*(z)=z$, $F_t^*\circ F_t(z)=z$ for $\mathcal{L}^1$-a.e $z\in[0,1]$.}\\
&\textrm{(iv)For $\mathcal{L}^1-$a.e $z\in[0,1]$,  $\hat{\theta}_t(z)+\hat{q}_t(z)
=\theta_0(z)+q_0(z)$, for $\mathcal{L}^1-a.e$ $t\in(0,T)$}\\
&\textrm{(v)For $\mathcal{L}^1-a.e$ $z\in[0,1]$, $t\longmapsto\hat{\theta}_t(z)\leq
\hat{\theta}_{t^{\prime}}(z)$ for $\mathcal{L}^2-a.e-(t,t^{\prime})$ with $t<t^{\prime}$}\\
&\textrm{(vi)$\partial_t\hat{\theta}(\cdot,z)=[\partial_t(Q^{sat}(\hat{\theta}(\cdot,z),F_{\cdot}(z),\cdot)]^-\lfloor E_z$, where $E_z=\{t\in(0,T):\hat{q}_t^*(z)=Q^{sat}(\hat{\theta}^*_t(z),F_t(z),t)\}$,}\\
&\textrm{and $\hat{q}^*_t(z)$, $\hat{\theta}_t^*(z)$ is the monotone, left continuous version of $\hat{q}_t(z)$, $\hat{\theta}_t(z)$. chosen according to}\\
&\textrm{ Remark \ref{1.4r}.}
\end{split}
\end{equation*}
\end{defn}
\begin{rem}
Let $f:(0,T)\rightarrow\mathbf{R}$ be Borel measurable, such that $f=\tilde{f}$ for $\mathcal{L}^1-a.e$ $t\in(0,T)$, with $\tilde{f}\in BV(0,T)$, then $\partial_tf$ is a finite signed measure, defined by
$$
\int_0^Tf(t)\partial_t\zeta(t)dt=-\int_0^T\zeta(t)(\partial_tf)(dt),\,\forall\zeta\in C_c^1((0,T)).
$$
If we choose $\tilde{f}$ such that it is left continuous, then:
$$
\partial_tf([a,b))=\tilde{f}(b)-\tilde{f}(a),\textrm{ for any $[a,b)\subset(0,T)$.}
$$
\end{rem}
\begin{rem}\label{1.4r}
Let $f:(0,T)\rightarrow\mathbb{R}$ be Borel measurable such that for $\mathcal{L}^2-a.e$ $(t,t^{\prime})\in(0,T)^2$ with $t<t^{\prime}$, 
we have $f(t)\leq f(t^{\prime})$, then there exists a unique function $f^*:(0,T)\rightarrow\mathbb{R}$, monotone increasing, continuous from the left, such that $f=f^*$ for $\mathcal{L}^1-$a.e $t\in(0,T)$.
\end{rem}
It turns out that above definition of weak Lagrangian solutions is still too strong, and one cannot expect the existence of solutions in the sense defined above except for some special choice of the function $Q^{sat}$ and the initial data. 

One difficulty with the system (\ref{1.1})-(\ref{1.4}) is that we do not have much regularity in space. The only regularity in space comes from the monotonicity of $\theta$, and in general, 
no regularity in space for $q$, as well as the flow maps $F_t$, $F_t^*$. This means we lack the necessary compactness to get a function $q_t(z)$, or a measure preserving flow map $F,F^*$ in the limit.

The evolution of $\theta$ and $q$ is highly unstable under small perturbations of the initial data, which can be seen from the construction of the discrete problem. This suggests the use of probabilistic description of the solution. 
Under this description, heuristially for each time $t$, we have a certain probability distribution for $\{\theta,q(z)\}_{z\in[0,1)}$, 
and we make a random choice of $\theta$, which is a monotone increasing function on $[0,1)$, also make a random choice for $q(z)$ for each $z$, 
according to this probability distribution and then evolve.
 This determines the probability distribution for $\{\theta,q(z)\}_{z\in[0,1)}$ at later times. In this spirit, 
we need to prescribe some probability distribution as initial data.

On the other hand we need the correct equation to be satisfied (point (vi) of Definition \ref{1.3d}), this suggests considering some "path-spaces" which describes all the possible paths of some parcel. 
Inspired by the probabilistic approach of transport equation, 
we wish to obtain the solution as a measure in some path space, 
and the correct probability distribution is obtained by projecting to each $t$.

We will make the above heuristic discussions rigorous in section 5.

\section{Solution of the discrete problem}

In this section, we construct discrete solutions following the method of Bokhove $et\,a.$ \cite{report} and do estimates about them. 

The discrete procedure is designed to reflect the underlying physics of the problem, as expressed in Met Office \cite{Met}, chapter 4. It is based on a  representation of the fluid as discrete parcels, so that $\hat\theta$ and $\hat q$ are piecewise constant. The initial values satisfy the physical constraints (\ref{1.10}) and (\ref{1.6}) at $t=0$. We define $Q^{sat}$ to be a monotonically decreasing function of time and discretise the time variation. Thus after some time interval the constraint (\ref{1.10}) will be violated. 

The Lagrangian form of the equations (\ref{1.7})-(\ref{1.10}) is solved by representing the flow map $F_t$ as a rearrangement of the fluid parcels. The evolution of $\hat\theta$ and $\hat q$ on each parcel is computed using (\ref{1.7}) and (\ref{1.8}). If (\ref{1.10}) is violated for any parcel, then (\ref{1.8}) is used to update $\hat\theta$ and set $q$ equal to $Q^{sat}$. The update to $\hat\theta$ may result in the constraint (\ref{1.6}) being violated, in which case the parcels have to be rearranged to restore the constraint. $\hat\theta^M=\hat\theta+\hat q$ is conserved for each parcel under the rearrangement as required by (\ref{1.7}), and $\hat\theta\ge\Theta(\hat\theta^M,z,t)$ at the final positions beacuse of Lemma \ref{1.1l}. 

As found by Bokhove $et\,a.$\cite{report}, finding this rearrangement is non-trivial because of the dependence of $Q^{sat}$ on $\theta$ and $z$, and because (\ref{1.10}) may be violated on several parcels simultaneously. We call these 'wet' parcels. In this case there may be many ways to satisfy the constraints. The physics of the problem requires that the final position of the wet parcel with the largest $\hat\theta^M$ is determined first. This is done by moving it upwards, thus increasing $\hat\theta$, until it encounters a larger value of $\hat\theta$ at some $z=z_t$. We refer to this as the parcel "beating" all other parcels with $z<z_t$. All overtaken parcels have to move down to compensate for the upward displacement. Extreme care is required in showing that this procedure has a well defined limit as the timestep tends to zero.

We now define this procedure precisely. Denote $z_i=\frac{i}{n}$, $J_i=[\frac{i-1}{n},\frac{i}{n})$, for any integer $1\leq i\leq n$. Let $\{\theta_j^n\}_{j=1}^n$, and $\{q_j^n\}_{j=1}^n$ be given, such that $\theta_j^n\leq\theta_{j+1}^n$, and $q_j^n\leq Q^{sat}(\theta_j^n,z_j,0)$,
 for any $1\leq j\leq n$. This means that a discrete version of (\ref{1.6}) and (\ref{1.10}) is satisfied. It follows from Lemma \ref{1.1l} that $\theta_j^n\geq
\Theta(\theta_j^n+q_j^n,z_j,0)$.
Let $\delta t=\frac{1}{Cn}$ for some large constant $C>0$ to be determined later on. This will be chosen so that it depends only on the function $Q^{sat}$, $T$, and the initial data. 
Define the wet set at time step 0 to be $W_{n}=\{1\leq j\leq n:\theta_j^n<\Theta(\theta_j^n+q_j^n,z_j,\delta t)\}$. We will also denote $\theta_j^{M,n}=\theta_j^n+q_j^n$.
 First we decide which parcels move to $z_n$. Define 
\begin{equation*}
\begin{split}
W_{n}^{\prime}=&\{j_0\in W_{n}:\textrm{for any $j>j_0$ with $j\notin W_{n}$,  $\theta_j^n<
\Theta(\theta_{j_0}^{M,n},z_j,\delta t)$, and if $j\in W_{n}$,}\\
&\theta_{j_0}^{M,n}>\theta_j^{M,n}\}.
\end{split}
\end{equation*}
Here we make the convention that $n\in W_n^{\prime}$ if and only if $n\in W_n$.
The set $W_n^{\prime}$ is exactly the set of parcels which are "wet" and can beat all other parcels above up to $n$. We will sometimes call them "eligible".
First assume $W_{n}^{\prime}\neq\emptyset$. Let $j_0\in W_{n}^{\prime}$ be the parcel with the largest $\theta_{j}^{M,n}$ among $W_n^{\prime}$(if there are more than one such parcels, simply choose $j_0$ to be largest possible), define the first rearrangement 
$$
\sigma_n(k) = \left\{ \begin{array}{ll}
k & \textrm{if $1\leq k<j_0$;}\\
\,n & \textrm{if $k=j_0$;}\\
\,k-1& \textrm{if $j_0<k\leq n$.}
\end{array} \right.
 $$
To explain this in English, a parcel can jump to $z_n$ only if it is wet and has the largest $\theta^{M,n}$ among all "eligible" parcels.

We also update $\theta_j^n$ after the first rearrangement in the following way:
$$
\theta_j^{n,n-1} = \left\{ \begin{array}{ll}
\theta_{\sigma_n^{-1}(j)}^n & \textrm{if $j\neq n$;}\\
\,\Theta(\theta_{j_0}^{M,n},z_j,\delta t) & \textrm{if $j=n$.}\\
\end{array} \right.
$$
That is, we update the $\theta$ of parcels which jumped according to its final position, and leave the $\theta$ of other parcels unchanged. In Lagrangian coordinates,
 define $\hat{\theta}_j^{n,n-1}=
\theta_{\sigma_n(j)}^{n,n-1}$, $\hat{q}_j^{n,n-1}=
\theta_j^n+q_j^n-
\hat{\theta}_j^{n,n-1}$.
This is consistent with (\ref{1.7}).
Define the new wet set 
\begin{equation}\label{2.1}
\begin{split}
&W_{n-1}=\{1\leq j\leq n:\theta_j^{n,n-1}<\Theta(\theta_{\sigma_n^{-1}(j)}^{M,n},z_j,\delta t)\},\\
&W_{n-1}^{\prime}=\{j_0\in W_{n-1}:\textrm{for any $j_0<j\leq n-1$ with $j\notin W_{n-1}$, $\theta_j^{n,n-1}<\Theta(\theta_{\sigma_n^{-1}(j_0)}^{M,n},z_j,\delta t)$.}\\
&\textrm{ or if $j\in W_{n-1}$, $\theta_{\sigma_n^{-1}(j_0)}^{M,n}>\theta_{\sigma_n^{-1}(j)}^{M,n}$.}\}.
\end{split}
\end{equation}
Notice that $W_{n-1}\subset\{1,2,\cdots,n-1\}$.

If $W_{n}^{\prime}=\emptyset$, then simply take $\sigma_n=id$, and take $\theta_j^{n,n-1}=\theta_j^n$, $q_j^{n,n-1}=q_j^n$. Then we have $W_{n-1}=W_{n}$.

Next we repeat the above procedure to $\{\theta_j^{n,n-1}\}_{j=1}^{n-1}$, $\{q_j^{n,n-1}\}_{j=1}^{n-1}$, and the new wet set defined by (\ref{2.1}). Let $\sigma_{n-1}$ be the resulting rearrangement of the first $n-1$ parcels. Let $\sigma_{n-1}(n)=n$, so that it becomes the rearrangement for $n$ parcels.

In general, let $\sigma_k$ be the rearrangement when we decide which parcel moves to $z_k$ with $\sigma_k(l)=l$ for $l>k$. Denote $\beta_k=\sigma_{k+1}\circ\cdots\circ\sigma_n$, with $\beta_n=id$. 
Let $\{\theta_j^{n,k}\}_{j=1}^n$, $\{q_j^{n,k}\}_{j=1}^n$ be the updated $\theta$ and $q$ after $\sigma_{k+1}$. We also denote $\theta_j^{n}=\theta_j^{n,n}$, and $q_j^n=q_j^{n,n}$.
The wet set at this stage is given by
\begin{equation}\label{2.3n}
\begin{split}
&W_{k}=\{1\leq j\leq n:\theta_j^{n,k}<\Theta
(\theta^{M,n}_{\beta_k^{-1}(j)},z_j,\delta t)\}\\
&W_{k}^{\prime}=\{j_0\in W_{k}:\textrm{ for any $j_0<j\leq k$ with $j\notin W_{k}$, $\theta_j^{n,k}<\Theta(\theta^{M,n}_{\beta_k^{-1}(j_0)},z_j,\delta t)$,}\\
&\textrm{or if $j\in W_{k}$, $\theta^{M,n}_{\beta_k^{-1}(j_0)}>\theta^{M,n}_{\beta_k^{-1}(j)}$.}\}.
\end{split}
\end{equation}
As before, we make the convention that if $j_0\geq k$, then $j_0\in W_k^{\prime}$ if and only if $j_0\in W_k$. The sets $W_k$, $W_k^{\prime}$ determines the evolution when we decide which parcel moves to $z_{k-1}$.
The following inductive formula holds when $W_{k+1}^{\prime}\neq\emptyset$. 
Let $j_*\in W_{k+1}^{\prime}$ be such that $\theta^{M,n}_{\beta_{k+1}^{-1}(j_*)}\geq\theta^{M,n}_{\beta_{k+1}^{-1}(j)}$ for any $j\in W_{k+1}^{\prime}$, then we move this parcel to $z_{k+1}$, namely,
\begin{equation}\label{2.2nn}
\sigma_{k+1}(j) = \left\{ \begin{array}{ll}
j & \textrm{if $j<j_*$ or $j>k+1$;}\\
k+1 & \textrm{if $j=j_*$;}\\
j-1 &\textrm{if $j_*<j \leq k+1$.}
\end{array} \right.
\end{equation}
We update $\theta_j^n$ accordingly:
\begin{equation}\label{2.2n}
\theta_j^{n,k} = \left\{ \begin{array}{ll}
\theta_{\sigma_{k+1}^{-1}(j)}^{n,k+1} & \textrm{if $j\neq k+1$;}\\
\,\Theta(\theta_{\beta_k^{-1}(k+1)}^{M,n},z_{k+1},\delta t) & \textrm{if $j=k+1$.}\\
\end{array} \right.
\end{equation}
If $W_{k+1}^{\prime}=\emptyset$, then simply put $\sigma_{k+1}=id$ and $\theta_j^{n,k}=\theta_j^{n,k+1}$.

Let $\hat{\theta}_j^{n,k}=
\theta_{\beta_k(j)}^{n,k}$. Define $q_j^{n,k}=q_{\sigma_{k+1}^{-1}(j)}^{n,k+1}+\theta_{\sigma_{k+1}^{-1}(j)}^{n,k+1}-\theta_j^{n,k}$.
 $\hat{q}_j^{n,k}=q_{\beta_k(j)}^{n,k}$. 
We observe some useful properties of above rearrangement algorithm:
\begin{lem}\label{3.1nl}
For each index $j\in\{1,2,\cdots,n\}$, one of the following must hold:\\
(i)There exists a unique $k_1\in\{1,2,\cdots,n\}$, such that $\beta_{k_1-1}(j)=\sigma_{k_1}(\beta_{k_1}(j))=k_1>\beta_{k_1}(j)$. Besides, for any $k\geq k_1$, $\beta_k(j)\leq\beta_{k+1}(j)$, and any $k\leq k_1-1$, $\beta_k(j)=k_1$.\\
(ii)$\beta_k(j)\leq \beta_{k+1}(j)$ for any $0\leq k\leq n-1$.\\
Morover, if for some $k_2$, $\beta_{k_2}(j)\leq k_2$ and $\notin W_{k_2}$, then the second alternative must hold.
On the other hand, if for some $j_1,\,j_2$, and some $k_3$, 
it holds $\beta_{k_3}(j_1)<\beta_{k_3}(j_2)$, but $\beta_{k_3-1}(j_1)>\beta_{k_3-1}(j_2)$, then the first alternative above holds for $j_1$ with $k_1=k_3$.
\end{lem}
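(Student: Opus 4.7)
The plan is to read the dichotomy directly from the explicit formula (\ref{2.2nn}) and the identity $\beta_{k-1}=\sigma_k\circ\beta_k$. At each step $k$ with $W_k^{\prime}\neq\emptyset$ let $j_*^{(k)}$ denote the index selected in $W_k^{\prime}$ (the one sent up to position $k$), and set $\sigma_k=\mathrm{id}$ when $W_k^{\prime}=\emptyset$. Then for every Lagrangian label $j$, the update $\beta_k(j)\mapsto\beta_{k-1}(j)$ falls into one of three elementary cases read off from (\ref{2.2nn}): (a) \emph{fixed}, $\beta_{k-1}(j)=\beta_k(j)$, when $\beta_k(j)<j_*^{(k)}$ or $\beta_k(j)>k$; (b) \emph{shift down}, $\beta_{k-1}(j)=\beta_k(j)-1$, when $j_*^{(k)}<\beta_k(j)\leq k$; (c) \emph{jump}, $\beta_{k-1}(j)=k>\beta_k(j)$, when $\beta_k(j)=j_*^{(k)}$ and $j_*^{(k)}<k$.

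The dichotomy (i)/(ii) follows immediately. If case (c) never occurs for $j$, then every update satisfies $\beta_{k-1}(j)\leq\beta_k(j)$, giving alternative (ii). If case (c) occurs at least once, let $k_1$ be its first occurrence; then $\beta_{k_1-1}(j)=k_1>\beta_{k_1}(j)$. For every subsequent step $k\leq k_1-1$ the current position $k_1$ exceeds $k$, which is case (a), so $\beta_k(j)$ is frozen at $k_1$; in particular no second jump can occur, and $k_1$ is unique. For the transitions prior to the jump, i.e., at step $k+1$ with $k\geq k_1$, minimality of $k_1$ excludes case (c), leaving (a) or (b), both of which give $\beta_k(j)\leq\beta_{k+1}(j)$. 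This is alternative (i).

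For the ``moreover'' claim, assume $\beta_{k_2}(j)\leq k_2$ and $\beta_{k_2}(j)\notin W_{k_2}$. First, no jump can have been processed before $\beta_{k_2}$: a jump at some $k^{\prime}>k_2$ would by the frozen-after-jump property force $\beta_{k_2}(j)=k^{\prime}>k_2$, contradicting the hypothesis. Hence the carried potential temperature $\hat\theta_j^{n,k_2}$ still equals the initial $\theta_j^n$, and the hypothesis reads $\theta_j^n\geq \Theta\bigl(\theta^{M,n}_j,z_{\beta_{k_2}(j)},\delta t\bigr)$. Now suppose for contradiction a jump does occur later, at some first step $k_1\leq k_2$; then cases (a), (b) apply throughout $(k_1,k_2]$, yielding the pre-jump monotonicity $\beta_{k_1}(j)\leq\beta_{k_2}(j)$. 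The jump at $k_1$ requires $\beta_{k_1}(j)\in W_{k_1}$, i.e., $\theta_j^n<\Theta(\theta^{M,n}_j,z_{\beta_{k_1}(j)},\delta t)$; but $z_{\beta_{k_1}(j)}\leq z_{\beta_{k_2}(j)}$ and the monotonicity $\partial_z\Theta>0$ from (\ref{mT}) give $\Theta(\theta^{M,n}_j,z_{\beta_{k_1}(j)},\delta t)\leq \Theta(\theta^{M,n}_j,z_{\beta_{k_2}(j)},\delta t)\leq\theta_j^n$, a contradiction. Thus the parcel never jumps and alternative (ii) holds.

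For the last assertion, suppose $\beta_{k_3}(j_1)<\beta_{k_3}(j_2)$ but $\beta_{k_3-1}(j_1)>\beta_{k_3-1}(j_2)$. Running through the nine combinations of (a),(b),(c) for $j_1$ and $j_2$ at step $k_3$, every pairing in which $j_1$ undergoes (a) or (b) leaves the strict inequality intact, since each position either stays or decreases by exactly one and the initial gap absorbs such a move. The only way to invert the order is for $j_1$ to undergo case (c) at step $k_3$, giving $\beta_{k_3-1}(j_1)=k_3>\beta_{k_3}(j_1)$, which is precisely the defining event of alternative (i) with $k_1=k_3$. The main obstacle throughout the lemma is not analytic but combinatorial bookkeeping: keeping the piecewise definition of $\sigma_k$ under control, handling the conventions when $W_k^{\prime}$ is empty or its maximiser has index $\geq k$, and carefully distinguishing ``jump already processed'' from ``jump still to come'' in the decreasing-$k$ processing order.
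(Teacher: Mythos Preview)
Your argument is essentially correct and follows the same combinatorial approach as the paper: read the possible moves off the explicit formula (\ref{2.2nn}), and use that a jump freezes the parcel above all subsequent active positions. Two small remarks are in order.

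First, a wording slip: when you write ``minimality of $k_1$ excludes case (c)'' for the pre-jump transitions, you mean the \emph{choice of $k_1$ as the first occurrence} (equivalently, maximality of $k_1$ among jump steps, since processing runs from $k=n$ down). Also, your trichotomy (a),(b),(c) does not cover the borderline case $\beta_k(j)=j_*^{(k)}=k$, where the parcel is selected but already sits at the top active slot; here the position is fixed while $\hat\theta$ is updated. This edge case is harmless for the dichotomy (position-wise it behaves like (a)) and, if it occurred in the ``moreover'' argument, would again freeze the parcel above $k_1$ and preclude a later jump---so your conclusion survives, but the written justification ``$\hat\theta_j^{n,k_1}=\theta_j^n$ since no jump'' is not quite airtight without noting this.

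Second, for the ``moreover'' clause your route differs slightly from the paper's. The paper iterates: from $\beta_{k_2}(j)\notin W_{k_2}$ it deduces $\beta_{k_2-1}(j)\notin W_{k_2-1}$ (same $\theta$, smaller $z$, $\partial_z\Theta>0$), and repeats, so the parcel stays dry forever and can never be selected. You instead carry the initial $\theta_j^n$ all the way to the hypothetical jump step and compare once. Both are short and valid; the paper's step-by-step version automatically rules out the self-selection edge case, while yours needs the extra sentence noted above.
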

This lemma says that for any given parcel, either it experiences no lifts at all among the $\sigma_k$`s, or there is a unique $\sigma_k$ which lifts this parcel and it stays there in the latter rearrangements of the same time step. If a parcel becomes dry in a certain time step, then it will stay dry in the latter arrangement. The only way the order of two parcels can change is that the lower parcel experience a jump.
\begin{proof}
Fix an index $j$. Suppose there exists some $0\leq k_1\leq n-1$, for which $\beta_{k_1-1}(j)>\beta_{k_1}(j)$.
 From the definition of the $\sigma_k$ given in (\ref{2.2nn}), we see that if $\sigma_k(j)>j$ for some $j$, $k$, it must hold that $\sigma_k(j)=k$.
Hence from $\beta_{k_1-1}(j)=\sigma_{k_1}(\beta_{k_1}(j))>\beta_{k_1}(j)$, we see $\sigma_{k_1}(\beta_{k_1}(j))=k_1$. If for some $k\geq k_1$,
 $\beta_k(j)>\beta_{k+1}(j)$, then the same argument shows $\beta_k(j)=\sigma_{k+1}(\beta_{k+1}(j))=k+1$. 
Now for any $k'\leq k$, we have $\sigma_{k'}(\beta_k(j))=\beta_k(j)$, a contradiction. 
This proves for $k\geq k_1$, $\beta_k(j)\leq \beta_{k+1}(j)$. The case for $k\leq k_1-1$ follows directly from definition of $\sigma_k$. 

To see the moreover part,  observe that for any $k\geq k_2$, we must have $\beta_k(j)\leq\beta_{k+1}(j)$.
If not, by the argument given in the first part, we can then conclude $\beta_{k^{\prime}}(j)=k+1$, for any $k^{\prime}\leq k$. In particular, this means $\beta_{k_2}(j)=k+1>k_2$, a contradiction. Since $\beta_{k_2}(j)\notin W_{k_2}$, from (\ref{2.2nn}), we see $\beta_{k_2-1}(j)=\sigma_{k_2}(\beta_{k_2}(j))\leq\beta_{k_2}(j)$.
This means that
$$
\theta^{n,k_2-1}_{\beta_{k_2-1}(j)}=\theta_{\beta_{k_2}(j)}^{n,k_2}\geq\Theta
(\theta_j^{M,n},z_{\beta_{k_2}(j)},\delta t)\geq\Theta(\theta_j^{M,n},z_{\beta_{k_2-1}(j)},\delta t).
$$
It follows that $\beta_{k_2-1}(j)\notin W_{k_2-1}$. Hence one conclude $\beta_{k_2-2}(j)=\sigma_{k_2-1}(\beta_{k_2-1}(j))\leq\beta_{k_2-1}(j)$. Same argument as above applies and shows $\beta_{k_2-2}(j)\notin W_{k_2-2}$. One can apply the same argument and shows $\beta_k(j)$ is monotone decreasing in $k$.
The "on the other hand" part follows directly from the definirtion of $\sigma_k$ given in (\ref{2.2nn}).
\end{proof}
We want to show above define algorithm preserves a discrete version of the physical constraint:
\begin{lem}\label{2.1l}
(i) $\theta_j^{n,k}\leq\theta_{j+1}^{n,k}$ for any $1\leq j\leq n-1$;\\
(ii)$\hat{q}_j^{n,k}+\hat{\theta}_j^{n,k}=q_j^n+\theta_j^n.$\\
(iii)$\hat{\theta}_j^{n,k}\geq\hat{\theta}_j^{n,k+1}$.\\
(iv)$q_j^{n,k}\leq Q^{sat}(\theta_j^{n,k},z_j,0)$, for $1\leq j\leq k$, and $q_j^{n,k}\leq Q^{sat}(\theta_n^{n,k},z_j,\delta t)$, for $k+1\leq j\leq n$.
\end{lem}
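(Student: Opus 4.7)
The plan is to prove (i)--(iv) simultaneously by reverse induction on $k$ from $k=n$ down to $k=0$. The base case $k=n$ is immediate, since $\beta_n=\mathrm{id}$ reduces (i) and (iv) to the standing hypotheses on the initial data, makes (ii) trivial, and makes (iii) vacuous. For the inductive step, I first dispatch the case $W_{k+1}^{\prime}=\emptyset$: here $\sigma_{k+1}=\mathrm{id}$ and $\theta^{n,k}=\theta^{n,k+1}$, $q^{n,k}=q^{n,k+1}$, so everything transfers automatically; the convention $k+1\in W_{k+1}^{\prime}\Leftrightarrow k+1\in W_{k+1}$ forces $k+1\notin W_{k+1}$ and thereby gives (iv) at $j=k+1$. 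The substantive case is $W_{k+1}^{\prime}\neq\emptyset$, where I write $j_*$ for the chosen position, $\ell_*=\beta_{k+1}^{-1}(j_*)$ for its Lagrangian index, and $B:=\theta^{M,n}_{\ell_*}$.

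Parts (ii), (iii), and (iv) are then essentially bookkeeping from the inductive hypothesis and the formulas (\ref{2.2nn})--(\ref{2.2n}) together with the definition of $q_j^{n,k}$. For (ii) I expand $\hat{\theta}_j^{n,k}+\hat{q}_j^{n,k}$ using $\beta_k=\sigma_{k+1}\circ\beta_{k+1}$ and telescope to $\hat{\theta}_j^{n,k+1}+\hat{q}_j^{n,k+1}$. For (iii), the parcel $j$ either is $\ell_*$, in which case $\hat{\theta}_j^{n,k}=\Theta(B,z_{k+1},\delta t)$ strictly exceeds $\hat{\theta}_j^{n,k+1}=\theta_{j_*}^{n,k+1}$ by the wet condition $j_*\in W_{k+1}$, or it is merely shifted with its $\theta$-value carried along, so $\hat{\theta}_j$ is unchanged. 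For (iv), the lifted position $k+1$ obtains $q_{k+1}^{n,k}=Q^{sat}(\theta_{k+1}^{n,k},z_{k+1},\delta t)$ directly from the defining equation of $\Theta$; positions $j>k+1$ are untouched by $\sigma_{k+1}$; and for $j\le k$ the parcel currently there was never lifted during stages $k+1,\ldots,n$, so its values are still the originals $(\theta_\ell^n,q_\ell^n)$ with $\ell=\beta_k^{-1}(j)\ge j$, and $\partial_zQ^{sat}<0$ promotes the initial saturation bound at $z_\ell$ to the required bound at $z_j$.

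The main obstacle is statement (i). After accounting for the cyclic-shift structure of $\sigma_{k+1}$, inductive monotonicity handles every comparison $\theta_j^{n,k}\le\theta_{j+1}^{n,k}$ with $j\notin\{k,k+1\}$. The inequality $\theta_k^{n,k}\le\theta_{k+1}^{n,k}$ is routine: $\theta_k^{n,k}$ equals either $\theta_k^{n,k+1}$ or $\theta_{k+1}^{n,k+1}$ according to whether $j_*=k+1$, and in either case it is strictly less than $\Theta(B,z_{k+1},\delta t)=\theta_{k+1}^{n,k}$ because $j_*$ was chosen to maximize $\theta^{M,n}$ over $W_{k+1}^{\prime}$ and the eligibility clauses rule out a violation at position $k+1$. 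The delicate inequality is $\theta_{k+1}^{n,k}\le\theta_{k+2}^{n,k}=\theta_{k+2}^{n,k+1}$, since the eligibility condition defining $W_{k+1}^{\prime}$ only controls positions up to $k+1$.

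To establish $\Theta(B,z_{k+1},\delta t)\le \theta_{k+2}^{n,k+1}$, I split on whether a lift occurred at stage $k+2$. If one did, write $\theta_{k+2}^{n,k+1}=\Theta(A,z_{k+2},\delta t)$, where $A=\max\{\theta^{M,n}_{\beta_{k+2}^{-1}(j)}:j\in W_{k+2}^{\prime}\}$; since $\partial_z\Theta>0$, it suffices to show $B\le A$. I would trace $\ell_*$ back to stage $k+2$ and run a chain-of-blockers argument on the two clauses in the definition of $W_{k+2}^{\prime}$: if $\ell_*\notin W_{k+2}$, propagating the dry status through $\sigma_{k+2}$ yields $\theta_{j_*}^{n,k+1}\ge\Theta(B,z_{j_*},\delta t)$, contradicting $j_*\in W_{k+1}$; if $\ell_*\in W_{k+2}\setminus W_{k+2}^{\prime}$, the blocker is either dry (same contradiction, unless it sits at position $k+2$, in which case one obtains directly $\theta_{k+2}^{n,k+1}\ge\Theta(B,z_{k+2},\delta t)$) or wet with strictly larger $\theta^{M,n}$, whereupon I iterate. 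Because the Eulerian position of the blocker strictly increases and is capped at $k+2$, the chain terminates in $W_{k+2}^{\prime}$, giving $B\le A$. The no-lift case at stage $k+2$ is analogous but simpler, since $\sigma_{k+2}=\mathrm{id}$ forces the only possible blocker to sit at $k+2$ itself with the needed dry bound.
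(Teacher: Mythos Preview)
Your proposal is correct and follows essentially the same reverse-induction scheme as the paper's proof. The only differences are organizational: for (iv) at $j\le k$ you trace the parcel back to the initial data and invoke $\partial_z Q^{sat}<0$, whereas the paper uses one step of the inductive hypothesis at level $k+1$; and for the hard inequality $\Theta(B,z_{k+1},\delta t)\le\theta_{k+2}^{n,k+1}$ in (i) you run an explicit chain-of-blockers iteration, whereas the paper first proves a uniform claim (your ``same contradiction'' made into a lemma) that no dry position in $(j_{**},k+1]$ can block $j_{**}$, and then passes directly to the maximal wet blocker $j_1$ rather than iterating---the two routes are equivalent.
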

\begin{proof}
First we prove the point (ii). From our definition, we know that $q_j^{n,k}+\theta_j^{n,k}=q_{\sigma_{k+1}^{-1}(j)}^{n,k+1}+\theta_{\sigma_{k+1}^{-1}(j)}^{n,k+1}$. From this it immediately follows $\hat{\theta}_j^{n,k}+\hat{q}_j^{n,k}
=\hat{\theta}_j^{n,k+1}
+\hat{q}_j^{n,k+1}$.

We prove the other three statements by induction on $n$. First observe that statements (i)-(iv) are true for $k=n$.(point (iii) is empty when $k=n$.) Now assume these are true for $k+1$ and above with $1\leq k+1\leq n$, wish to prove these for $k$.

Now we prove point (iii) for $k$, assuming $W_{k+1}^{\prime}\neq\emptyset$. One can see point (iii) is equivalent to $\theta_{\sigma_{k+1}(j)}^{n,k}\geq\theta_j^{n,k+1}$, by our definition of $\hat{\theta}_j^{n,k}$. If $\sigma_{k+1}(j)\neq k+1$,
 then one has $\theta_{\sigma_{k+1}(j)}^{n,k}=\theta_j^{n,k+1}$ from (\ref{2.2n}). 
Now if $\sigma_{k+1}(j_*)=k+1$, then
$$
\theta_{k+1}^{n,k}=\Theta
(\theta^{M,n}_{\beta_{k+1}^{-1}(j_*)},z_{k+1},\delta t)\geq\theta_{k+1}^{n,k+1}\geq\theta_{j_*}^{n,k+1}.
$$
The first inequality above used the fact that $j_*\in W_{k+1}^{\prime}$, hence it must "beat" the parcel originally at $z_{k+1}$. The second inequality used the induction hypothesis that point (i) holds with $k+1$. If $W_{k+1}^{\prime}=\emptyset$, then we simply have $\sigma_{k+1}=id$, and $\theta_j^{n,k}=\theta_j^{n,k+1}$, so there is nothing to prove.

Then we prove point (i). We only consider the case when $W_{k+1}^{\prime}\neq\emptyset$, otherwise nothing is changed by $\sigma_{k+1}$ and the proof is trivial. Let $j_*\leq k+1$ be such that $\sigma_{k+1}(j_*)=k+1$.

The only nontrivial cases to check is when $j=k$ and $j=k+1$, the rest of the cases will follow from (\ref{2.2nn}), (\ref{2.2n}) and the induction hypothesis that (i) holds for $k+1$. So it boils down to prove \begin{equation}\label{3.5n}
\theta_{k+1}^{n,k+1}\leq\Theta
(\theta_{\beta_{k+1}^{-1}(j_*)}^{M,n},z_{k+1},\delta t)\leq\theta_{k+2}^{n,k+1}.
\end{equation} 
The first part of the inequality follows from that $j_*\in W_{k+1}^{\prime}$, that is,
 it needs to "beat" the parcel originally at $z_{k+1}$ in order to rise to $z_{k+1}$. To be precise, suppose that $k+1\notin W_{k+1}$, then we know from $j_*\in W_{k+1}^{\prime}$ and the formula for $W_{k}^{\prime}$ in (\ref{2.3n}) that $\theta_{k+1}^{k+1}<
\Theta(\theta^{M,n}_{\beta_{k+1}^{-1}(j_*)},z_{k+1},\delta t)$. 
This is exactly what we want. Now if $k+1\in W_{k+1}$,
 then again from $j_*\in W_{k+1}$ one concludes that $\theta^{M,n}_{\beta_{k+1}^{-1}(j_*)}\geq\theta_{\beta_{k+1}^{-1}(k+1)}^{M,n}$. Hence
$$
\Theta(\theta_{\beta_{k+1}^{-1}(j_*)}^{M,n},z_{k+1},\delta t)\geq\Theta(\theta^{M,n}_{\beta_{k+1}^{-1}(k+1)},z_{k+1},\delta t)\geq\theta_{k+1}^{n,k+1}.
$$
The first inequality used the monotonicity of $\Theta$ with respect to $\theta^M$, the second inequality used that $k+1\in W_{k+1}$.
This proves the first part of (\ref{3.5n}).

The reason why the second part of the inequality holds is that if it were not true, then $j_*$ would have risen to $z_{k+2}$ instead of $z_{k+1}$ in the rearrangement $\sigma_{k+2}$. 
To make this precise, let $\sigma_{k+2}(j_{**})=j_*$, then we must have $j_{**}\geq j_*$. If not, we will have $\sigma_{k+2}(j_{**})=k+2=j_*$, not possible. Since $j_*\in W_{k+1}$, we have $$
\theta_{j_{**}}^{n,k+2}=\theta_{j_*}^{n,k+1}<
\Theta
(\theta^{M,n}_{\beta_{k+1}^{-1}(j_*)},z_{j_*},\delta t)\leq\Theta
(\theta^{M,n}_{\beta_{k+2}^{-1}(j_{**})},z_{j_{**}},\delta t).
$$
This implies $j_{**}\in W_{k+2}$. Now we claim that for any $j$ with $j_{**}<j\leq k+1$ and $j\notin W_{k+2}$, then $\theta_j^{n,k+2}<\Theta(\theta^{M,n}_{\beta_{k+2}^{-1}(j_{**})},z_j,\delta t)$.
Indeed, since $j\notin W_{k+2}$, we have $\sigma_{k+2}(j)\leq j$, $\sigma_{k+2}(j)\notin W_{k+1}$, and $\theta_j^{n,k+2}=\theta^{n,k+1}_{\sigma_{k+2}(j)}$. If the claim is not true, then

$$
\Theta
(\theta^{M,n}_{\beta_{k+1}^{-1}(j_{*})},z_{\sigma_{k+2}(j)},\delta t)\leq\Theta
(\theta^{M,n}_{\beta_{k+1}^{-1}(j_{*})},z_j,\delta t)\leq\theta_{j}^{n,k+2}=\theta_{\sigma_{k+2}(j)}^{n,k+1}.
$$
Notice $\sigma_{k+2}(j)\leq k+1$, also $\sigma_{k+2}(j)>\sigma_{k+2}(j_{**})=j_*$, this contradicts $j_*\in W_{k+1}^{\prime}$.

Let $j_1$ be the maximal $j$ such that $j\geq j_{**}$, $j\in W_{k+2}$, and $\theta^{M,n}_{\beta_{k+2}^{-1}(j_{**})}\leq\theta^{M,n}_{\beta_{k+2}^{-1}(j)}$. From the induction hypothesis with $k+2$ and point (iv), we know $j_1\leq k+2$. Consider 2 cases:

If $j_1\notin W_{k+2}^{\prime}$. First observe for any $j>j_1$ and $j\in W_{k+2}$, we must have $\theta^{M,n}_{\beta_{k+2}^{-1}(j)}<\theta^{M,n}_{\beta^{-1}_{k+2}(j_1)}$.
Otherwise it will contradict the maximality of $j_1$. Also for any $j$ with $j_1<j\leq k+1$, and $j\notin W_{k+2}$, we conclude from the claim $\theta_j^{n,k+2}<\Theta(\theta^{M,n}_{\beta_{k+2}^{-1}(j_{**})},z_j,\delta t)\leq\Theta(\theta^{M,n}_{\beta_{k+2}^{-1}(j_1)},z_j,\delta t)$. 
The only possibility remains is that $k+2\notin W_{k+2}$, and $\theta^{n,k+2}_{k+2}\geq\Theta(\theta^{M,n}_{\beta_{k+2}^{-1}(j_1)},z_{k+2},\delta t)$. That is, $k+2$ is a dry parcel and cannot be beaten by $j_1$. Hence
$$
\theta_{k+2}^{n,k+1}\geq
\theta_{k+2}^{n,k+2}\geq
\Theta(\theta^{M,n}_{\beta^{-1}_{k+2}(j_1)},z_{k+2},\delta t)\geq\Theta(\theta^{M,n}_{\beta_{k+2}^{-1}(j_{**})},z_{k+1},\delta t).
$$
This is what we want. 

If $j_1\in W_{k+2}^{\prime}$, let $j_2\leq k+2$ be such that $\sigma_{k+2}(j_2)=k+2$. From the definition of the procedure, 
we have $\theta^{M,n}_{\beta_{k+2}^{-1}(j_2)}\geq\theta^{M,n}_{\beta_{k+2}^{-1}(j_1)}$, since the parcel that actually jumps up should have the largest $\theta^M$ among all "eligible" parcels.  From the inductive formula (\ref{2.2n}), we see 
$$
\theta_{k+2}^{n,k+1}=\Theta(\theta^{M,n}_{\beta_{k+2}^{-1}(j_2)},z_{k+2},\delta t)\geq\Theta(\theta^{M,n}_{\beta^{-1}_{k+2}(j_1)},z_{k+2},\delta t)\geq\Theta(\theta^{M,n}_{\beta_{k+2}^{-1}(j_{**})},z_{k+1},\delta t).
$$
So far we finished the proof of point (i).

It only remains to show the point (iv). This is equivalent to showing $\theta_j^{n,k}\geq \Theta(\theta_j^{n,k}+q_j^{n,k},z_j,0)$ for $1\leq j\leq k$, and $\theta_j^{n,k}\geq\Theta(\theta_j^{n,k}+q_j^{n,k},z_j,\delta t)$, for $k+1\leq j\leq n$. To see the first part, we know for $1\leq j\leq k$, $\theta_j^{n,k}=\theta_{\sigma_{k+1}^{-1}(j)}^{n,k+1}$, and we also know from point (ii) already proved that $\theta_j^{n,k}+q_j^{n,k}=\theta^{n,k+1}_{\sigma_{k+1}^{-1}(j)}+q_{\sigma_{k+1}^{-1}(j)}^{n,k}$. 
Also $\sigma_{k+1}^{-1}(j)\leq k+1$, since the rearrangement $\sigma_{k+1}$ never moves down a parcel by 2. Apply the induction hypothesis that (iv) holds for $k+1$ we see
$$
\theta_j^{n,k}=\theta_{\sigma_{k+1}^{-1}(j)}^{n,k+1}\geq\Theta(\theta^{n,k+1}_{\sigma_{k+1}^{-1}(j)}+q^{n,k+1}_{\sigma_{k+1}^{-1}(j)},z_{\sigma_{k+1}^{-1}(j)},0)\geq\Theta(\theta_j^{n,k}+q_j^{n,k},z_j,0).
$$
To see the second part, consider first when $W^{\prime}_{k+1}\neq\emptyset$, and if $j=k+1$, then we know from (\ref{2.2n})
$$
\theta^{n,k}_{k+1}=\Theta(\theta^{M,n}_{\beta_k^{-1}(k+1)},z_{k+1},\delta t)=\Theta(q^{n,k}_{k+1}+\theta^{n,k}_{k+1},z_{k+1},\delta t).
$$
In the second equality above, we used the point (ii) already proved, and also the definition of $\theta^{M,n}_j$ given in the beginning of this section. If instead $W_{k+1}^{\prime}=\emptyset$, then we know in particular $k+1\notin W_{k+1}$, hence
$$
\theta_{k+1}^{n,k}=\theta_{k+1}^{n,k+1}\geq
\Theta(\theta^{M,n}_{\beta_k^{-1}(k+1)},z_{k+1},\delta t)\geq\Theta(\theta_{k+1}^{n,k}+q_{k+1}^{n,k},z_{k+1},\delta t).
$$
If $k+2\leq j\leq n$, note $\sigma_{k+1}(j)=j$ we use the induction hypothesis and (\ref{2.2nn}) to conclude 
$$
\theta_j^{n,k}=\theta_j^{n,k+1}\geq
\Theta(\theta_j^{n,k+1}+q_j^{n,k+1},z_j,\delta t)=\Theta(\theta_j^{n,k}+q_j^{n,k},z_j,\delta t).
$$
This finishes the proof.
\end{proof}
Denote $\theta^n_j(\delta t)=\theta_j^{n,0}$, $q_j^n(\delta t)=q_j^{n,0}$. 
Then we have $q_j^n(\delta t)\leq Q^{sat}(\theta_j^n(\delta t),z_j,\delta t)$, 
for any $1\leq j\leq n$, and $j\longmapsto\theta_j^n(\delta t)$ is monotone increasing by Lemma \ref{2.1l}.  
Define the flow map at first time step $\tilde{F}^n_{\delta t}:[0,1)\rightarrow[0,1)$ be such that it shifts $J_i$ to $J_{\beta_0(i)}$ by translation, that is $\tilde{F}^n_{\delta t}(z)=z-z_i+z_{\beta_0(i)}$ for $z\in J_i$. 
Then $\tilde{F}_{\delta t}^n\#\mathcal{L}^1_{[0,1]}=\mathcal{L}^1_{[0,1]}$.
Apply the previous procedure to $\{\theta_j^n(\delta t)\}_{j=1}^n$, $\{q_j^n(\delta t)\}_{j=1}^n$, 
but with $Q^{sat}$ evaluated at $\delta t$ to get $\{\theta_j^n(2\delta t)\}_{j=1}^n$, $\{q_j^n(2\delta t)\}_{j=1}^n$, 
and the corresponding flow map $\tilde{F}_{2\delta t}^n:[0,1)\rightarrow[0,1)$. 
Repeating the procedure,
 we get a sequence of solutions at discrete times $\{\theta_j^n(k\delta t)\}_{j=1}^n$, $q_j^n(k\delta t)\}_{j=1}^n$, and a sequence of flow maps $\tilde{F}^n_{k\delta t}$ connecting $k\delta t$ and $(k+1)\delta t$.
Denote $\theta^{M,n}_j(k\delta t)=\theta_j^n(k\delta t)+q_j^n(k\delta t)$.
 Here $k$ is an integer with $0\leq k\leq\frac{T}{\delta t}+1$.
Define $F_{k\delta t}=\tilde{F}_{k\delta t}\circ\cdots \tilde{F}_{\delta t}$. We will also denote $\alpha_{k\delta t}$, $\tilde{\alpha}_{k\delta t}$ be the corresponding rearrangement map on the discrete indices $\{1,2,\cdots,n\}$.
Denote $\theta^n(t,z)=\theta_j^n(k\delta t)$, $q^n(t,z)=q_j^n(k\delta t)$ if $z\in J_j$ and $k\delta t\leq t<(k+1)\delta t$. 
Also $F^n(t,z)=F_{k\delta t}$ if $k\delta t\leq t<(k+1)\delta t$. 
Define $\theta_0^n(z)=\theta_j^n$, $q^n_0(z)=q_j^n$, and $\theta^{M,n}_0(z)=
\theta^n_0(z)+q^n_0(z)$, for $z\in J_j$.
We deduce a immediate corollary of Lemma \ref{2.1l}.
\begin{cor}\label{2.2c}
(i)$z\longmapsto\theta^n(t,z)$ is monotone increasing for any $t\in[0,T)$.

(ii)Denote $\hat{\theta}^n(t,z)=\theta^n(t,F^n(t,z))$, $\hat{q}^n(t,z)=q^n(t,F^n(t,z))$, then we have $\hat{\theta}^n(t,z)+\hat{q}^n(t,z)=
\theta^n_0(z)+q^n_0(z)$.

(iii)$t\longmapsto\hat{\theta}^n(t,z)$ is monotone increasing for any $z\in[0,1)$.

(iv)$q^n(t,z)\leq Q^{sat}(\theta^n(t,z),z,k\delta t)$, where $k$ is the integer such that $k\delta t\leq t<(k+1)\delta t$.
\end{cor}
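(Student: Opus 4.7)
The plan is to derive Corollary \ref{2.2c} by unfolding Lemma \ref{2.1l} across the discrete time steps and translating the discrete indices back into the piecewise constant objects $\theta^n(t,z)$, $q^n(t,z)$ and $F^n(t,z)$. First I would fix the dictionary: for $t\in[k\delta t,(k+1)\delta t)$ and $z\in J_j$ we have $\theta^n(t,z)=\theta^n_j(k\delta t)$; writing $i_k(z)=\tilde{\alpha}_{k\delta t}\circ\cdots\circ\tilde{\alpha}_{\delta t}(j)$ for the Eulerian index of the parcel originally starting in $J_j$, we get $F^n(t,z)\in J_{i_k(z)}$, $\hat\theta^n(t,z)=\theta^n_{i_k(z)}(k\delta t)$ and $\hat q^n(t,z)=q^n_{i_k(z)}(k\delta t)$. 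With this in hand each of the four claims becomes a one-step statement iterated in $k$.

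The assertions (i), (ii), (iv) are essentially direct. For (i), Lemma \ref{2.1l}(i) at subindex $0$ gives $\theta^n_j(\delta t)\le\theta^n_{j+1}(\delta t)$; reapplying the argument to each successive step with $Q^{sat}$ shifted to the current time yields $\theta^n_j(k\delta t)\le\theta^n_{j+1}(k\delta t)$ for every $k$, which is the piecewise constant monotonicity in $z$. For (iv), Lemma \ref{2.1l}(iv) at subindex $0$ is exactly $q^n_j(k\delta t)\le Q^{sat}(\theta^n_j(k\delta t),z_j,k\delta t)$ at each discrete time. For (ii), Lemma \ref{2.1l}(ii) is conservation of $\hat\theta+\hat q$ along each parcel within a single step; chaining these identities along the composed flow $F_{(k+1)\delta t}=\tilde F_{(k+1)\delta t}\circ F_{k\delta t}$ and tracking the parcel back to its time-zero label produces $\hat\theta^n(t,z)+\hat q^n(t,z)=\theta^n_0(z)+q^n_0(z)$ by a one-line induction on $k$.

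The part I expect to need the most care is (iii), because monotonicity of $t\mapsto\hat\theta^n(t,z)$ must be verified across time-step boundaries while keeping the original time-zero Lagrangian label separate from the fresh Lagrangian label that the algorithm reuses at each restart. Inside $[k\delta t,(k+1)\delta t)$ the monotonicity is trivial since $\hat\theta^n$ is constant there. To cross the boundary at $t=(k+1)\delta t$ I would fix $z\in J_j$, set $i=i_k(z)$, and observe that at the start of the $(k+1)$st step the fresh label coincides with the Eulerian index $i$, so that $\hat\theta^{n,n}_i=\theta^n_i(k\delta t)=\hat\theta^n(k\delta t,z)$ while $\hat\theta^{n,0}_i=\theta^n_{\tilde{\alpha}_{(k+1)\delta t}(i)}((k+1)\delta t)=\hat\theta^n((k+1)\delta t,z)$. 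The descending chain $\hat\theta^{n,n}_i\le\hat\theta^{n,n-1}_i\le\cdots\le\hat\theta^{n,0}_i$ furnished by Lemma \ref{2.1l}(iii) then delivers $\hat\theta^n((k+1)\delta t,z)\ge\hat\theta^n(k\delta t,z)$, and monotonicity extends to all of $[0,T)$. The only real bookkeeping throughout is this relabelling at each restart; once that is pinned down, the corollary follows by direct iteration with no new ideas beyond those already in Lemma \ref{2.1l}.
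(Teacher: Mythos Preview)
Your proposal is correct and matches the paper's approach: the paper simply states this as an ``immediate corollary'' of Lemma \ref{2.1l} without writing out a proof, and what you have done is exactly the intended unfolding---apply each part of Lemma \ref{2.1l} at the terminal substep $k=0$ of one time step, then iterate over successive time steps, with the relabelling you describe for (iii). Your bookkeeping for (iii), identifying the fresh Lagrangian label at the start of step $k{+}1$ with the Eulerian index $i=\alpha_{k\delta t}(j)$ and then reading off the chain $\hat\theta^{n,n}_i\le\cdots\le\hat\theta^{n,0}_i$ from Lemma \ref{2.1l}(iii), is precisely the point; no further ideas are needed.
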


Now we can have a more precise description of the motion of a single particle.
\begin{lem}\label{3.4nl}
Suppose that $n\delta t<\frac{\inf\partial_z\Theta}{\sup|\partial_t\Theta|}$, where both sup and inf are taken on the set $\{(w,z,t):|w|\leq\max_j|\theta^{M,n}_j|\,,z\in[0,1],\,t\in[0,T]\}$, then one of the following must hold:\\
(i)There exists a unique $k_1\in\{1,2,\cdots,n\}$, such that $\beta_{k_1-1}(j)=\sigma_{k_1}(\beta_{k_1}(j))=k_1>\beta_{k_1}(j)$. Besides, for any $k\geq k_1$, $\beta_k(j)=\beta_{k+1}(j)$, and any $k\leq k_1-1$, $\beta_k(j)=k_1$.\\
(ii)$\beta_k(j)\leq \beta_{k+1}(j)$ for any $0\leq k\leq n-1$.
\end{lem}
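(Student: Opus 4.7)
My plan is to bootstrap Lemma \ref{3.1nl}: its dichotomy already produces case (ii) as one alternative, and in the other alternative gives only the weak inequality $\beta_k(j)\leq\beta_{k+1}(j)$ for $k\geq k_1$. What remains is to upgrade these inequalities to equalities using the time-step restriction; since $\beta_n(j)=j$, this is equivalent to proving $\beta_{k_1}(j)=j$.

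Assume case (i) of Lemma \ref{3.1nl} holds and, for contradiction, that $\beta_{k_1}(j)<j$. Set $p:=\beta_{k_1}(j)$, so $p<j$ and $z_j-z_p\geq 1/n$. By the uniqueness of $k_1$, parcel $j$ is never lifted during any of the rearrangements $\sigma_n,\sigma_{n-1},\dots,\sigma_{k_1+1}$; between step $n$ and the entry into step $k_1$ it only undergoes identity or unit shift-downs. The non-lift branch of the update rule (\ref{2.2n}) propagates the parcel's $\theta$-label unchanged along such shifts, so in Lagrangian form $\hat\theta^{n,k_1}_j=\hat\theta^{n,n}_j=\theta_j^n$, i.e.\ $\theta^{n,k_1}_p=\theta_j^n$.

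Next I use two constraints on $\theta_j^n$ to force a contradiction. Since parcel $j$ is lifted at step $k_1$, in particular $p\in W_{k_1}$, which by (\ref{2.3n}) yields $\theta^{n,k_1}_p<\Theta(\theta^{M,n}_j,z_p,\delta t)$, hence $\theta_j^n<\Theta(\theta^{M,n}_j,z_p,\delta t)$. On the other hand, the initial data satisfies $q_j^n\leq Q^{sat}(\theta_j^n,z_j,0)$, which by Lemma \ref{1.1l} is equivalent to $\theta_j^n\geq\Theta(\theta^{M,n}_j,z_j,0)$. Combining these two,
$$
\Theta(\theta^{M,n}_j,z_j,0)<\Theta(\theta^{M,n}_j,z_p,\delta t).
$$
The mean value theorem together with the strict monotonicity $\partial_z\Theta>0$ from (\ref{mT}) gives
$$
\Theta(\theta^{M,n}_j,z_j,0)-\Theta(\theta^{M,n}_j,z_p,\delta t)\geq (z_j-z_p)\inf\partial_z\Theta-\delta t\sup|\partial_t\Theta|\geq \tfrac{1}{n}\inf\partial_z\Theta-\delta t\sup|\partial_t\Theta|,
$$
where the inf and sup are taken over the set specified in the lemma's hypothesis (to which $(\theta^{M,n}_j,z_p,\tau)$ belongs for any $\tau\in[0,\delta t]$). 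This lower bound is strictly positive by the assumption $n\delta t<\inf\partial_z\Theta/\sup|\partial_t\Theta|$, contradicting the previous display.

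The conceptual heart of the argument is that the stated time-step restriction exactly encodes the dominance of vertical displacement over temporal drift of the saturation surface, so that a parcel displaced downward by even a single grid-cell of width $1/n$ cannot become wet against the modestly updated $Q^{sat}$ at the next discrete time. The only technical subtlety is the bookkeeping of the $\theta$-label along the pure shift-downs to justify $\theta^{n,k_1}_p=\theta_j^n$; this is immediate from the first (non-lift) branch of (\ref{2.2n}) applied inductively from $k=n-1$ down to $k=k_1$.
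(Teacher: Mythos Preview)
Your proof is correct and rests on the same key inequality as the paper's, but the organization differs slightly. The paper localizes the contradiction: it picks any single step $k\geq k_1$ with a strict drop $\beta_k(j)<\beta_{k+1}(j)$, invokes Lemma~\ref{2.1l}(iv) at the intermediate stage $k+1$ to get $\theta^{n,k+1}_{\beta_{k+1}(j)}\geq\Theta(\theta^{M,n}_j,z_{\beta_{k+1}(j)},0)$, and then uses the smallness of $n\delta t$ to conclude $\beta_k(j)\notin W_k$, after which the ``moreover'' part of Lemma~\ref{3.1nl} forbids any later jump. You instead argue globally: you track the unchanged $\theta$-label from the initial slot $j$ all the way down to $p=\beta_{k_1}(j)$ and compare the wetness condition at $p$ directly against the \emph{initial} constraint $\theta_j^n\geq\Theta(\theta^{M,n}_j,z_j,0)$. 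This buys you a minor simplification---you never need Lemma~\ref{2.1l}(iv)---at the cost of relying on the fact that $\beta_n=id$, which is specific to the first time step; the paper's local argument transfers verbatim to later time steps once Corollary~\ref{2.2c}(iv) is in hand.
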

This lemma says that if $n\delta t$ is small enough, then for any parcel experiencing a jump, the rearrangements before and after the jump will fix this parcel. In particular, if a parcel gets pushed down($\beta_{k-1}(j)<\beta_k(j)$ for some $k$), then we must be in the second alternative, and by Lemma \ref{3.1nl}, it cannot overtake any other parcel.
\begin{proof}
The only difference between this lemma and Lemma \ref{3.1nl} is that in the first alternative, we can now conclude $\beta_k(j)=\beta_{k+1}(j)$ for any $k\geq k_1$.
Suppose we are in the first alternative of Lemma \ref{3.1nl}, and $\beta_k(j)<\beta_{k+1}(j)$ for some $k\geq k_1$, we will show that $\beta_k(j)\notin W_k$. 
Clearly we have $\beta_{k+1}(j)\leq k+1$, since $\sigma_{k+1}$ fix all index strictly bigger than $k+1$. From Lemma \ref{2.1l}, we know $\theta_{\beta_{k+1}(j)}^{n,k+1}\geq\Theta(\theta_j^{M,n},z_{\beta_{k+1}(j)},0)$. Since $\sigma_{k+1}$ moves $\beta_{k+1}(j)$ down, it does not change the value of $\theta$, hence $\theta^{n,k+1}_{\beta_{k+1}(j)}=\theta^{n,k}_{\beta_k(j)}$, and $\beta_{k+1}(j)\geq\beta_k(j)+1$. It follows that:
\begin{equation*}
\begin{split}
\theta^{n,k}_{\beta_k(j)}&=\theta_{\beta_{k+1}(j)}^{n,k+1}\geq\Theta(\theta_j^{M,n},z_{\beta_{k+1}(j)},0)\\
&\geq\Theta(\theta_j^{M,n},z_{\beta_k(j)},\delta t)+(\inf\partial_z\Theta) n^{-1}-\sup|\partial_t\Theta|\delta t>\Theta(\theta_j^{M,n},z_{\beta_k(j)},\delta t).
\end{split}
\end{equation*}
The last step used the smallness of $n\delta t$. Hence $\beta_k(j)\notin W_k$. It follows that $\beta_{k-1}(j)=\sigma_k(\beta_k(j))\leq\beta_k(j)$. Repeat the argument shown in the proof of "Moreover" part of Lemma \ref{3.1nl}, we see $\beta_k(j)$ will keep decreasing starting from $k$, and then no jump up is possible. 
\end{proof}
To conclude this section, we make a simple observation which will be useful in the next section.
\begin{lem}\label{2.3l}
Suppose for some pair of index $j_1,j_2\in\{1,2,\cdots,n\}$, and for some $k$, we have $\alpha_{k\delta t}(j_1)<\alpha_{k\delta t}(j_2)$, 
and $\alpha_{(k+1)\delta t}(j_1)>\alpha_{(k+1)\delta t}(j_2)$, then $\theta^{M,n}_{j_1}>\theta^{M,n}_{j_2}$. In particular, $\alpha_{l\delta t}(j_1)>\alpha_{l\delta t}(j_2)$ for all $l>k$.
\end{lem}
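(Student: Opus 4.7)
The plan is to zoom in on the internal iteration $\sigma_1\circ\cdots\circ\sigma_n$ that transforms $\alpha_{k\delta t}$ into $\alpha_{(k+1)\delta t}$ and pinpoint the single sub-step responsible for the reversal. Setting $i_m:=\alpha_{k\delta t}(j_m)$ so that $i_1<i_2$, and using the partial compositions $\beta_l=\sigma_{l+1}\circ\cdots\circ\sigma_n$ from (\ref{2.2nn}), we have $\beta_n(i_1)<\beta_n(i_2)$ and $\beta_0(i_1)>\beta_0(i_2)$. Pick the largest $k_3$ at which the order flips, so that $\beta_{k_3}(i_1)<\beta_{k_3}(i_2)$ and $\beta_{k_3-1}(i_1)>\beta_{k_3-1}(i_2)$. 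The final sentence of Lemma \ref{3.1nl} then forces the first alternative of that lemma to hold for $i_1$ with $k_1=k_3$, i.e.\ $\sigma_{k_3}$ lifts $\beta_{k_3}(i_1)$ up to $k_3$. By the rule (\ref{2.2nn}), this means $\beta_{k_3}(i_1)\in W_{k_3}^{\prime}$ and that $\theta^{M,n}_{i_1}(k\delta t)$ is the maximum of $\theta^{M,n}_{\beta_{k_3}^{-1}(j)}(k\delta t)$ over $j\in W_{k_3}^{\prime}$. Inspecting how $\sigma_{k_3}$ acts on the interval $[\beta_{k_3}(i_1),k_3]$, the reversal further forces $\beta_{k_3}(i_1)<\beta_{k_3}(i_2)\leq k_3$.

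The main step is to derive $\theta^{M,n}_{i_1}(k\delta t)>\theta^{M,n}_{i_2}(k\delta t)$, which I would split according to whether $\beta_{k_3}(i_2)\in W_{k_3}$. If it is, the selection clause (second bullet) in the definition (\ref{2.3n}) of $W_{k_3}^{\prime}$, applied with $j_0=\beta_{k_3}(i_1)$ and $j=\beta_{k_3}(i_2)$, yields the desired strict inequality at once. If $\beta_{k_3}(i_2)\notin W_{k_3}$, the first bullet of (\ref{2.3n}) gives
$$
\theta_{\beta_{k_3}(i_2)}^{n,k_3}<\Theta\bigl(\theta^{M,n}_{i_1}(k\delta t),z_{\beta_{k_3}(i_2)},\delta t\bigr),
$$
while the very fact that $\beta_{k_3}(i_2)\notin W_{k_3}$ reads
$$
\theta_{\beta_{k_3}(i_2)}^{n,k_3}\geq\Theta\bigl(\theta^{M,n}_{i_2}(k\delta t),z_{\beta_{k_3}(i_2)},\delta t\bigr);
$$
chaining these two inequalities at the common point $z_{\beta_{k_3}(i_2)}$ and invoking the strict monotonicity of $\Theta$ in its first slot (\ref{mT}) produces $\theta^{M,n}_{i_1}(k\delta t)>\theta^{M,n}_{i_2}(k\delta t)$. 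Conservation of $\theta^M$ along the discrete Lagrangian flow (Corollary \ref{2.2c}(ii)) then identifies $\theta^{M,n}_{i_m}(k\delta t)=\theta^{M,n}_{j_m}$, finishing the first assertion.

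For the ``in particular'' statement I would argue by contradiction: if $\alpha_{l\delta t}(j_1)<\alpha_{l\delta t}(j_2)$ for some $l>k$, choosing the smallest such $l_*$ isolates a single time step $(l_*-1)\delta t\to l_*\delta t$ on which the order reverses back, and applying the part just proved to that step, with the roles of $j_1$ and $j_2$ swapped, gives $\theta^{M,n}_{j_2}>\theta^{M,n}_{j_1}$, contradicting what was just proved. The one place where I anticipate real work is the dry sub-case $\beta_{k_3}(i_2)\notin W_{k_3}$: unlike the wet sub-case it is not a direct consequence of the $W_{k_3}^{\prime}$ selection rule and must instead be extracted from the two-sided squeeze on $\theta_{\beta_{k_3}(i_2)}^{n,k_3}$ together with the strict monotonicity (\ref{mT}); keeping clean the distinction between the ``time $k\delta t$'' indexing used inside the internal iteration and the global Lagrangian indexing at $t=0$ is the main bookkeeping hazard.
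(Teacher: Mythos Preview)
Your proof is correct and follows essentially the same route as the paper: locate the sub-step $m_0$ (your $k_3$) at which the order of the two parcels first flips inside the internal iteration, observe that the lower parcel must therefore be the one in $W_{m_0}'$ that jumps to slot $m_0$, and then split into the wet/dry cases for the upper parcel exactly as the paper does. Your treatment is in fact a little more explicit than the paper's (you cite Lemma~\ref{3.1nl} for the jump identification and Corollary~\ref{2.2c}(ii) for the conservation step, and you spell out the ``in particular'' clause by contradiction, whereas the paper just remarks that $\theta^M$ is invariant along parcels). One cosmetic slip: since you are working at the $k$-th time step, the time argument in your $\Theta$-inequalities should be $(k+1)\delta t$ rather than $\delta t$; the paper writes it this way and your own use of $\theta^{M,n}_{i_m}(k\delta t)$ shows you are aware of the shift.
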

In plain English, this lemma says if the index $j_1$ is initially below $j_2$, then a necessary condition for $j_1$ to overtake $j_2$ is to have strictly larger $\theta^M$. 
This means, in particular, that $j_2$ cannot overtake $j_1$ again since $\theta^M$ is invariant along parcels.
\begin{proof}
Here we write $\tilde{\alpha}_{(k+1)\delta t}=\sigma_1\circ\cdots\circ\sigma_n$ and $\beta_k=\sigma_{k+1}\circ\cdots\circ\sigma_n$. Let $m_0$ be the maximal integer $m$ for which $\beta_{m-1}(\alpha_{k\delta t}(j_1))>\beta_{m-1}(\alpha_{k\delta t}(j_2))$. Then $\beta_{m_0}(\alpha_{k\delta t}(j_1))<\beta_{m_0}(\alpha_{k\delta t}(j_2))$, and $\beta_{m_0}(\alpha_{k\delta t}(j_1))\in W_m^{\prime}$.
 If $\beta_{m_0}(\alpha_{k\delta t}(j_2))\in W_m$, then we immediately have $\theta^{M,n}_{j_1}>\theta^{M,n}_{j_2}$. Otherwise,
$$
\Theta(\theta^{M,n}_{j_1},z_{\beta_{m_0}(\alpha_{k\delta t}(j_2))},(k+1)\delta t)>\theta_{\beta_{m_0}(\alpha_{k\delta t}(j_2))}^{n,m_0}(k\delta t)\geq\Theta(\theta^{M,n}_{j_2},z_{\beta_{m_0}(\alpha_{k\delta t}(j_2))},(k+1)\delta t).
$$
The first inequality used the definition of $W_m^{\prime}$, the second used definition of $W_m$.
\end{proof}

\section{Estimates on the discrete solution}

Next we do some estimates on the discrete solutions.
Denote $M'=||\theta_0^n||_{L^{\infty}(0,1)}+||q_0^n||_{L^{\infty}(0,1)}$.   
In the following, we say a constant is universal, if it only depends on $M'$, $T$, and $Q^{sat}$. 
We will derive the following estimates for the discrete solutions in this section. They are collected in the following theorem.
\begin{thm}\label{3.1t}

(i)$||\theta^n||_{L^{\infty}((0,1)\times(0,T))}+||q^n||_{L^{\infty}((0,1)\times(0,T))}\leq C_1$, for some universal constant $C_1$.

(ii)There exists a universal constant $C_2>0$, such that for any $\eps>\frac{C_2}{n}$, and any $t\in[0,T)$, if $\hat{\theta}^n(t,z)>\Theta(\theta^{M,n}(t,z),F^n_t(z),t)+\eps$, 
then we have $\hat{\theta}_{t^{\prime}}(z)=\hat{\theta}_t(z)$, and $F_{t^{\prime}}(z)\leq F_t(z)$ holds for any $t^{\prime}-t<\frac{\eps}{C_2}$.

(iii)For any $z\in[0,1]$, $TV_{t\in[0,T)}(F^n_t(z))\leq C_3$.

If $n\delta t<\frac{1}{C_4'}$ for some universal constant $C_4'>0$, then the following hold:

(iv)For any $\eps>0$, and any $[t-\eps,t+\eps]\subset[0,T)$, we have
$|\hat{\theta}^n_{t+\eps}(z)-\hat{\theta}^n_{t-\eps}(z)-(\Theta(\theta^{M,n}(z),F^n_{t+\eps}(z),t+\eps)-\Theta(\theta^{M,n}(z),F^n_{t-\eps}(z),t-\eps))^+|\leq2 C_4(\eps+\delta t)$.

(v)For any $s<t\in[0,T)$, we have $||\theta^n(t,\cdot)-\theta^n(s,\cdot)||_{L^1([0,1])}=||\hat{\theta}^n(t,\cdot)-\hat{\theta}^n(s,\cdot)||_{L^1([0,1])}\leq C_5\sqrt{t-s+\delta t}$.

(vi)For any $s<t\in[0,T)$, we have $||F^n(t,\cdot)-F^n(s,\cdot)||_{L^1([0,1])}\leq C_6\sqrt{t-s+\delta t}$ for some universal constant $C$.

\end{thm}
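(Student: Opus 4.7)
The plan is to prove the six estimates roughly in the order stated, with (iv) as the main technical core and (v)--(vi) extracted from it via a rearrangement identity together with a bootstrap. For (i), I use that $\theta^{M,n}=\theta^n+q^n$ is conserved along parcels (Corollary \ref{2.2c}(ii)), so $||\theta^{M,n}||_{L^\infty}\le M'$, while $\hat\theta^n$ is nondecreasing in $t$ (Corollary \ref{2.2c}(iii)) and bounded below by $\theta_0^n$. Each update of $\hat\theta^n$ in the algorithm sets its value to $\Theta(\theta^{M,n},z_{\mathrm{new}},\tau)$ with parameters in a compact set, so smoothness of $\Theta$ gives a uniform upper bound; this transfers to $\theta^n$ (hence $q^n$) through the measure-preserving $F^n_t$. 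For (ii), the essential input is Lemma \ref{3.4nl}: a parcel not in $W_k$ is never lifted by $\sigma_k$, so its $\hat\theta^n$ value is preserved and $F^n_\cdot$ can only decrease until it becomes saturated again. The margin $\hat\theta^n-\Theta(\theta^{M,n},F^n_\cdot,\cdot)$ can shrink at rate at most $||\partial_t\Theta||_\infty$ per unit time plus $||\partial_z\Theta||_\infty/n$ per downward step, so provided $\eps>C_2/n$ a short induction on time steps yields dryness throughout $t'-t<\eps/C_2$.

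For (iii) I split the trajectory $t\mapsto F^n_t(z)$ of each parcel into upward jumps and downward single-step motions. Every upward jump by some $\sigma_{k+1}$ changes $\hat\theta^n(\cdot,z)$ from $\theta^{n,k+1}_{\beta_{k+1}(z)}$ to $\Theta(\theta^{M,n}(z),z_{\mathrm{new}},\delta t)$, and since the parcel was wet the original value satisfied $\theta^{n,k+1}_{\beta_{k+1}(z)}<\Theta(\theta^{M,n}(z),z_{\mathrm{old}},\delta t)$; by strict monotonicity $\partial_z\Theta\ge c>0$ the $\hat\theta^n$ gain is therefore at least $c(z_{\mathrm{new}}-z_{\mathrm{old}})$. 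Summing and using (i) bounds the total upward motion of any single parcel by $2C_1/c$, and the total downward motion is at most the total upward motion plus $1$ (since $F^n_t(z)\in[0,1]$), giving (iii).

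For (iv) I would analyze, for each fixed parcel $z$ and interval $[t-\eps,t+\eps]$, how $\hat\theta^n$ evolves relative to $\Theta(\theta^{M,n}(z),F^n_\cdot(z),\cdot)$. The upper bound uses that every increase of $\hat\theta^n$ resets it to the current value of $\Theta$: any intermediate maximum of $\Theta$ during the interval differs from its endpoint values by at most $C(\eps+\text{motion of }F^n_\cdot(z))$, and this motion, during subintervals where $\hat\theta^n$ is strictly increasing, is controlled via (iii) combined with (ii). The matching lower bound uses the saturation constraint $\hat\theta^n_\tau(z)\ge\Theta(\theta^{M,n}(z),F^n_\tau(z),\tau)$, which (modulo the $O(\delta t)$ timestep offset built into the algorithm) forces $\hat\theta^n_{t+\eps}(z)\ge\Theta(\theta^{M,n}(z),F^n_{t+\eps}(z),t+\eps)-O(\delta t)$. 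This is the main obstacle: one must handle cases depending on whether the parcel is saturated or dry at the two endpoints, and invoke Lemma \ref{3.4nl} (which requires $n\delta t$ small) to rule out wet/dry oscillations within $[t-\eps,t+\eps]$.

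Finally (v) and (vi) are obtained together. The equality in (v) follows from two observations: (a) because monotone rearrangements preserve pointwise ordering and $\hat\theta^n$ is nondecreasing in $t$, the monotone-in-$z$ functions satisfy $\theta^n(t,\cdot)\ge\theta^n(s,\cdot)$ pointwise; (b) by $F^n_t$ measure-preserving, $\int\hat\theta^n_\tau\,dz=\int\theta^n(\tau,\cdot)\,dz$. Hence both $L^1$ norms equal $\int_0^1(\hat\theta^n_t-\hat\theta^n_s)\,dz$, which by (iv) and Lipschitz continuity of $\Theta$ is bounded by $C(||F^n_t-F^n_s||_{L^1}+(t-s)+\delta t)$, so (v) reduces to (vi). For (vi) I would split $[0,1]$ into the set of parcels that, at time $s$, are dry with margin at least $\delta:=\sqrt{C(t-s+\delta t)}$ (on which, by (ii), only downward motion of total $L^1$ size $O(\sqrt{t-s+\delta t})$ occurs) and its complement (whose Lebesgue measure is controlled via a Chebyshev argument using the total variation bound (iii), again yielding a $\sqrt{t-s+\delta t}$ contribution). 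Closing this bootstrap between $||\hat\theta^n_t-\hat\theta^n_s||_{L^1}$ and $||F^n_t-F^n_s||_{L^1}$ is the other delicate point, and the $\sqrt{t-s}$ scaling reflects the balance between the measure of the near-saturation layer and the motion it can undergo.
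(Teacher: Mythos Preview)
Your treatment of (i)--(iii) is correct and essentially follows the paper. The substantive gap is in (iv)--(vi): you are missing the paper's key ingredient, a \emph{finite speed of penetration} lemma. This says that when $n\delta t$ is small, the number of parcels that overtake any fixed parcel $j_0$ between time steps $k$ and $l$ is at most $2(l-k)$ (equivalently, the measure of $\{z:F^n_{t_0}(z)<F^n_{t_0}(z_0),\ F^n_{t_0+\eps}(z)>F^n_{t_0+\eps}(z_0)\}$ is $\le C(\eps+\delta t)$). The proof is a nontrivial combinatorial argument combining Lemma~\ref{3.4nl} with the priority rule of the algorithm: if three parcels $j_1,j_2,j_3$ overtook $j_0$ in a single step, one shows $\theta^{M,n}_{j_3}\le\theta^{M,n}_{j_2}\le\theta^{M,n}_{j_1}$ and then that $j_3$ cannot beat the parcel sitting just above $j_1$'s original slot, since that parcel has already dropped two positions. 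This does \emph{not} follow from (ii) and (iii).

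Without the penetration bound your arguments do not close. In (iv), the upper bound on $\kappa$ reduces to controlling the drop $\max_{\tau\in[t-\eps,t+\eps]}F^n_\tau(z)-F^n_{t+\eps}(z)$; this drop equals the measure of parcels overtaking $z$ between the maximizing time and $t+\eps$, and only the penetration lemma makes it $O(\eps+\delta t)$. Your appeal to (iii) gives only the global constant $C_3$, and (ii) says nothing about wet intervals. For (v)--(vi) your bootstrap is genuinely circular: from (iv) you get $\|\hat\theta^n_t-\hat\theta^n_s\|_{L^1}\le C\int(F^n_t-F^n_s)^++C(t-s+\delta t)$, while (\ref{3.10}) gives $\int(F^n_t-F^n_s)^+\le C'\|\hat\theta^n_t-\hat\theta^n_s\|_{L^1}$ with $CC'=\sup\partial_z\Theta/\inf\partial_z\Theta\ge1$, so nothing absorbs. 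Your direct attack on (vi) also fails because the near-saturation set can have full measure and (iii) gives no short-time $L^1$ control; there is nothing for Chebyshev to act on. The paper instead uses the penetration lemma to show $\mathcal L^1\{z:\sup_{\tau\in[s,t]}F^n_\tau(z)-F^n_s(z)\ge\kappa\}\le C(t-s)/\kappa$, optimizes $\kappa=\sqrt{t-s}$ to obtain (v), and only then deduces (vi) from (v) via (\ref{3.10}).
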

Thoughout this section, we make the following conventions: when we write expressions like $\sup|\partial_t\Theta|$ and so on, they are assumed to be taken over the set $\{(w,z,t)\in\mathbb{R}^3:|w|\leq M',\,z\in[0,1],\,t\in[0,T]\}$ unless otherwise stated.

We start with point (i) of above theorem.
\begin{lem}
There exists a universal constant $C_1>0$, such that
$$
||\theta^n||_{L^{\infty}((0,1)\times(0,T))}+||q^n||_{L^{\infty}((0,1)\times(0,T))}\leq C_1.
$$
\end{lem}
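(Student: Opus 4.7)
The plan is to bound $\theta^n$ first, and then get the bound on $q^n$ from the conservation law $\theta^{M,n}=\theta^n+q^n$. The key idea is that the rearrangement procedure only ever creates new values of $\theta$ via the formula (\ref{2.2n}), and those new values are values of the smooth function $\Theta$ evaluated on an a priori compact set of arguments.

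\smallskip

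\noindent\textbf{Step 1 (conservation of $\theta^M$).} By Lemma \ref{2.1l}(ii), $\hat\theta^{n,k}_j+\hat q^{n,k}_j=\theta^n_j+q^n_j=\theta^{M,n}_j$ at every intermediate stage $k$ of every time step. Hence on every parcel $\theta^M$ equals its initial value. Consequently, for every $t\in[0,T)$ and $z\in[0,1)$,
\[
|\theta^{M,n}(t,z)|\leq \|\theta_0^n\|_\infty+\|q_0^n\|_\infty= M'.
\]

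\smallskip

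\noindent\textbf{Step 2 (a universal bound for $\Theta$).} Because $\Theta$ is smooth, the quantity
\[
C_\Theta\defd\sup\bigl\{|\Theta(w,z,t)|:\;|w|\leq M',\;z\in[0,1],\;t\in[0,T]\bigr\}
\]
is finite and depends only on $M'$, $T$, and $Q^{sat}$; i.e.\ it is universal.

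\smallskip

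\noindent\textbf{Step 3 (induction on the rearrangement).} I claim that at every intermediate stage,
\[
\max_{1\leq j\leq n}\bigl|\theta^{n,k}_j(m\delta t)\bigr|\leq \max(M',C_\Theta).
\]
The base case is $m=0$, $k=n$, where $|\theta^n_j|\leq M'$. For the inductive step, suppose the bound holds after the previous update. Apply $\sigma_{k+1}$. If $W'_{k+1}=\emptyset$ the values are unchanged, so nothing to prove. Otherwise by (\ref{2.2n}):
\begin{itemize}
\item For $j\neq k+1$, $\theta^{n,k}_j=\theta^{n,k+1}_{\sigma_{k+1}^{-1}(j)}$, which is one of the previously bounded values.
\item For $j=k+1$, $\theta^{n,k}_{k+1}=\Theta\bigl(\theta^{M,n}_{\beta_k^{-1}(k+1)},z_{k+1},\delta t\bigr)$. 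Since $|\theta^{M,n}_\cdot|\leq M'$ by Step 1, this value has absolute value at most $C_\Theta$.
\end{itemize}
Hence the bound propagates. The same argument applies when passing from one time step to the next, because the initial data for step $m+1$ is simply $\theta^n_j(m\delta t)=\theta^{n,0}_j(m\delta t)$, already bounded.

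\smallskip

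\noindent\textbf{Step 4 (bound on $q^n$).} Using Step 1 and Step 3,
\[
\|q^n\|_\infty=\|\theta^{M,n}-\theta^n\|_\infty\leq M'+\max(M',C_\Theta).
\]
Combining, $\|\theta^n\|_\infty+\|q^n\|_\infty\leq M'+2\max(M',C_\Theta)$, which is universal; take this to be $C_1$.

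\smallskip

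There is no real obstacle here; the only point requiring care is the bookkeeping in Step 3 to verify that the rearrangement $\sigma_{k+1}$ only introduces one genuinely new value per iteration, namely the one prescribed by (\ref{2.2n}). The conservation of $\theta^{M,n}$ in Step 1 is what keeps the argument of $\Theta$ inside the compact set on which $C_\Theta$ is defined, and that is what closes the induction.
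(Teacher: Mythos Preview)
Your proof is correct and follows essentially the same approach as the paper: bound $\theta^{M,n}$ by $M'$ via conservation, observe that each rearrangement step either permutes existing $\theta$-values or introduces a single new value of the form $\Theta(\cdot,\cdot,\cdot)$ evaluated on the compact set $\{|w|\leq M'\}\times[0,1]\times[0,T]$, and conclude by induction. The only difference is that you track the intermediate stages $k$ within a time step explicitly, while the paper inducts directly from one time step to the next; this is harmless extra bookkeeping.
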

\begin{proof}
We need to go back to the construction of the discrete solution. 
First we know from Corollary \ref{2.2c} point (ii) that for any $z\in[0,1]$ $$\theta^n(t,z)+q^n(t,z)=\hat{\theta}^n(t,(F^n_t)^{-1}(z))+\hat{q}^n(t,(F_t^n)^{-1}(z))=\theta^n_0((F_t^n)^{-1}(z))+q^n_0((F_t^n)^{-1}(z)).$$
Therefore $||\theta^n+q^n||_{L^{\infty}}\leq M'$. 

On the other hand, from the construction of $\theta_l^n(k\delta t)$, 
we know either $\theta_l^n(k\delta t)=\theta^n_j((k-1)\delta t)$ for some $1\leq j\leq n$, or $\theta_l^n(k\delta t)=\Theta(\theta^{M,n}_j,z_l,k\delta t)$.
 In the former case, we have $|\theta_l^n(k\delta t)|\leq \max_j|\theta^n_j((k-1)\delta t)$. In the latter case, we have $|\theta_l^n(k\delta t)|\leq\sup|\Theta|$. 
Here $\sup$ is taken over the set $\{(w,z,t):|w|\leq M',\,z\in[0,1],\,t\in[0,T]\}$.
 But $\Theta$ is determined via (\ref{dT}) in terms of $Q^{sat}$, hence $\sup|\Theta|$ satisfies a universal bound. 
In any case, we have $\max_j|\theta^n_j(k\delta t)|\leq \max(\max_j|\theta^n_j((k-1)\delta t)|,\sup|\Theta|)$. 
It then follows easily by induction that $||\theta^n||_{L^{\infty}}\leq\max(M',\sup|\Theta|)$.
The bound for $q$ then follows automatically.
\end{proof}
Next we prove the point (ii). Roughly speaking, point (ii) says if a parcel is "strictly" dry, then it will remain dry and go down for a while, the length of time this state lasts depends in a universal way how dry this parcel is. 
\begin{lem}\label{3.2l}
There is a universal constant $\tilde{C}_2>0$, such that for any $\eps>0$, 
if for some interger $k,j$, it holds $\hat{\theta}^n_j(k\delta t)>\Theta(\theta^{M,n}_j,z_{\alpha_{k\delta t}(j)},k\delta t)+\eps$, we have $\hat{\theta}^n_j(l\delta t)=\hat{\theta}^n_j(k\delta t)$, and $\alpha_{l\delta t}(j)\leq\alpha_{k\delta t}(j)$ for any $l$ with $0\leq (l-k)\delta t\leq\frac{\eps}{\tilde{C}_2}$.
\end{lem}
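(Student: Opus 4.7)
I plan to argue by induction on $l\geq k$ with hypothesis $\hat\theta_j^n(l\delta t)=\hat\theta_j^n(k\delta t)$ and $\alpha_{l\delta t}(j)\leq\alpha_{k\delta t}(j)$; the base case $l=k$ is trivial. I choose $\tilde C_2:=2\sup|\partial_t\Theta|$, with the supremum taken over $|w|\leq M'$, $z\in[0,1]$, $t\in[0,T]$, which is universal by Theorem \ref{3.1t}(i) and the smoothness of $Q^{sat}$. For the inductive step, set $p:=\alpha_{l\delta t}(j)$. Combining the inductive equality with $z_p\leq z_{\alpha_{k\delta t}(j)}$ (using $\partial_z\Theta>0$) and the time-Lipschitz bound on $\Theta$ yields the quantitative dryness margin
\begin{equation*}
\hat\theta_j^n(l\delta t)-\Theta\bigl(\theta^{M,n}_j,\,z_p,\,(l+1)\delta t\bigr)\;>\;\eps-(l+1-k)\sup|\partial_t\Theta|\,\delta t\;\geq\;\eps/2\;>\;0,
\end{equation*}
so position $p$ does not belong to the initial wet set $W_n$ of the rearrangement cascade attached to the step $l\delta t\to(l+1)\delta t$ (whose wet sets use $\Theta(\cdot,\cdot,(l+1)\delta t)$, just as at $t=0$).

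Because $\beta_n(p)=p\leq n$ and $p\notin W_n$, the ``moreover'' clause of Lemma \ref{3.1nl} with $k_2=n$ forces alternative (ii), so $\beta_m(p)$ is non-decreasing in $m$. This immediately gives $\alpha_{(l+1)\delta t}(j)=\beta_0(p)\leq p=\alpha_{l\delta t}(j)\leq\alpha_{k\delta t}(j)$. To upgrade this to $\hat\theta_j^n((l+1)\delta t)=\hat\theta_j^n(l\delta t)$, I run a secondary downward induction on $m$ from $n$ to $0$: at each intermediate stage $\beta_m(p)\leq p$ gives $z_{\beta_m(p)}\leq z_p$, and the $\eps/2$ margin together with $\partial_z\Theta>0$ keeps alive the strict inequality $\theta^{n,m}_{\beta_m(p)}>\Theta(\theta^{M,n}_j,z_{\beta_m(p)},(l+1)\delta t)$, so $\beta_m(p)\notin W_m$ at every stage. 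Hence parcel $j$ is never chosen as the jumper $j_*$ in any $W^{\prime}_{m+1}$, and the update rule (\ref{2.2n}) carries its Eulerian value unchanged across each $\sigma_{m+1}$, completing the inductive step.

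The main subtlety I anticipate is purely bookkeeping: keeping track of which time-argument appears inside $\Theta$ in each wet set during one time step (always $(l+1)\delta t$ for the step $l\delta t\to(l+1)\delta t$), and carefully distinguishing the Lagrangian label $j$ from the Eulerian position $\beta_m(p)$ that shifts during the cascade. Once the notation is under control, no new quantitative tool is needed beyond the smoothness of $\Theta$, the monotonicities (\ref{mT}), and the structural Lemma \ref{3.1nl}.
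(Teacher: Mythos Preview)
Your proof is correct and follows essentially the same approach as the paper: induction on $l$ with the same constant $\tilde C_2=2\sup|\partial_t\Theta|$, the same quantitative dryness estimate, and the same conclusion that a dry parcel neither jumps nor changes its $\hat\theta$ within a time step. You are simply more explicit than the paper (which appeals to ``the procedure'') in invoking the ``moreover'' clause of Lemma~\ref{3.1nl} and tracking the intermediate stages $\beta_m(p)$; note that this secondary downward induction is already contained in the proof of that moreover clause, so you could streamline by citing it directly.
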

\begin{proof}
Actually we will see one can take $\tilde{C}_2=2\sup|\partial_t\Theta|$. 
We prove this by induction on $l$. First observe that the statement is trivial if $l=k$. Assume this is true for some $l$ with $(l+1-k)\delta t\leq\frac{\eps}{2\sup|\partial_t\Theta|}$. Need to show this is true also for $l+1$. Using the induction hypothesis, we can calculate
\begin{equation*}
\begin{split}
\hat{\theta}_j^n(l\delta t)=\hat{\theta}_j^n(k\delta t)&>\Theta(\theta_j^{M,n},z_{\alpha_{k\delta t}(j)},k\delta t)+\eps\\
&\geq\Theta(\theta_j^{M,n},z_{\alpha_{l\delta t}(j)},(l+1)\delta t)+\eps-\sup|\partial_t\Theta|(l+1-k)\delta t\\
&\geq\Theta(\theta_j^{M,n},z_{\alpha_{l\delta t}(j)},(l+1)\delta t).
\end{split}
\end{equation*}
The first equality is the induction hypothesis. In the second inequality, we used the induction hypothesis that $\alpha_{l\delta t}(j)\leq\alpha_{k\delta t}(j)$.

Above calculation shows that at time $l\delta t$, the parcel $\alpha_{l\delta t}(j)$ is still "dry" by taking one more time step forward. Hence we know $\alpha_{(l+1)\delta t}=\tilde{\alpha}_{(l+1)\delta t}(\alpha_{l\delta t}(j))\leq\alpha_{l\delta t}(j)$, and $\hat{\theta}^n_j((l+1)\delta t)=\hat{\theta}^n_j(l\delta t)$, from the procedure.
\end{proof}

Now we can deduce point (ii) as a corollary of previous lemma.
\begin{cor}\label{3.3c}
Let $\eps>0$, $n\delta t\leq 1$. Then there exists a universal constant $C_2>0$, such that if $\eps>\frac{C_2}{n}$ and $\hat{\theta}^n(t,z)>\Theta(\theta^{M,n}(z),F^n_t(z),t)+\eps$ for some $t\in[0,T)$, 
we have $\hat{\theta}^n_{t^{\prime}}(z)=\hat{\theta}^n_t(z)$, and $F^n_{t^{\prime}}(z)\leq F^n_t(z)$ holds for any $t^{\prime}-t<\frac{\eps}{C_2}$.
\end{cor}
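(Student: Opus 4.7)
The plan is to reduce the continuous hypothesis at $(t,z)$ to the discrete hypothesis at $(k\delta t, j)$ already handled by Lemma \ref{3.2l}, with all discretisation errors absorbed into the constant $C_2$.

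Fix integers $k$ and $j$ with $k\delta t \le t < (k+1)\delta t$ and $z \in J_j$. From the piecewise-constant (in $z$ and in $t$) definitions of $\theta^n$, $q^n$, and $F^n$, together with the translation structure of each $\tilde F^n_{\delta t}$, we have $\theta^{M,n}(z) = \theta_j^{M,n}$, $\hat\theta^n(t,z) = \hat\theta^n_j(k\delta t)$, and $F^n_t(z)\in J_{\alpha_{k\delta t}(j)}$, so in particular $|F^n_t(z) - z_{\alpha_{k\delta t}(j)}| \le 1/n$. By Theorem \ref{3.1t}(i) and the assumption $n\delta t \le 1$, and by smoothness of $\Theta$ on the universal compact set $\{|w|\le M'\}\times[0,1]\times[0,T]$, we obtain
\[
\bigl|\Theta(\theta_j^{M,n}, F^n_t(z), t) - \Theta(\theta_j^{M,n}, z_{\alpha_{k\delta t}(j)}, k\delta t)\bigr| \le C/n
\]
for a universal constant $C$. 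Taking $C_2 \ge 2C$ and using $\eps > C_2/n$, the hypothesis then upgrades to
\[
\hat\theta^n_j(k\delta t) > \Theta(\theta_j^{M,n}, z_{\alpha_{k\delta t}(j)}, k\delta t) + \eps/2.
\]

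Lemma \ref{3.2l} applied with $\eps/2$ in place of $\eps$ now yields, for every integer $l$ with $0 \le (l-k)\delta t \le \eps/(2\tilde C_2)$, that $\hat\theta^n_j(l\delta t) = \hat\theta^n_j(k\delta t)$ and $\alpha_{l\delta t}(j) \le \alpha_{k\delta t}(j)$. Given $t'\in[t,t+\eps/C_2)$, choose the integer $l$ with $l\delta t \le t' < (l+1)\delta t$; then
\[
(l-k)\delta t \le (t'-t) + \delta t < \eps/C_2 + 1/n \le 2\eps/C_2,
\]
which is at most $\eps/(2\tilde C_2)$ as soon as $C_2 \ge 4\tilde C_2$. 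Translating back to the continuous setting, $\hat\theta^n_{t'}(z)=\hat\theta^n_j(l\delta t)=\hat\theta^n_j(k\delta t)=\hat\theta^n_t(z)$; and because $F^n_t$ and $F^n_{t'}$ act on $J_j$ by translation to $J_{\alpha_{k\delta t}(j)}$ and $J_{\alpha_{l\delta t}(j)}$ respectively, the inequality $\alpha_{l\delta t}(j) \le \alpha_{k\delta t}(j)$ forces $F^n_{t'}(z) \le F^n_t(z)$.

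I expect no conceptual obstacle beyond this bookkeeping: Lemma \ref{3.2l} already supplies the dynamical content, and the work is confined to choosing $C_2$ large enough (depending only on $Q^{sat}$, $T$, and $M'$) to absorb simultaneously the $O(1/n)$ spatial error from $F^n_t(z)\ne z_{\alpha_{k\delta t}(j)}$, the $O(\delta t)$ temporal error from $t\ne k\delta t$ and $t'\ne l\delta t$, and the loss in going from the discrete window $[k\delta t,k\delta t+\eps/(2\tilde C_2)]$ to the continuous interval $[t,t+\eps/C_2)$; the hypothesis $\eps > C_2/n$ is exactly what is needed to make all of these manageable at once.
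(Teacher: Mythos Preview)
Your proposal is correct and follows essentially the same approach as the paper's own proof: locate the discrete indices $(k,j)$, absorb the $O(1/n)$ and $O(\delta t)$ errors from passing between $(t,z)$ and $(k\delta t, z_{\alpha_{k\delta t}(j)})$ into the constant, apply Lemma~\ref{3.2l} with $\eps/2$, and translate the discrete conclusion back. If anything, your version is slightly more careful than the paper's in verifying that the integer $l$ attached to $t'$ actually lands in the window allowed by Lemma~\ref{3.2l}.
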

\begin{proof}
First we can find integers $k,j$, such that $k\delta t\leq t<(k+1)\delta t$, and $z\in J_j$. It then follows from the definition of $\hat{\theta}^n$ that
\begin{equation*}
\begin{split}
\hat{\theta}^n(t,z)&=\hat{\theta}^n_j(k\delta t)>\Theta(\theta^{M,n}(t,z),F_t^n(z),t)+\eps\\
&\geq\Theta(\theta^{M,n}_j,z_{\alpha_{k\delta t}(j)},k\delta t)+\eps-\sup|\partial_z\Theta|n^{-1}-\sup|\partial_t\Theta|\delta t.
\end{split}
\end{equation*}
Since $\delta t\leq n^{-1}$, 
we will have $\eps-\sup|\partial_z\Theta|n^{-1}-\sup|\partial_t\Theta|\delta t>\frac{\eps}{2}$, if $\eps>\frac{C_2^{\prime}}{n}$ for some universal constant $n$. With such a choice of $\eps$, we then have 
$$
\hat{\theta}^n_j(k\delta t)>\Theta(\theta_j^{M,n},z_{\alpha_{k\delta t}(j)},k\delta t)+\frac{\eps}{2}.
$$
Now we can conclude from Lemma \ref{3.2l} that $\hat{\theta}^n_j(l\delta t)=\hat{\theta}^n_j(k\delta t)$, 
and $\alpha_{l\delta t}(j)\leq\alpha_{k\delta t}(j)$ for any integer $l$ with $0\leq(l-k)\delta t\leq\frac{\eps}{C_2^{\prime\prime}}$ for some universal constant $C_2^{\prime\prime}$.
This means precisely that $\hat{\theta}^n(t^{\prime},z)=\hat{\theta}^n(t,z)$  and $F_{t^{\prime}}^n(z)\leq F_t^n(z)$ for any $t^{\prime}-t<\frac{\eps}{C_2^{\prime\prime}}$.
\end{proof}
Next we wish to prove point (iii). For this, we need to establish a lemma which gives control over the total variation of $t\longmapsto F_t^n(z)$ in terms of the absolute bound of $\theta$.
\begin{lem}
There exists a universal constant $C_3^{\prime}>0$, such that for any indices $j\in\{1,2\cdots,n\}$,
\begin{equation*}
\begin{split}
&\sum_{0\leq k\leq\frac{T}{\delta t}}\frac{1}{n}(\alpha_{(k+1)\delta t}(j)-\alpha_{k\delta t}(j))^+\leq C_3^{\prime}||\hat{\theta}^n||_{L^{\infty}((0,T)\times(0,1))},\\
&\sum_{0\leq k\leq\frac{T}{\delta t}}\frac{1}{n}|\alpha_{(k+1)\delta t}(j)-\alpha_{k\delta t}(j)|\leq 2C_3^{\prime}(||\hat{\theta}^n||_{L^{\infty}((0,T)\times(0,1))}+2.
\end{split}
\end{equation*}
\end{lem}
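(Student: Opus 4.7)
The idea is to convert each upward move of a parcel into a quantitative increase of its Lagrangian temperature $\hat{\theta}_j$, and then use the fact that $\hat{\theta}_j$ is monotone in $t$ and uniformly bounded to close a telescoping estimate.

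Fix a Lagrangian label $j$ and a time step $k\delta t\to (k+1)\delta t$, and suppose $\alpha_{(k+1)\delta t}(j)>\alpha_{k\delta t}(j)$, i.e.\ the parcel moves up. Write $e_0=\alpha_{k\delta t}(j)$ for its initial Eulerian index. Since the final Eulerian index exceeds $e_0$, alternative (ii) of Lemma \ref{3.1nl} (applied to the rearrangement of this time step, with initial index $e_0$) is impossible, so alternative (i) holds: there is a unique $l_1$ at which the parcel is lifted, the intermediate position just before the lift is $\beta=\beta_{l_1}(e_0)\leq e_0$, and the final position equals $l_1=\alpha_{(k+1)\delta t}(j)$. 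The key observations are: (a) the push-down steps preceding the lift leave $\hat{\theta}_j$ unchanged, so the temperature just before the lift equals $\hat{\theta}_j(k\delta t)$; (b) the subsequent $\sigma_l$ with $l<l_1$ fix index $l_1$ and hence its $\theta$-value, so that $\hat{\theta}_j((k+1)\delta t)=\Theta(\theta^{M,n}_j,z_{l_1},(k+1)\delta t)$; (c) the wet condition at the moment of lift gives $\hat{\theta}_j(k\delta t)<\Theta(\theta^{M,n}_j,z_\beta,(k+1)\delta t)$.

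Combining (a)--(c) with the uniform lower bound $c_0:=\inf\partial_z\Theta>0$ from (\ref{mT}):
\[
\hat{\theta}_j((k+1)\delta t)-\hat{\theta}_j(k\delta t)\ >\ \Theta(\theta^{M,n}_j,z_{l_1},\cdot)-\Theta(\theta^{M,n}_j,z_\beta,\cdot)\ \geq\ c_0\,\frac{l_1-\beta}{n}\ \geq\ c_0\,\frac{(l_1-e_0)^+}{n},
\]
where the last step uses $\beta\leq e_0$. When the parcel does not move up, the right-hand side above is zero while the left-hand side is still nonnegative by Corollary \ref{2.2c}(iii). Hence the inequality
\[
\frac{1}{n}\bigl(\alpha_{(k+1)\delta t}(j)-\alpha_{k\delta t}(j)\bigr)^+\ \leq\ \frac{1}{c_0}\bigl(\hat{\theta}_j((k+1)\delta t)-\hat{\theta}_j(k\delta t)\bigr)
\]
holds for every $k$. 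Summing in $k$ telescopes the right-hand side into $\hat{\theta}_j(T_{\max})-\hat{\theta}_j(0)\leq 2\|\hat{\theta}^n\|_{L^\infty}$, which proves the first estimate with $C_3'=2/c_0$.

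For the second estimate I would use the identity $|x|=x^++x^-=2x^+-x$ and note that
\[
\sum_k \bigl(\alpha_{(k+1)\delta t}(j)-\alpha_{k\delta t}(j)\bigr)=\alpha_{\text{final}}(j)-j\in[-(n-1),n-1],
\]
so dividing by $n$ contributes at most $1$ in absolute value. Therefore
\[
\sum_k \frac{1}{n}\bigl|\alpha_{(k+1)\delta t}(j)-\alpha_{k\delta t}(j)\bigr|\ \leq\ 2\sum_k \frac{1}{n}\bigl(\alpha_{(k+1)\delta t}(j)-\alpha_{k\delta t}(j)\bigr)^+ + 1\ \leq\ 2C_3'\|\hat{\theta}^n\|_{L^\infty}+1,
\]
which is bounded by $2C_3'(\|\hat{\theta}^n\|_{L^\infty}+2)$ after enlarging $C_3'$ if necessary. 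The only delicate point is step (a)--(c): one must verify carefully, using the precise formulas (\ref{2.2nn})--(\ref{2.2n}) and the conclusion of Lemma \ref{3.1nl} (``the parcel is lifted at most once, stays put afterwards''), that the temperature just before the lift has not yet been altered and that no later $\sigma_l$ modifies the lifted value; everything else is telescoping.
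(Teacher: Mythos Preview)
Your proof is correct and follows essentially the same approach as the paper: bound each upward displacement by the corresponding increment of $\hat\theta_j$ via the wet condition and the strict $z$-monotonicity of $\Theta$, telescope, and then deduce the total variation bound from the positive variation via $|x|=2x^+-x$. The only difference is cosmetic: you route the estimate through the intermediate position $\beta$ just before the lift, whereas the paper observes more directly (using the ``moreover'' part of Lemma~\ref{3.1nl}) that the parcel is already wet at its starting position $\alpha_{k\delta t}(j)$, which yields the same inequality without introducing $\beta$.
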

\begin{proof}
First we observe that the second estimate follows from the first. Indeed, we just need to notice
\begin{equation*}
\begin{split}
&\sum_{0\leq k\leq\frac{T}{\delta t}}\frac{1}{n}|\alpha_{(k+1)\delta t}(j)-\alpha_{k\delta t}(j)|\\
&=\sum_{0\leq k\leq\frac{T}{\delta t}}\frac{2}{n}(\alpha_{(k+1)\delta t}(j)-\alpha_{k\delta t}(j))^+-\sum_{0\leq k\leq\frac{T}{\delta t}}\frac{1}{n}(\alpha_{(k+1)\delta t}(j)-\alpha_{k\delta t}(j))\\
&\leq 2C_3^{\prime}||\hat{\theta}^n||_{L^{\infty}((0,T)\times(0,1))}+2.
\end{split}
\end{equation*}
Now we only need to focus on the first estimate. 
Fix some $k$ such that $\alpha_{(k+1)\delta t}(j)>\alpha_{k\delta t}(j)$. Then we know $\alpha_{(k+1)\delta t}(j)=\tilde{\alpha}_{(k+1)\delta t}(\alpha_{k\delta t}(j))>\alpha_{k\delta t}(j)$. This means the parcel $\alpha_{k\delta t}(j)$ is "wet" at $k\delta t$,
 or $\hat{\theta}_j^n(k\delta t)<\Theta(\theta^{M,n}_j,z_{\alpha_{k\delta t}(j)},(k+1)\delta t)$. 
After the time step, we know $\hat{\theta}^n_j((k+1)\delta t)=\Theta(\theta_j^{M,n},z_{\alpha_{(k+1)\delta t}(j)},(k+1)\delta t)$. Therefore
\begin{equation}\label{3.1}
\begin{split}
\hat{\theta}_j^n((k+1)\delta t)-&\hat{\theta}_j^n(k\delta t)\geq\Theta(\theta_j^{M,n},z_{\alpha_{(k+1)\delta t}(j)},(k+1)\delta t)-\Theta(\theta_j^{M,n},z_{\alpha_{k\delta t}(j)},(k+1)\delta t)\\
&\geq(\inf\partial_z\Theta)(z_{\alpha_{(k+1)\delta t}(j)}-z_{\alpha_{k\delta t}(j)})^+=\frac{1}{n}(\inf\partial_z\Theta)(\alpha_{(k+1)\delta t}(j)-\alpha_{k\delta t}(j))^+.
\end{split}
\end{equation}
Now we sum (\ref{3.1}) over $k$, the first estimate follows.
\end{proof}
Then we can deduce point (iii) as a corollary
\begin{lem}
For any $z\in[0,1)$
\begin{equation*}
\begin{split}
&PV_{t\in[0,T)}(F_t^n(z))\leq C_3^{\prime}||\hat{\theta}^n||_{L^{\infty}((0,T)\times(0,1))}.\\
&TV_{t\in[0,T)}(F_t^n(z))\leq 2C_3^{\prime}||\hat{\theta}^n||_{L^{\infty}((0,T)\times(0,1))}+2.
\end{split}
\end{equation*}
Here $PV$ denotes the positve variation, and $TV$ denotes the total variation. $C_3^{\prime}$ is the same constant as in previous lemma.
\end{lem}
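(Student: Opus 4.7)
The plan is straightforward: translate the statements about $F^n_t(z)$ into statements about the discrete permutations $\alpha_{k\delta t}$, and then apply the previous lemma term-by-term.

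First I would record the explicit structure of the flow map. By construction, $\tilde{F}^n_{\delta t}$ shifts $J_i$ rigidly onto $J_{\beta_0(i)}$, and the iterated composition $F^n_{k\delta t} = \tilde{F}^n_{k\delta t}\circ\cdots\circ\tilde{F}^n_{\delta t}$ is likewise a rigid translation on each interval $J_j$ with image $J_{\alpha_{k\delta t}(j)}$. Hence for every $z\in J_j$,
\begin{equation*}
F^n_{k\delta t}(z) - z \;=\; \tfrac{1}{n}\bigl(\alpha_{k\delta t}(j)-j\bigr),
\end{equation*}
and in particular
\begin{equation*}
F^n_{(k+1)\delta t}(z) - F^n_{k\delta t}(z) \;=\; \tfrac{1}{n}\bigl(\alpha_{(k+1)\delta t}(j)-\alpha_{k\delta t}(j)\bigr).
\end{equation*}

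Next I would note that $t\mapsto F^n_t(z)$ is piecewise constant on $[0,T)$, taking the value $F^n_{k\delta t}(z)$ on $[k\delta t,(k+1)\delta t)$, with $F^n_0(z)=z$ (corresponding to $\alpha_0=\mathrm{id}$). Therefore the total variation over $[0,T)$ is exactly the sum of the jump magnitudes, and the positive variation is the sum of the positive parts of those jumps:
\begin{align*}
TV_{t\in[0,T)}\bigl(F^n_t(z)\bigr) &= \sum_{0\leq k\leq T/\delta t}\tfrac{1}{n}\bigl|\alpha_{(k+1)\delta t}(j)-\alpha_{k\delta t}(j)\bigr|,\\
PV_{t\in[0,T)}\bigl(F^n_t(z)\bigr) &= \sum_{0\leq k\leq T/\delta t}\tfrac{1}{n}\bigl(\alpha_{(k+1)\delta t}(j)-\alpha_{k\delta t}(j)\bigr)^+.
\end{align*}

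Finally I would invoke the previous lemma, which gives precisely the two bounds
\begin{equation*}
\sum_k \tfrac{1}{n}\bigl(\alpha_{(k+1)\delta t}(j)-\alpha_{k\delta t}(j)\bigr)^+ \leq C_3'\|\hat\theta^n\|_{L^\infty},\qquad
\sum_k \tfrac{1}{n}\bigl|\alpha_{(k+1)\delta t}(j)-\alpha_{k\delta t}(j)\bigr| \leq 2C_3'\|\hat\theta^n\|_{L^\infty}+2,
\end{equation*}
yielding the desired inequalities. Since these bounds depend on $z$ only through the index $j$ with $z\in J_j$, they hold uniformly in $z\in[0,1)$. There is no real obstacle here: the entire content is the identification of jump sizes of $F^n$ with $\alpha$-increments, which was already implicit in the rigid-translation construction of $\tilde F^n_{\delta t}$, and the universal constant $C_1$ from point (i) of Theorem \ref{3.1t} absorbs $\|\hat\theta^n\|_{L^\infty}$ into a purely data-dependent quantity if one wishes to state the bound without reference to $\hat\theta^n$.
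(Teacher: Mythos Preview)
Your proposal is correct and follows essentially the same approach as the paper: identify the index $j$ with $z\in J_j$, observe that the piecewise-constant map $t\mapsto F^n_t(z)$ has jumps exactly equal to $\tfrac{1}{n}(\alpha_{(k+1)\delta t}(j)-\alpha_{k\delta t}(j))$, and then invoke the previous lemma. Your write-up is in fact more detailed than the paper's, which simply writes down the two equalities for $PV$ and $TV$ without spelling out the rigid-translation structure.
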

\begin{proof}
Find indices $j\in\{1,2\cdots,n\}$ such that $z\in J_j$, then 
\begin{equation*}
\begin{split}
&PV_{t\in[0,T)}(F_t^n(z))=\sum_{0\leq k\leq\frac{T}{\delta t}}(z_{\alpha_{(k+1)\delta t}(j)}-z_{\alpha_{k\delta t}(j)})^+=\sum_{0\leq k\leq\frac{T}{\delta t}}\frac{1}{n}(\alpha_{(k+1)\delta t}(j)-\alpha_{k\delta t}(j))^+\\
&TV_{t\in[0,T)}(F_t^n(z))=\sum_{0\leq k\leq\frac{T}{\delta t}}|z_{\alpha_{(k+1)\delta t}(j)}-z_{\alpha_{k\delta t}(j)}|=\sum_{0\leq k\leq\frac{T}{\delta t}}\frac{1}{n}|\alpha_{(k+1)\delta t}(j)-\alpha_{k\delta t}(j)|.
\end{split}
\end{equation*}
\end{proof}
It only remains to prove the point (iv) and (v). For this we need the following key lemma, which concludes that any given parcel can only be overtaken at a finite rate.
\begin{lem}
Fix $j_0\in\{1,2,\cdots,n\}$. Let $0\leq k<l\leq\frac{T}{\delta t}$. Define the set
$$
J_{k,l}=\{j\in\{1,2,\cdots,n\}:\alpha_{k\delta t}(j)<\alpha_{k\delta t}(j_0),\,\alpha_{l\delta t}(j)>\alpha_{l\delta t}(j_0)\}.
$$
Then there exists a universal constant $C_4^{\prime}>0$, such that if $n\delta t<\frac{1}{C_4^{\prime}}$, we have 
$$
\#J\leq 2(l-k).
$$
\end{lem}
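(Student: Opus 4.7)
The plan is to reduce the bound to a per-time-step count. By Lemma~\ref{2.3l}, each parcel $j\in J_{k,l}$ crosses $j_0$ from below to above at a uniquely determined time step $m(j)\in\{k+1,\ldots,l\}$ and remains above $j_0$ thereafter, so $\#J_{k,l}=\sum_{m=k+1}^{l} c_m$, with $c_m$ counting the overtakings at step $m$. Applying Lemma~\ref{3.4nl} to $j_0$ itself within each step, there are two cases: either $j_0$ performs a single upward jump (case A), in which case no parcel from below crosses $j_0$ and $c_m=0$, or $j_0$'s position is non-increasing through the sub-rearrangements $\sigma_n,\sigma_{n-1},\ldots,\sigma_1$ of the step (case B). In case B, every unit of downward displacement of $j_0$ is produced by some $\sigma_k$ picking a parcel $j'$ at a position below $j_0$'s current position and sending it to a position $\geq$ (current position of $j_0$); Lemma~\ref{2.3l} further forces $\theta^{M,n}_{j'}>\theta^{M,n}_{j_0}$. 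Hence $c_m=\alpha_{(m-1)\delta t}(j_0)-\alpha_{m\delta t}(j_0)$, and it suffices to prove that $c_m\leq 2$ for every $m$.

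To establish $c_m\leq 2$ I would exploit the small-$n\delta t$ hypothesis in two ways. First, Lemma~\ref{3.4nl} applied to each jumper tells us that the jumper sits at its start-of-step position at the moment of its jump, so any candidate second jumper within the same step originates from a position not yet touched by an earlier $\sigma$. Second, by essentially the computation behind Lemma~\ref{3.4nl}, the wet-$\theta^M$-range at a fixed position has width comparable to $\delta t\cdot\sup|\partial_t\Theta|$, while a one-index shift of the $z$-coordinate changes the saturation threshold by $(1/n)\inf|\partial_z\Theta|$; choosing $C_4'$ large enough that $n\delta t<1/C_4'$ forces the latter to strictly dominate, which makes every parcel shifted downward by a prior $\sigma$ in the same step automatically dry. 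Combining these with the eligibility conditions in the definition of $W_k'$ (the chosen jumper must have the largest $\theta^M$ among eligible wet parcels and must ``beat'' all dry parcels above it up to its target position $k$), a case analysis on where a second overtaker can be located—necessarily from a still-fresh position strictly below the first jumper's initial slot $j^{(1)}_*$, with target index still $\geq$ (current position of $j_0$)—shows that at most one additional overtake can occur. Summing $c_m\leq 2$ over $m\in\{k+1,\ldots,l\}$ yields $\#J_{k,l}\leq 2(l-k)$.

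The main obstacle is making the per-step bound $c_m\leq 2$ in case B fully rigorous. The combinatorics of $W_k$ and $W_k'$ as the sub-rearrangements act in decreasing order of $k$, coupled with the cascading dryness of shifted parcels forced by the small-$n\delta t$ assumption, have to be tracked carefully; in particular the interaction between the eligibility rule (largest $\theta^{M}$ wins) and the requirement that the jumper beat every dry parcel above it up to $k$ is what ultimately forbids a third overtake. The factor $2$ (rather than $1$) arises because although the first jumper rigidifies the configuration on its shift window $(j^{(1)}_*,k^{(1)}]$, a single eligible wet parcel at a fresh position strictly below $j^{(1)}_*$ can still slip in and overtake $j_0$ a second time before eligibility collapses entirely.
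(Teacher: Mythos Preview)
Your overall plan matches the paper's: decompose $J_{k,l}$ into per-step overtakings (the paper's sets $A_s$), reduce to case B where $j_0$ does not jump, and prove $\#A_s\le 2$. Your identification $c_m=\alpha_{(m-1)\delta t}(j_0)-\alpha_{m\delta t}(j_0)$ in case B is correct, and you correctly invoke Lemma~\ref{3.4nl} both to place each jumper at its start-of-step slot and to make any down-shifted parcel dry.

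The genuine gap is in the per-step bound itself, and it is more than bookkeeping. Your sketch does not isolate the mechanism that distinguishes two overtakings from three. The paper's argument proceeds by labeling the three would-be overtakers $j_1,j_2,j_3$ in the order they jump (so $k_1>k_2>k_3$), and first proves the monotonicity $\theta^{M,n}_{j_3}\le\theta^{M,n}_{j_2}\le\theta^{M,n}_{j_1}$; this is not immediate from the ``largest $\theta^M$ wins'' rule, because a later jumper competes in a different $W_{k}'$ than an earlier one. The paper obtains it by contradiction: if $\theta^{M,n}_{j_2}>\theta^{M,n}_{j_1}$, one shows $\beta_{k_1}(\alpha_{s\delta t}(j_2))\in W_{k_1}'$, so $j_1$ could not have been selected at stage $k_1$. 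This step requires tracking how the window $(\beta_{k_1}(\alpha_{s\delta t}(j_1)),k_1]$ is frozen between $\sigma_{k_1}$ and $\sigma_{k_2}$, which your sketch alludes to but does not carry out.

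The second missing idea is the actual obstruction to a third jump. Let $j_4$ be the parcel at position $\beta_{k_1}(\alpha_{s\delta t}(j_1))+1$; it becomes dry once overtaken by $j_1$ and stays dry. After $j_1$ and $j_2$ have jumped, $j_4$ has been pushed down by \emph{two} indices, so $\beta_{k_3}(\alpha_{s\delta t}(j_4))\le\beta_{k_1}(\alpha_{s\delta t}(j_1))-1$. Combining this two-step downshift with $\theta^{M,n}_{j_3}\le\theta^{M,n}_{j_1}$ and the quantitative comparison $(\inf\partial_z\Theta)/n>(\sup|\partial_t\Theta|)\delta t$ yields $\Theta(\theta^{M,n}_{j_3},z_{\beta_{k_3}(\alpha_{s\delta t}(j_4))},(s+1)\delta t)<\theta^{n,k_3}_{\beta_{k_3}(\alpha_{s\delta t}(j_4))}$, i.e.\ $j_3$ cannot beat the dry parcel $j_4$, so $\beta_{k_3}(\alpha_{s\delta t}(j_3))\notin W_{k_3}'$. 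This is why the bound is $2$ and not $1$: a single downshift of $j_4$ is not enough to block $j_2$, but two downshifts block $j_3$. Your heuristic ``eligibility collapses after one more'' does not capture this; without the $\theta^M$-ordering and the explicit $j_4$ obstruction, the case analysis you describe cannot close.
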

\begin{proof}
We will prove this statement with the choice of constant $C_4^{\prime}=\frac{\sup|\partial_t\Theta|}{\inf\partial_z\Theta}$. Here $\sup$, $\inf$ is taken over the set $\{(w,z,t):|w|\leq M,z\in[0,1],t\in[0,T]\}$.
With this choice, Lemma \ref{3.4nl} applies.
For $k\leq s\leq l-1$, we may define
\begin{equation*}
\begin{split}
A_s=\{j\in\{1,2,\cdots,n\}:\alpha_{s^{\prime}\delta t}(j)&<\alpha_{s^{\prime}\delta t}(j_0),\textrm{ for any $s^{\prime}$ with $k\leq s^{\prime}\leq s$, and}\\
&\alpha_{(s+1)\delta t}(j)>\alpha_{(s+1)\delta t}(j_0)\}.
\end{split}
\end{equation*}
Then we have $J_{k,l}=\cup_{s=k}^{l-1}A_s$. That $J_{k,l}\subset\cup_{s=k}^{l-1}A_s$ is clear and the reverse inclusion follows from Lemma \ref{2.3l}. Therefore it suffices to show $\#A_s\leq 2$ for each $s$, 
when $n\delta t$ is small. 
Here we use the notation of section 2 and write $\tilde{\alpha}_{(s+1)\delta t}=\sigma_1\circ\cdots\circ\sigma_n$.
 Here $\sigma_k$ is the rearrangement map of the indices when we decide which parcel moves to $z_k$. 
Denote $\beta_k=\sigma_{k+1}\circ\cdots\circ\sigma_n$.
Without loss of generality,
 we may assume that between time step $s\delta t$ and $(s+1)\delta t$, the second alternative of Lemma \ref{3.4nl} holds for $\alpha_{s\delta t}(j_0)$. 
Indeed, if the first alternative holds, we will have $A_s=\emptyset$, and there is nothing to prove in this case.

If $\#A_s\leq 2$ fails, 
then let $k_1>k_2>k_3$ be the 3 biggest integers for which there exists some index $j$ such that $\alpha_{s\delta t}(j)<\alpha_{s\delta t}(j_0)$ and $\beta_{k-1}(\alpha_{s\delta t}(j))>\beta_{k-1}(\alpha_{s\delta t}(j_0))$.
Let $j_1,\,j_2,\,j_3$ be the index corresponding to $k_1,\,k_2,\,k_3$. Namely $\alpha_{s\delta t}(j_i)<\alpha_{s\delta t}(j_0)$, but $\beta_{k_i-1}(\alpha_{s\delta t}(j_i))>\beta_{k_i-1}(\alpha_{s\delta t}(j_0))$, $i=1,2,3$.
\sloppy It is clear that $\beta_{k_i}(\alpha_{s\delta t}(j_i))<\beta_{k_i}(\alpha_{s\delta t}(\alpha_{s\delta t}(j_0))$, $1\leq i\leq 3$. It is also clear that such index $j_i$ must be unique since each $\sigma_k$ lifts at most one index. 
From Lemma \ref{3.4nl}, we can see it must hold $\alpha_{s\delta t}(j_3)<\alpha_{s\delta t}(j_2)<\alpha_{s\delta t}(j_1)<\alpha_{s\delta t}(j_0)$.
The last inequality is clear,
 since $\alpha_{s\delta t}(j_0)$ will not jump.
If, say $\alpha_{s\delta t}(j_2)>\alpha_{s\delta t}(j_1)$,
 then $\alpha_{s\delta t}(j_1)$ overtakes $\alpha_{s\delta t}(j_2)$ since $\alpha_{s\delta t}(j_1)$ jumps first(under $\sigma_{k_1}$) and ends up above $\alpha_{s\delta t}(j_0)$. Thus $\alpha_{s\delta t}(j_2)$ will get pushed down by $\sigma_{k_1}$. 
But then according to Lemma \ref{3.4nl}, it cannot jump later on and cannot overtake any other parcel.

The plan is to show $\theta^{M,n}_{j_3}
\leq\theta^{M,n}_{j_2}
\leq\theta^{M,n}_{j_1}$,
 and also show this implies $\beta_{k_3}(\alpha_{s\delta t}(j_3))\notin W_{k_3}^{\prime}$. This will be a contradiction since we will have $\beta_{k_3}(\alpha_{s\delta t}(j_3))$ cannot jump under $\sigma_{k_3}$.

First we observe that for any $k$ with $k_2<k\leq k_1$, 
and any index $j$ with $\beta_{k_1}(\alpha_{s\delta t}(j_1))<\beta_{k_1}(\alpha_{s\delta t}(j))\leq k_1$, it holds $\beta_{k-1}(\alpha_{s\delta t}(j))=\beta_{k_1}(\alpha_{s\delta t}(j))-1$,
while for index $j$ with $\beta_{k_1}(\alpha_{s\delta t}(j))<\beta_{k_1}(\alpha_{s\delta t}(j_1))$, it must hold $\beta_{k-1}(\alpha_{s\delta t}(j))=\beta_{k_1}(\alpha_{s\delta t}(j))$.
Observe it is clear with $k=k_1$. For $k<k_1$, and $j$ satisfying $\beta_{k_1}(\alpha_{s\delta t}(j_1))<\beta_{k_1}(\alpha_{s\delta t}(j))\leq k_1$, they cannot jump under $\sigma_k$ since they are already overtaken by $\alpha_{s\delta t}(j_1)$.  
For $j$ satisfying $\beta_{k_1}(\alpha_{s\delta t}(j))<\beta_{k_1}(\alpha_{s\delta t}(j_1))$,
 they cannot jump up because once they jump up under $\sigma_k$, 
they will jump to $z_k$, hence overtake $\alpha_{s\delta t}(j_0)$, contradicting our choice of $k_2$. They also cannot be pushed down since if this happens, some parcel below needs to jump, again contradicting the choice of $k_2$.

Now we wish to prove $\theta_{j_2}^{M,n}
\leq\theta_{j_1}^{M,n}$.
If not, we will show below that $\beta_{k_1}(\alpha_{s\delta t}(j_2))\in W_{k_1}^{\prime}$.
 This will give us a contradiction since the parcel $\beta_{k_1}(\alpha_{s\delta t}(j_1))$ does not have the largest $\theta^M$ among the parcels in the set $W_{k_1}^{\prime}$,
 hence cannot jump up under $\sigma_{k_1}$.  First it is clear from the "moreover" part of Lemma \ref{3.1nl} that $\beta_{k_1}(\alpha_{s\delta t}(j_2))\in W_{k_1}$.
For any index $j$ with $\beta_{k_1}(\alpha_{s\delta t}(j_2))<\beta_{k_1}(\alpha_{s\delta t}(j))<\beta_{k_1}(\alpha_{s\delta t}(j_1))$,
 and any $k$ with $k_2<k\leq k_1$, we conclude from last paragraph that $\sigma_k=id$, 
and $\theta^{n,k-1}_{\beta_{k-1}(\alpha_{s\delta t}(j))}=\theta^{n,k_1}_{\beta_{k_1}(\alpha_{s\delta t}(j))}$. Since $\beta_{k_2}(\alpha_{s\delta t}(j_2))\in W_{k_2}^{\prime}$, 
and no changes happen for these $j$`s under $\sigma_k$ with $k_2<k\leq k_1$, we see $\beta_{k_1}(\alpha_{s\delta t}(j_2))$ will beat them under rearrangement $\sigma_{k_1}$. Since we assumed $\theta^{M,n}_{j_2}>\theta^{M,n}_{j_1}$, we know $\alpha_{s\delta t}(j_2)$ beats $\alpha_{s\delta t}(j_1)$ as well.
Now consider index $j$ satisfying $\beta_{k_1}(\alpha_{s\delta t}(j_1))<\beta_{k_1}(\alpha_{s\delta t}(j))\leq k_1$, if $\beta_{k_1}(\alpha_{s\delta t}(j))\in W_{k_1}$,
it can be beaten by $\beta_{k_1}(\alpha_{s\delta t}(j_1))$, which means $\theta^{M,n}_j<\theta^{M,n}_{j_1}$. 
 Since $\theta^{M,n}_{j_2}>\theta^{M,n}_{j_1}$, 
it can also be beaten by $\alpha_{s\delta t}(j_2)$.
If $\beta_{k_1}(\alpha_{s\delta t}(j))\notin W_{k_1}$, we know from Lemma \ref{3.1nl} that $\beta_{k_2}(\alpha_{s\delta t}(j))\notin W_{k_2}$. Hence from $\beta_{k_2}(\alpha_{s\delta t}(j_2))\in W_{k_2}^{\prime}$, we see $\theta^{n,k_2}_{\beta_{k_2}(\alpha_{s\delta t}(j))}<\Theta
(\theta_{j_2}^{M,n},z_{\beta_{k_2}(\alpha_{s\delta t}(j))},(s+1)\delta t).$
Since they are "dry" parcels, we know their $\theta$ does not change, namely $\theta^{n,k_2}_{\beta_{k_2}(\alpha_{s\delta t}(j))}=\theta^{n,k_1}_{\beta_{k_1}(\alpha_{s\delta t}(j))}$, 
also we know from the observation made in previous paragraph with $k=k_2+1$ that $\beta_{k_2}(\alpha_{s\delta t}(j))=\beta_{k_1}(\alpha_{s\delta t}(j)-1$. 
Hence they will be beaten by $\alpha_{s\delta t}(j_2)$ in the rearrangement $\sigma_{k_1}$.
 This shows $\beta_{k_2}(\alpha_{s\delta t}(j_2))\in W_{k_1}^{\prime}$.

By the same argument as above, one can also conclude $\theta^{M,n}_{j_3}
\leq\theta^{M,n}_{j_2}$, following the same logic that if $\theta_{j_3}^{M,n}>\theta^{M,n}_{j_2}$, we will then conclude $\beta_{k_2}(\alpha_{s\delta t}(j_3))\in W_{k_2}^{\prime}$, thus $\alpha_{s\delta t}(j_2)$ will not jump under $\sigma_{k_2}$. So we have shown $\theta^{M,n}_{j_3}
\leq\theta^{M,n}_{j_2}
\leq\theta^{M,n}_{j_1}$.

Next we show $\beta_{k_3}(\alpha_{s\delta t}(j_3))\notin W_{k_3}^{\prime}$. Let $j_4$ be the index such that $\beta_{k_1}(\alpha_{s\delta t}(j_4))=\beta_{k_1}(\alpha_{s\delta t}(j_1))+1$. Since it is overtaken by $j_1$ under $\sigma_{k_1}$ 
it will remain "dry" for all later rearrangements, that is, $\beta_m(\alpha_{s\delta t}(j_4))\notin W_m$, 
for any $1\leq m< k_1$. If $\beta_{k_3}(\alpha_{s\delta t}(j_3))\in W_{k_3}^{\prime}$, 
in particular, one should have $\Theta(\theta_{j_3}^{M,n},z_{\beta_{k_3}(\alpha_{s\delta t}(j_4))},(s+1)\delta t)>\theta_{\beta_{k_3}(\alpha_{s\delta t}(j_4))}^{n,k_3}=\theta_{\beta_{k_1}(\alpha_{s\delta t}(j_4))}^{n,k_1}$. On the other hand, using Corollary \ref{2.2c}, point (i), (iv), we see
\begin{equation}\label{3-2nn}
\theta_{\beta_{k_1}(\alpha_{s\delta t}(j_4))}^{n,k_1}\geq\theta^{n,k_1}_{\beta_{k_1}(\alpha_{s\delta t}(j_1))}\geq\Theta(\theta_{j_1}^{M,n},
z_{\beta_{k_1}(\alpha_{s\delta t}(j_1))},s\delta t).
\end{equation}
Observe that $\beta_{k_3}(\alpha_{s\delta t}(j_4))\leq\beta_{k_1}(\alpha_{s\delta t}(j_4))-2=\beta_{k_1}(\alpha_{s\delta t}(j_1))-1$, since there are at least 2 parcels($j_1$ and $j_2$) overtaking $j_4$. Therefore
\begin{equation}\label{3-3}
\begin{split}
\theta_{\beta_{k_1}(\alpha_{s\delta t}(j_4))}^{n,k_1}<\Theta(\theta_{j_3}^{M,n},&z_{\beta_{k_3}(\alpha_{s\delta t}(j_4))},(s+1)\delta t)\leq\Theta(\theta^{M,n}_{j_3},z_{\beta_{k_1}(\alpha_{s\delta t}(j_1))-1},(s+1)\delta t)\\
&\leq\Theta(\theta_{j_1}^{M,n},z_{\beta_{k_1}({\alpha_{s\delta t}(j_1)})},s\delta t)-(\inf\partial_z\Theta)n^{-1}+(\sup|\partial_t\Theta|)\delta t.
\end{split}
\end{equation}
In the second inequality above, we used above observation, and in the third inequality we used that $\theta^{M,n}_{j_3}\leq
\theta^{M,n}_{j_2}\leq\theta^{M,n}_{j_1}$. Now combining (\ref{3-2nn}) and (\ref{3-3}) gives a contradiction.
\end{proof}

From above discrete estimate, we can get the lemma of "finite speed of penetration".
\begin{lem}\label{3.7l}
Let $z_{i_0}\in[0,1)$. Choose $n,\,\delta t$ such that $n\delta t=\frac{1}{2C_4^{\prime}}$, where $C_4^{\prime}$ is the constant given by previous lemma. Let $t_0\in[0,T)$, $\eps>0$, define the set
$$
J=\{z\in[0,1):F_{t_0}(z)<F_{t_0}(z_{i_0}), F_{t_0+\eps}(z)>F_{t_0+\eps}(z_{i_0})\}.
$$
Then $\mathcal{L}^1(J)\leq C(\eps+\delta t)$, for some universal constant $C$.
\end{lem}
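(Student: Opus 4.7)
The plan is to reduce the claim to the preceding discrete lemma via the piecewise-translation structure of $F^n$. Since $F^n_t$ is constant in $t$ on each interval $[k\delta t,(k+1)\delta t)$, choose integers $k_0,l_0$ with $k_0\delta t\le t_0<(k_0+1)\delta t$ and $l_0\delta t\le t_0+\eps<(l_0+1)\delta t$, so that $l_0-k_0\le \eps/\delta t+1$, and note that $F^n_{t_0}=F^n_{k_0\delta t}$ and $F^n_{t_0+\eps}=F^n_{l_0\delta t}$.

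Next I translate the defining conditions of $J$ into conditions on the discrete rearrangement indices. Recall that $F^n_{k\delta t}$ maps each $J_j=[(j-1)/n,j/n)$ by a translation onto $J_{\alpha_{k\delta t}(j)}$; writing $z_{i_0}=i_0/n\in J_{j'_0}$ for $j'_0:=i_0+1$, a direct computation gives $F^n_{k\delta t}(z_{i_0})=z_{\alpha_{k\delta t}(j'_0)-1}$. For any $z\in J_j$ with $j\ne j'_0$ one then checks
$$
F^n_{k\delta t}(z)<F^n_{k\delta t}(z_{i_0})\iff \alpha_{k\delta t}(j)<\alpha_{k\delta t}(j'_0),
$$
and analogously for the reversed strict inequality; for $j=j'_0$ neither strict inequality holds (since $z\ge z_{i_0}$ on $J_{j'_0}$). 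Consequently, up to a null set,
$$
J=\bigsqcup_{j\in J_{k_0,l_0}}J_j,
$$
where $J_{k_0,l_0}$ is exactly the set appearing in the preceding lemma with $j_0$ taken to be $j'_0$.

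The hypothesis $n\delta t=1/(2C_4')$ ensures $n\delta t<1/C_4'$, so that lemma applies and yields $\#J_{k_0,l_0}\le 2(l_0-k_0)\le 2(\eps/\delta t+1)$. Therefore
$$
\mathcal{L}^1(J)=\frac{\#J_{k_0,l_0}}{n}\le \frac{2\eps}{n\delta t}+\frac{2}{n}=4C_4'\eps+4C_4'\delta t,
$$
where in the last step I used $1/n=2C_4'\delta t$. This gives the claim with $C=4C_4'$.

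The only delicate point in this argument is getting the discrete-to-continuous dictionary right at the boundary point $z_{i_0}=i_0/n$, which sits at the interface of $J_{i_0}$ and $J_{i_0+1}$; once one selects $j'_0=i_0+1$ and respects the half-open conventions in $J_j$, the rest is bookkeeping and a direct appeal to the preceding lemma, which carries all the real work.
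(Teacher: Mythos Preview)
Your proof is correct and follows essentially the same route as the paper: identify the time indices $k,l$ with $k\delta t\le t_0<(k+1)\delta t$ and $l\delta t\le t_0+\eps<(l+1)\delta t$, recognize that $J=\bigcup_{j\in J_{k,l}}J_j$ for the index set of the preceding lemma, and finish with the arithmetic $\mathcal{L}^1(J)=n^{-1}\#J_{k,l}\le 2(l-k)/n\le 4C_4'(\eps+\delta t)$. Your treatment is in fact more careful than the paper's, which asserts the identification $J=\cup_{j\in J_{k,l}}J_j$ without spelling out the boundary bookkeeping at $z_{i_0}$ that you verify.
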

\begin{proof}
Choose $j_0\in\{1,2,\cdots,n\}$, such that $z_{i_0}\in J_{j_0}$. 
Choose integers $k,l$ such that $k\delta t\leq t_0<(k+1)\delta t$, $l\delta t\leq t_0+\eps<(l+1)\delta t$. Then we know that $J=\cup_{j\in J_{k,l}}J_j$. Here $J_{k,l}$ is defined as in previous lemma. Hence 
$$
\mathcal{L}^1(J)=n^{-1}\#J_{k,l}\leq\frac{2(l-k)}{n}=\frac{2(l-k)\delta t}{n\delta t}\leq4C_4^{\prime}(\eps+\delta t).
$$
\end{proof}
As an application of this lemma, we can prove point (iv) of Theorem \ref{3.1t}.

\begin{prop}
Let $n,\,\delta t$ be chosen as in Lemma \ref{3.7l}. There exists a universal constant $C>0$, such that for any $\eps>0$, and $t\in[0,T)$, with $[t-\eps,t+\eps]\subset[0,T)$, we have
\begin{equation}
|\hat{\theta}^n_{t+\eps}(z)-\hat{\theta}^n_{t-\eps}(z)-\big(\Theta(\theta^{M,n}(z),F^n_{t+\eps}(z),t+\eps)-\Theta(\theta^{M,n}(z),F^n_{t-\eps}(z),t-\eps)\big)^+|\leq C(\eps+\delta t).
\end{equation}
\end{prop}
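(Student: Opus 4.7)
Fix $z\in[0,1)$ with $z\in J_j$, write $w:=\theta^{M,n}(z)$ (conserved along the parcel by Corollary~\ref{2.2c}(ii)), and set $\Phi(s):=\Theta(w,F^n_s(z),s)$. Denote $A:=\hat{\theta}^n_{t+\eps}(z)-\hat{\theta}^n_{t-\eps}(z)$ and $B:=\Phi(t+\eps)-\Phi(t-\eps)$; by Corollary~\ref{2.2c}(iii) we have $A\ge 0$, so the target $|A-B^+|\le C(\eps+\delta t)$ will follow from $|A-B|\le C(\eps+\delta t)$ in the ``jump'' case, and from a direct bound on $B^+$ in the ``no-jump'' case. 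The plan is to split according to whether parcel $j$ is lifted at some discrete transition inside $(t-\eps,t+\eps]$.

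\textbf{No-jump case.} Formula (\ref{2.2n}) shows $\hat{\theta}^n_s(z)$ changes only when $j$ is lifted, so $A=0$; also $s\mapsto F^n_s(z)$ is non-increasing on $[t-\eps,t+\eps]$. Monotonicity of $\Theta$ in $z$ combined with a Lipschitz estimate in $t$ gives
$$B\le\Theta(w,F^n_{t-\eps}(z),t+\eps)-\Theta(w,F^n_{t-\eps}(z),t-\eps)\le 2\eps\sup|\partial_t\Theta|,$$
so $B^+\le C\eps$ and $|A-B^+|\le C\eps$.

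\textbf{Jump case.} Let $\tau\in(t-\eps,t+\eps]$ be the latest lift time for $j$. Then $\hat{\theta}^n_{t+\eps}(z)=\hat{\theta}^n_\tau(z)$, and (\ref{2.2n}) gives $\hat{\theta}^n_\tau(z)=\Theta(w,z_{\alpha_\tau(j)},\tau)$, which differs from $\Phi(\tau)$ by $\sup|\partial_z\Theta|/n=O(\delta t)$ since $F^n_\tau(z)\in J_{\alpha_\tau(j)}$. Lemma~\ref{3.7l} applied on $[\tau,t+\eps]$ bounds the downward displacement, $0\le F^n_\tau(z)-F^n_{t+\eps}(z)\le C(\eps+\delta t)$, hence $|\Phi(\tau)-\Phi(t+\eps)|\le C(\eps+\delta t)$, and combining,
$$\hat{\theta}^n_{t+\eps}(z)=\Phi(t+\eps)+O(\eps+\delta t).$$
For the left endpoint, Corollary~\ref{2.2c}(iv) gives $\hat{\theta}^n_{t-\eps}(z)\ge\Phi(t-\eps)-C\delta t$. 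For the matching upper bound I apply the contrapositive of Corollary~\ref{3.3c} with $\eps':=\max(2C_2\eps,C_2/n)$: an excess $\hat{\theta}^n_{t-\eps}(z)>\Phi(t-\eps)+\eps'$ would keep the parcel strictly dry, and $\hat{\theta}^n$ constant, on an interval of length $\eps'/C_2\ge 2\eps$ after $t-\eps$, contradicting the lift at $\tau\le t-\eps+2\eps$. Thus $\hat{\theta}^n_{t-\eps}(z)=\Phi(t-\eps)+O(\eps+\delta t)$, and subtracting the endpoint estimates yields $A=B+O(\eps+\delta t)$; a short sign analysis on $B$ (using $A\ge 0$) upgrades this to $|A-B^+|\le C(\eps+\delta t)$.

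The main obstacle is the upper bound on $\hat{\theta}^n_{t-\eps}(z)$ in the jump case, which without Corollary~\ref{3.3c} could fail badly: a strongly-dry parcel at $t-\eps$ could still jump within time $2\eps$, forcing $A\gg B^+$. The secondary delicate point is the application of Lemma~\ref{3.7l} on $[\tau,t+\eps]$: without this finite-penetration estimate, post-jump pushdowns could make $\Phi(\tau)$ and $\Phi(t+\eps)$ arbitrarily far apart. Everything else is $O(\delta t)$ bookkeeping stemming from position discretisation ($F^n_\tau(z)\in J_{\alpha_\tau(j)}$ rather than at $z_{\alpha_\tau(j)}$), from $t\pm\eps$ not being exact multiples of $\delta t$, and from the threshold $C_2/n$ in Corollary~\ref{3.3c}.
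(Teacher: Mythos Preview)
Your proof is correct and uses the same three ingredients as the paper's own argument: Corollary~\ref{3.3c} (contrapositive) to force $\hat{\theta}^n_{t-\eps}(z)\le\Phi(t-\eps)+O(\eps+\delta t)$ whenever a lift occurs, Corollary~\ref{2.2c}(iv) for the matching lower bound, and Lemma~\ref{3.7l} to control the post-jump downward displacement $F^n_\tau(z)-F^n_{t+\eps}(z)$. The paper organises the estimate as separate upper and lower bounds on $\kappa=A-B^+$ (splitting the lower bound into $A>0$ versus $A=0$), whereas you split into jump/no-jump; since ``jump'' is exactly $A>0$, the two decompositions coincide and the arguments are essentially the same---your only slip is the non-strict choice $\eps'=\max(2C_2\eps,C_2/n)$, which should be taken marginally larger (e.g.\ $\eps'=2C_2(\eps+\delta t)+2C_2/n$) so that Corollary~\ref{3.3c} applies and its conclusion covers the closed interval up to $t+\eps$.
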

\begin{proof}
Let $\kappa$ be the quantity in the absolute value above. First we show $\kappa\leq C(\eps+\delta t)$. Without loss of generality, we can then assume $\kappa>0$. Then we know that $\hat{\theta}^n_{t+\eps}(z)-\hat{\theta}^n_{t-\eps}(z)\geq\kappa$.
It follows from Corollary \ref{3.3c} that 
\begin{equation}\label{2.7}
\hat{\theta}^n_{t-\eps}(z)\leq\Theta(\theta^{M,n}(z),F^n_{t-\eps}(z),t-\eps)+C\eps.
\end{equation}
On the other hand
\begin{equation}\label{2.8}
\begin{split}
\hat{\theta}^n_{t+\eps}(z)-\hat{\theta}^n_{t-\eps}(z)&\geq\Theta(\theta^{M,n}(z),F^n_{t+\eps}(z),t+\eps)-\Theta(\theta^{M,n}(z),F^n_{t-\eps}(z),t-\eps)+\kappa\\
&\geq\Theta(\theta^{M,n}(z),F^n_{t+\eps}(z),t+\eps)-\hat{\theta}^n_{t-\eps}(z)-\sup|\partial_t\Theta|\delta t+\kappa.
\end{split}
\end{equation}
In the first inequality, we only used the definition of $\kappa$, and in the second inequality, we used Corollary \ref{2.2c}, point (iv).
If we let $t^{\prime}\in[t-\eps,t+\eps]$ be such that $F^n_{t^{\prime}}(z)=\max_{\tau\in[t-\eps,t+\eps]}F^n_{\tau}(z)$, 
we have $\hat{\theta}^n_{t+\eps}(z)\leq\Theta(\theta^{M,n}(z),F^n_{t^{\prime}}(z),t+\eps)+\sup|\partial_t\Theta|\eps.$
Hence it follows from (\ref{2.8}) that
\begin{equation}
\Theta(\theta^{M,n}(z),F_{t^{\prime}}^n(z),t+\eps)+\sup|\partial_t\Theta|\eps\geq\Theta(\theta^{M,n}(z),F_{t+\eps}^n(z),t+\eps)-\sup|\partial_t\Theta|\delta t+\kappa.
\end{equation}
Noticing $\delta t\leq\eps$, we obtain
$$
F_{t^{\prime}}^n(z)-F^n_{t+\eps}(z)\geq\frac{1}{\inf\partial_z\Theta}(\kappa-2\sup|\partial_t\Theta|\eps).
$$
Now consider the set $E^{\prime}=\{z^{\prime}\in[0,1]:F^n_{t^{\prime}}(z^{\prime})<F^n_{t^{\prime}}(z), F^n_{t+\eps}(z^{\prime})>F^n_{t+\eps}(z)\}$. Then we know $\mathcal{L}^1(E^{\prime})\geq\frac{1}{\inf\partial_z\Theta}(\kappa-2\sup|\partial_t\Theta|\eps)$. But it follows from Corollary \ref{3.7l} that $\mathcal{L}^1(E^{\prime})\leq C(\eps+\delta t)$. Hence $\kappa\leq C^{\prime}(\eps+\delta t)$, for some universal constant $C^{\prime}$.

Next we derive a lower bound. We consider two cases. First if $\hat{\theta}^n_{t+\eps}(z)>\hat{\theta}^n_{t-\eps}(z)$, then as has been observed in (\ref{2.7}), we know that 
\begin{equation}\label{3-8}
\hat{\theta}^n_{t+\eps}(z)-\hat{\theta}^n_{t-\eps}(z)\geq\Theta(\theta^{M,n}(z),F^n_{t+\eps}(z),t+\eps)-\Theta(\theta^{M,n}(z),F^n_{t-\eps}(z),t-\eps)-C\eps.
\end{equation}
If $\Theta(\theta^{M,n}(z),F^n_{t+\eps}(z),t+\eps)-\Theta(\theta^{M,n}(z),F^n_{t-\eps}(z),t-\eps)\geq0$, then we can immediately conclude $\kappa\geq-C\eps$. If it is negative, then we can calculate:
\begin{equation}\label{3-9}
\begin{split}
&\Theta(\theta^{M,n}(z),F^n_{t+\eps}(z),t+\eps)-\Theta(\theta^{M,n}(z),F^n_{t-\eps}(z),t-\eps)\\
&\geq-\sup|\partial_z\Theta|(F_{t+\eps}^n(z)-F_{t-\eps}^n(z))^--\sup|\partial_t\Theta|2\eps.
\end{split}
\end{equation}
Define $\tilde{E}=\{z^{\prime}\in[0,1):F^n_{t-\eps}(z^{\prime})<F^n_{t-\eps}(z),\,F^n_{t+\eps}(z^{\prime})>F^n_{t+\eps}(z)\}$. 
Then Lemma \ref{3.7l} shows $\mathcal{L}^1(\tilde{E})\leq C(\eps+\delta t)$. 
But this means $F_{t+\eps}^n(z)-F_{t-\eps}^n(z)\geq-C(\eps+\delta t)$. Thus from (\ref{3-8}), (\ref{3-9}), 
we know $\hat{\theta}^n_{t+\eps}(z)-\hat{\theta}^n_{t-\eps}(z)\geq-C(\eps+\delta t)$. The conclusion follows as well.

If $\hat{\theta}^n_{t+\eps}(z)=\hat{\theta}^n_{t-\eps}(z)$, this means no jumps happen during $[t-\eps,t+\eps]$. Therefore $F^n_{t+\eps}(z)\leq F^n_{t-\eps}(z)$. In this case
$$
\Theta(\theta^{M,n}(z),F^n_{t+\eps}(z),t+\eps)\leq\Theta(\theta^{M,n}(z),F^n_{t-\eps}(z),t-\eps)+C\eps.
$$
Therefore, we also have the quantity $\geq-C\eps$ as well.
\end{proof}

As a second application of Lemma \ref{3.7l}, we finally prove point (v) of the Theorem.
First we derive an obvious corollary of above lemma.

Fix some $t_0\in[0,T)$, take $\eps>\delta t$, $\kappa>0$, and define 
\begin{equation*}
\begin{split}
&J_1=\{z\in[0,1):\sup_{t\in[t_0,t_0+\eps]}F^n_t(z)-F^n_{t_0}(z)\geq\kappa\},\\
&J_2=\{z\in[0,1):F^n_{t_0+\eps}(z)-F^n_{t_0}(z)\geq\frac{\kappa}{2}\}.
\end{split}
\end{equation*}
First we observe the following lemma:
\begin{lem}\label{3.9l}
Let $n,\,\delta t$ be chosen as in Lemma \ref{3.7l}. Then there exists a universal constant $C>0$, such that if $\eps+\delta t\leq\frac{\kappa}{C}$, then $J_1\subset J_2$.
\end{lem}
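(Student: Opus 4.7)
My plan is to argue by contradiction. Suppose $z \in J_1 \setminus J_2$. Since $t \mapsto F^n_t(z)$ is a step function with jumps only at the grid times $k\delta t$, the supremum in the definition of $J_1$ is attained at some $t^* \in [t_0, t_0+\eps]$ with $F^n_{t^*}(z) \geq F^n_{t_0}(z) + \kappa$. The assumption $z \notin J_2$ reads $F^n_{t_0+\eps}(z) < F^n_{t_0}(z) + \kappa/2$, so subtracting gives
\[ F^n_{t^*}(z) - F^n_{t_0+\eps}(z) > \kappa/2. \]

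The main step will be to convert this displacement into a lower bound on the measure of parcels that overtake $z$ between $t^*$ and $t_0+\eps$. Setting
\[ A = \{z' \in [0,1) : F^n_{t^*}(z') < F^n_{t^*}(z)\}, \qquad B = \{z' \in [0,1) : F^n_{t_0+\eps}(z') > F^n_{t_0+\eps}(z)\}, \]
the measure-preserving property $F^n_\tau \# \mathcal{L}^1_{[0,1]} = \mathcal{L}^1_{[0,1]}$ gives $|A| = F^n_{t^*}(z)$ and $|B| = 1 - F^n_{t_0+\eps}(z)$. Inclusion–exclusion then yields
\[ |A \cap B| \geq |A| + |B| - 1 = F^n_{t^*}(z) - F^n_{t_0+\eps}(z) > \kappa/2. \]

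To close the argument, I will apply Lemma \ref{3.7l} with initial time $t^*$ and time increment $t_0+\eps-t^* \leq \eps$: the intersection $A \cap B$ is exactly the set of parcels lying below $z$ at time $t^*$ and above $z$ at time $t_0+\eps$, so Lemma \ref{3.7l} bounds its measure by $C'(\eps + \delta t)$ for a universal constant $C'$. Combining the two estimates gives $\kappa/2 < C'(\eps + \delta t)$, contradicting the hypothesis whenever $C$ is chosen with $C \geq 2C'$.

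The only technical wrinkle to watch is that Lemma \ref{3.7l} is phrased using a grid point $z_{i_0}$ as reference, whereas $z$ need not be a grid point. Because each $F^n_\tau$ acts on every $J_j$ by translation onto $J_{\alpha_\tau(j)}$, replacing $z$ by its nearest grid point changes $|A \cap B|$ by at most $2/n$, which is absorbed harmlessly into the $\delta t$ term since $n\delta t = 1/(2C_4')$ is built into the hypothesis of Lemma \ref{3.7l}.
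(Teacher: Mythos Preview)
Your argument is correct and follows exactly the same route as the paper: suppose $z\in J_1\setminus J_2$, pick the time $t^*$ where the maximum is attained, subtract to get $F^n_{t^*}(z)-F^n_{t_0+\eps}(z)>\kappa/2$, then use measure preservation to see that at least $\kappa/2$ worth of parcels overtake $z$ between $t^*$ and $t_0+\eps$, contradicting Lemma~\ref{3.7l}. In fact you are slightly more careful than the paper, which applies Lemma~\ref{3.7l} to a general point $z_0$ without commenting on the grid-point issue you flag at the end.
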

\begin{proof}
Suppose there exists $z_0\in J_1-J_2$. Let $t^{\prime}\in[t_0,t_0+\eps]$ be such that $F^n_{t^{\prime}}(z_0)=\max_{t\in[t_0,t_0+\eps]}F^n_t(z_0)$. Then we know $F^n_{t^{\prime}}(z_0)-F^n_{t_0}(z)\geq\kappa$.
 It follows that $F^n_{t^{\prime}}(z_0)-F^n_{t_0+\eps}(z_0)\geq\frac{\kappa}{2}$. Consider the set
$$
\hat{J}_{z_0,\eps,\kappa}=\{z\in[0,1]: F^n_{t^{\prime}}(z)<F^n_{t^{\prime}}(z_0); F^n_{t_0+\eps}(z)>F^n_{t_0+\eps}(z_0)\}.
$$
Then we know that $\mathcal{L}^1(\hat{J}_{z_0,\eps,\kappa})\geq\frac{\kappa}{2}$. On the other hand, 
it follows from Lemma \ref{3.7l} that $\mathcal{L}^1(\hat{J}_{z_0,\eps,\kappa})\leq C(\eps+\delta t)$ with a universal constant $C$. Hence we have $\frac{\kappa}{2}\leq C(\eps+\delta t)$.
\end{proof}
The next lemma estimates $\mathcal{L}^1(J_2)$.
\begin{lem}\label{3.10l}
Let $\eps,\,n,\,\delta t$ be chosen as in previous lemma.
Let $\kappa\geq C\eps$, where $C$ is the constant given by previous lemma,
 then for some universal constant $C^{\prime}$, we have $\mathcal{L}^1(J_2)\leq\frac{C^{\prime}\eps}{\kappa}$.
\end{lem}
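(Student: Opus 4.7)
The plan is to reduce the problem to combinatorics of the permutations $\alpha_{t_0}$ and $\alpha_{t_0+\eps}$, then apply Lemma~\ref{3.7l} at every discrete index. Since $F^n_{t_0}$ translates each parcel interval $J_j$ onto $J_{\alpha_{t_0}(j)}$, and likewise for $F^n_{t_0+\eps}$, the displacement $g(z):=F^n_{t_0+\eps}(z)-F^n_{t_0}(z)$ is constant on each $J_j$ with value $\Delta_j/n$, where $\Delta_j:=\alpha_{t_0+\eps}(j)-\alpha_{t_0}(j)$. Hence $\mathcal{L}^1(J_2)=n^{-1}\#\{j:\Delta_j\geq n\kappa/2\}$ and, by a Chebyshev-type estimate, it suffices to prove the total positive displacement bound $\sum_{j=1}^n \Delta_j^+\leq Cn^2(\eps+\delta t)$.

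To obtain this, I would apply Lemma~\ref{3.7l} once per discrete index, placing $z_{i_0}$ inside each interval $J_{i_0}$. Tracing through that lemma's proof shows the set $J$ there is exactly $\bigcup_{j\in J_{k,l}} J_j$ with $j_0=i_0$, so on the discrete level the lemma reads
$$
\#\{j:\alpha_{t_0}(j)<\alpha_{t_0}(i_0),\ \alpha_{t_0+\eps}(j)>\alpha_{t_0+\eps}(i_0)\}\leq Cn(\eps+\delta t)
$$
for each $i_0$. Summing over $i_0=1,\ldots,n$ and swapping the order of summation yields
$$
\sum_{j=1}^n \#\{i_0:\alpha_{t_0}(i_0)>\alpha_{t_0}(j),\ \alpha_{t_0+\eps}(i_0)<\alpha_{t_0+\eps}(j)\}\leq Cn^2(\eps+\delta t).
$$

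The main non-routine step is the combinatorial claim that the inner count on the left is at least $\Delta_j$ whenever $\Delta_j>0$. Writing $A_j:=\{i:\alpha_{t_0}(i)>\alpha_{t_0}(j)\}$ and $B_j:=\{i:\alpha_{t_0+\eps}(i)<\alpha_{t_0+\eps}(j)\}$, both are subsets of $\{1,\ldots,n\}\setminus\{j\}$ with $|A_j|=n-\alpha_{t_0}(j)$ and $|B_j|=\alpha_{t_0+\eps}(j)-1$, so inclusion-exclusion in an ambient set of size $n-1$ gives $|A_j\cap B_j|\geq\Delta_j$. Combining these ingredients yields $\sum_j \Delta_j^+\leq Cn^2(\eps+\delta t)$, and Chebyshev then produces $\#\{j:\Delta_j\geq n\kappa/2\}\cdot(n\kappa/2)\leq Cn^2(\eps+\delta t)$, i.e.\ $\mathcal{L}^1(J_2)\leq 2C(\eps+\delta t)/\kappa$. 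Since $F^n_t$ is piecewise constant in time on intervals of length $\delta t$, the regime $\eps<\delta t$ either makes $J_2$ empty or reduces to taking $\eps=\delta t$, so we may assume $\delta t\leq\eps$ and absorb the $\delta t$ term to obtain the advertised $C'\eps/\kappa$ bound.
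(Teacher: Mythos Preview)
Your argument is correct, but the paper takes a shorter route. Rather than summing Lemma~\ref{3.7l} over all $n$ indices and invoking the inclusion--exclusion count $|A_j\cap B_j|\geq\Delta_j$, the paper uses the measure-preserving property of $F^n$ once: since $\int_{[0,1)}(F^n_{t_0+\eps}-F^n_{t_0})\,dz=0$ while the integral over $J_2$ is at least $\delta\kappa/2$ (with $\delta=\mathcal{L}^1(J_2)$), there must exist a single point $z_1$ with $F^n_{t_0+\eps}(z_1)-F^n_{t_0}(z_1)\leq-\delta\kappa/2$. The set of parcels that cross over $z_1$ between $t_0$ and $t_0+\eps$ then has measure at least $\delta\kappa/2$, and a single application of Lemma~\ref{3.7l} bounds that same set by $C(\eps+\delta t)$, giving $\delta\leq C'\eps/\kappa$ directly.

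So the paper trades your $n$-fold summation plus Chebyshev for one averaging step plus one pointwise application of the penetration lemma. Your route does buy something extra, namely the intermediate bound $\sum_j\Delta_j^+\leq Cn^2(\eps+\delta t)$ on the total positive displacement, which is a slightly stronger statement than what the paper records; but for the purpose of this lemma the paper's argument is more economical. One small remark: your closing discussion of the case $\eps<\delta t$ is unnecessary, since the standing hypothesis before Lemma~\ref{3.9l} already requires $\eps>\delta t$.
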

\begin{proof}
Let $\delta=\mathcal{L}^1(J_2)$. Since $F^n$ measure preserving, we know
$$
\int_{[0,1)}F^n_{t_0+\eps}(z)-F^n_{t_0}(z)dz=0.
$$
On the other hand from the definition of $J_2$,
$$
\int_{J_2}F^n_{t_0+\eps}(z)-F^n_{t_0}(z)dz\geq\frac{\delta\kappa}{2}.
$$
Therefore, there exists $z_1\in[0,1]$, such that $F^n_{t_0+\eps}(z_1)-F^n_{t_0}(z_1)\leq-\frac{\delta\kappa}{2}$. Now consider
$$
\hat{J}_{z_1,\eps}=\{z\in[0,1]:F^n_{t_0}(z)<F^n_{t_0}(z_1), F^n_{t_0+\eps}(z)>F^n_{t_0+\eps}(z_1)\}.
$$
Then we have $\mathcal{L}^1(\hat{J}_{z_1,\eps})\geq\frac{\delta\kappa}{2}$. 
But by Lemma \ref{3.7l}, we know $\mathcal{L}^1(\hat{J}_{z_1,\eps})\leq C(\eps+\delta t)\leq 3C\eps$.
Hence $\frac{\delta\kappa}{2}\leq 3C\eps$. This completes the proof.

\end{proof}
With above preparation, we can obtain the following continuity estimate
\begin{lem}
Let $n,\,\delta t$ be chosen as in Lemma \ref{3.7l}. Let $\eps>\frac{\delta t}{2}$. Then there exists a universal constant $C$ such that if $\eps<\frac{1}{C}$, we have
$$
||\hat{\theta}^n_{t_0+\eps}-\hat{\theta}^n_{t_0}||_{L^1(0,1)}=||\theta^n_{t_0+\eps}-\theta^n_{t_0}||_{L^1(0,1)}\leq C^{\prime}\sqrt{\eps}.
$$
for some universal contant $C^{\prime}$.
\end{lem}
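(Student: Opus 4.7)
The plan is to first reduce both sides of the claimed equality to the same signed integral, and then bound that integral by combining Theorem \ref{3.1t}(iv) with Lemma \ref{3.10l}. By Corollary \ref{2.2c}(iii), $t\mapsto\hat\theta^n(t,z)$ is non-decreasing, so $|\hat\theta^n_{t_0+\eps}-\hat\theta^n_{t_0}|=\hat\theta^n_{t_0+\eps}-\hat\theta^n_{t_0}$ pointwise in $z$. Since $F^n_t$ is a measure-preserving bijection of $[0,1)$ and $\theta^n_t$ is monotone increasing by Corollary \ref{2.2c}(i), $\theta^n_t$ is the monotone increasing rearrangement of $\hat\theta^n_t$; as the increasing rearrangement preserves pointwise inequalities, $\theta^n_{t_0+\eps}\geq\theta^n_{t_0}$ pointwise as well. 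Using the measure-preserving property to identify $\int\hat\theta^n_t\,dz=\int\theta^n_t\,dz$, both $L^1$ norms collapse to the single integral $\int_0^1(\hat\theta^n_{t_0+\eps}-\hat\theta^n_{t_0})\,dz$, which establishes the claimed equality and pins down the quantity to be estimated.

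Next I apply Theorem \ref{3.1t}(iv) to the interval $[t_0,t_0+\eps]$ (midpoint $t_0+\eps/2$, half-width $\eps/2$); the hypothesis $\eps>\delta t/2$ allows me to absorb the $\delta t$ error into $C\eps$, yielding
\begin{equation*}
\hat\theta^n_{t_0+\eps}(z)-\hat\theta^n_{t_0}(z)\leq\big(\Theta(\theta^{M,n}(z),F^n_{t_0+\eps}(z),t_0+\eps)-\Theta(\theta^{M,n}(z),F^n_{t_0}(z),t_0)\big)^++C\eps.
\end{equation*}
Using the uniform bounds on $\partial_z\Theta$ and $\partial_t\Theta$ on the relevant compact set, the positive part is controlled by $L(F^n_{t_0+\eps}(z)-F^n_{t_0}(z))^++L\eps$. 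Integrating over $z$ reduces the problem to showing that $I:=\int_0^1(F^n_{t_0+\eps}(z)-F^n_{t_0}(z))^+\,dz\leq C\sqrt\eps$.

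The bound on $I$ is the main step and is obtained by a layer-cake decomposition paired with Lemma \ref{3.10l}. Writing $E_s:=\{z\in[0,1):F^n_{t_0+\eps}(z)-F^n_{t_0}(z)>s\}$, we have $I=\int_0^1\mathcal{L}^1(E_s)\,ds$, with upper limit $1$ because $F^n_t$ takes values in $[0,1)$. I split at the threshold $s=\sqrt\eps$: on $[0,\sqrt\eps]$ the trivial bound $\mathcal{L}^1(E_s)\leq 1$ contributes $\sqrt\eps$; on $(\sqrt\eps,1]$, provided $\eps$ is small enough that $\sqrt\eps$ exceeds the threshold $C\eps$ in Lemma \ref{3.10l}, I apply that lemma with $\kappa=2s$ to get $\mathcal{L}^1(E_s)\leq C'\eps/s$, whose integral is $C'\eps\log(1/\sqrt\eps)=o(\sqrt\eps)$ as $\eps\to 0$. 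Summing gives $I\leq C\sqrt\eps$, hence the desired continuity estimate.

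The main obstacle is not analytic but bookkeeping: making sure the smallness thresholds in Lemmas \ref{3.7l}, \ref{3.9l}, \ref{3.10l} and the symmetric hypothesis of Theorem \ref{3.1t}(iv) are all met simultaneously. The condition $\eps>\delta t/2$ absorbs the timestep error in (iv), while $\eps<1/C$ is chosen small enough so that $\sqrt\eps$ lies above the Lemma \ref{3.10l} threshold and all upstream smallness requirements ($n\delta t$ small, $\eps+\delta t\leq\kappa/C$) hold. Once these are aligned, the argument actually yields the slightly stronger rate $O(\eps\log(1/\eps))$ (by splitting at $s=C\eps$ instead), but the $\sqrt\eps$ rate stated is cleaner and suffices for compactness purposes.
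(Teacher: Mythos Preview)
Your proof is correct. Both your argument and the paper's hinge on Lemma \ref{3.10l} and the threshold $\kappa=\sqrt\eps$, but the routes to the pointwise estimate differ. The paper bounds $\hat\theta^n_{t_0+\eps}-\hat\theta^n_{t_0}$ directly from the discrete procedure and Corollary \ref{2.2c}(iv) by $C\max_{t'\in[t_0,t_0+\eps]}(F^n_{t'}(z)-F^n_{t_0}(z))+C\eps$; because the supremum over the whole interval appears, it must pass through Lemma \ref{3.9l} (to reduce $J_1$ to $J_2$) before invoking Lemma \ref{3.10l}, and then splits the integral at a single threshold $\kappa$, bounding the integral over $J_1$ crudely by $\|\hat\theta\|_{L^\infty}\mathcal L^1(J_1)$. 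You instead invoke Theorem \ref{3.1t}(iv) as a black box, which produces a bound involving only the endpoint values $F^n_{t_0+\eps},F^n_{t_0}$; this lets you skip Lemma \ref{3.9l} entirely and feed the layer-cake decomposition straight into Lemma \ref{3.10l}. Your route is a bit cleaner and, as you observe, actually delivers the sharper rate $O(\eps\log(1/\eps))$; the paper's route is more self-contained at this point in the text, since it does not rely on Theorem \ref{3.1t}(iv), whose proof appears immediately above. Your justification of the equality $\|\hat\theta^n_{t_0+\eps}-\hat\theta^n_{t_0}\|_{L^1}=\|\theta^n_{t_0+\eps}-\theta^n_{t_0}\|_{L^1}$ via monotone rearrangement is also more explicit than the paper's, which simply asserts it.
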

\begin{proof}
From the discrete procedure, we know if $\hat{\theta}^n_{t_0+\eps}(z)>\hat{\theta}^n_{t_0}(z)$,
$$
\hat{\theta}^n_{t_0+\eps}(z)\leq\max_{t^{\prime}\in[t_0,t_0+\eps]}
\Theta(\theta_0^{M,n}(z),F^n_{t^{\prime}}(z),t_0+\eps)+\sup|\partial_t\Theta|\eps.
$$
Since $\hat{\theta}^n_{t_0}(z)\geq\Theta(\theta^{M,n}(z),F^n_{t_0}(z),t_0)-\sup|\partial_t\Theta|\delta t$, by Corollary \ref{2.2c} point (iv), we know
$$
\hat{\theta}^n_{t_0+\eps}(z)-\hat{\theta}^n_{t_0}(z)\leq\sup\partial_z\Theta\cdot\max_{t\in[t_0,t_0+\eps]}(F^n_t(z)-F^n_{t_0}(z))+\sup|\partial_t\Theta|\cdot(\eps+\delta t).
$$
Let $\kappa\geq C\eps$, where $C$ is the universal constant given by Lemma \ref{3.9l}. Combining Lemma \ref{3.9l},\ref{3.10l}, we conclude $\mathcal{L}^1(J_1)
\leq\mathcal{L}^1(J_2)
\leq\frac{C^{\prime}\eps}{\kappa}$.
Hence
\begin{equation*}
\begin{split}
\int_{[0,1]}\hat{\theta}_{t_0+\eps}(z)-\hat{\theta}_{t_0}(z)dz&=\int_{J_1}\hat{\theta}_{t_0+\eps}(z)-\hat{\theta}_{t_0}(z)dz+\int_{J_1^c}\hat{\theta}_{t_0+\eps}(z)-\hat{\theta}_{t_0}(z)dz\\
&\leq ||\hat{\theta}||_{L^{\infty}}\cdot\frac{C^{\prime}\eps}{\kappa}+\sup|\partial_z\Theta|\kappa+3\sup|\partial_t\Theta|\cdot\eps.
\end{split}
\end{equation*}
Now we take $\eps$ small enough such that $\sqrt{\eps}\leq\frac{1}{C}$, where $C$ is given by Lemma \ref{3.9l} and let $\kappa=\sqrt{\eps}$, we obtain the continuity estimates:
$$
\int_{[0,1]}|\theta_{t_0+\eps}(z)-\theta_{t_0}(z)|dz=\int_{[0,1]}\hat{\theta}_{t_0+\eps}(z)-\hat{\theta}_{t_0}(z)dz\leq C^{\prime}\sqrt{\eps}.
$$
\end{proof}
Next we derive the continuity estimate for the flow maps, which follows from the continuity estimate for $\theta$
\begin{lem}
Let $s,\,t\in[0,T)$, then for some universal constant $C$
\begin{equation}\label{3.10}
(F^n(t,z)-F^n(s,z))^+\leq C(\hat{\theta}^n(t,z)-\hat{\theta}^n(s,z)).
\end{equation}
\end{lem}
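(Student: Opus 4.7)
The plan is to reduce to discrete times and then telescope the one-step estimate \eqref{3.1} from the proof of the earlier total variation bound. Since $F^n(\cdot,z)$ and $\hat\theta^n(\cdot,z)$ are piecewise constant on each interval $[k\delta t, (k+1)\delta t)$, it is enough to prove the inequality for $s = k_1\delta t$ and $t = k_2\delta t$ with $k_1 \leq k_2$.

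Fix $j$ with $z \in J_j$ and write $\alpha_m = \alpha_{m\delta t}(j)$. The key observation is that the one-step lower bound
$$
\hat\theta_j^n((m+1)\delta t) - \hat\theta_j^n(m\delta t) \;\geq\; \frac{\inf\partial_z\Theta}{n}\bigl(\alpha_{m+1}-\alpha_m\bigr)^+
$$
derived at \eqref{3.1} holds \emph{unconditionally}: when $\alpha_{m+1} > \alpha_m$ it is precisely \eqref{3.1}, while when $\alpha_{m+1} \leq \alpha_m$ the right-hand side is zero and the left-hand side is still nonnegative by the monotonicity of $t\mapsto\hat\theta_j^n$ (Corollary \ref{2.2c}(iii)).

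Telescoping this bound from $m=k_1$ to $m=k_2-1$ and using the elementary inequality $\sum_m (\alpha_{m+1}-\alpha_m)^+ \geq \sum_m (\alpha_{m+1}-\alpha_m) = \alpha_{k_2}-\alpha_{k_1}$ gives
$$
\hat\theta^n(t,z) - \hat\theta^n(s,z) \;\geq\; \frac{\inf\partial_z\Theta}{n}\bigl(\alpha_{k_2}-\alpha_{k_1}\bigr).
$$
If $\alpha_{k_2} \leq \alpha_{k_1}$ the claim is trivial since $(F^n(t,z)-F^n(s,z))^+ = 0$ and the right-hand side is nonnegative; otherwise the above inequality rearranges to
$$
\bigl(F^n(t,z) - F^n(s,z)\bigr)^+ \;=\; \frac{\alpha_{k_2}-\alpha_{k_1}}{n} \;\leq\; \frac{1}{\inf\partial_z\Theta}\bigl(\hat\theta^n(t,z) - \hat\theta^n(s,z)\bigr),
$$
so one may take $C = 1/\inf\partial_z\Theta$, which is universal.

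There is no real obstacle here; the only conceptual point to notice is that the net upward displacement is bounded above by the total upward displacement, which is exactly the quantity controlled parcel-by-parcel by the increase of $\hat\theta$ through \eqref{3.1}. The downward moves of the parcel contribute nothing to either side of the telescoping estimate because along downward motion $\hat\theta$ is unchanged (the parcel is then "dry", by Lemma \ref{3.4nl}) and because we only need a one-sided bound.
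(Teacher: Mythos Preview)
Your proof is correct and follows essentially the same approach as the paper: reduce to discrete times and sum the one-step inequality \eqref{3.1} from $k_1$ to $k_2-1$. The paper's proof is just the one-line version of what you wrote; your added justification that \eqref{3.1} holds unconditionally (via Corollary~\ref{2.2c}(iii) when the right-hand side vanishes) and your explicit handling of the case $\alpha_{k_2}\le\alpha_{k_1}$ make the argument cleaner but do not change the route.
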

\begin{proof}
As before, this follows from the discrete estimate. For any $k,l$ integers with $0\leq k<l\leq\frac{T}{\delta t}$, $(F^n_{l\delta t}(z)-F^n_{k\delta t}(z))^+\leq C(\hat{\theta}^n_{l\delta t}(z)-\hat{\theta}^n_{k\delta t}(z))$. This follows from sum (\ref{3.1}) from $k$ to $l-1$.
\end{proof}
\begin{cor}
Let $s,\,t\in[0,T)$, then for the same constant as in previous lemma, we have
$$
\int_{[0,1]}|F^n(t,z)-F^n(s,z)|dz\leq 2C\int_{[0,1]}(\hat{\theta}^n(t,z)
-\hat{\theta}^n(s,z))dz.
$$
\end{cor}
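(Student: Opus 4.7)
The plan is to combine the pointwise inequality from the previous lemma with the measure-preserving property of the flow maps $F_t^n$ to absorb the negative part. Without loss of generality assume $s<t$, so that by Corollary \ref{2.2c}(iii) the integrand $\hat{\theta}^n(t,z)-\hat{\theta}^n(s,z)$ on the right-hand side is pointwise nonnegative.

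The first step uses the fact that $F_t^n$ is measure preserving on $[0,1]$ for every $t$ (this is immediate from the construction of $F_t^n$ as a composition of the piecewise-translation rearrangements $\tilde F_{k\delta t}^n$). Consequently
$$
\int_{[0,1]} F^n(t,z)\,dz \;=\; \int_{[0,1]} z\,dz \;=\; \int_{[0,1]} F^n(s,z)\,dz,
$$
so that $\int_{[0,1]}\bigl(F^n(t,z)-F^n(s,z)\bigr)\,dz=0$. The elementary identity $|a|=2a^+ - a$ then yields
$$
\int_{[0,1]}|F^n(t,z)-F^n(s,z)|\,dz \;=\; 2\int_{[0,1]}\bigl(F^n(t,z)-F^n(s,z)\bigr)^+ dz.
$$

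The second step simply applies the pointwise bound $(F^n(t,z)-F^n(s,z))^+\leq C(\hat{\theta}^n(t,z)-\hat{\theta}^n(s,z))$ from the previous lemma (its statement was shown with $s<t$ via summing the one-step estimate \eqref{3.1}) and integrates over $z\in[0,1]$. Combining the two displays gives the claimed inequality with the same constant $C$.

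There is no real obstacle here; the only subtle ingredient is the symmetrisation trick that converts the positive-part estimate into a full $L^1$-estimate, which works precisely because $F_t^n$ preserves Lebesgue measure, so the signed integral of $F^n(t,\cdot)-F^n(s,\cdot)$ vanishes. Combined with Theorem \ref{3.1t}(v), this corollary then delivers Theorem \ref{3.1t}(vi), namely $\|F^n(t,\cdot)-F^n(s,\cdot)\|_{L^1}\leq C_6\sqrt{t-s+\delta t}$.
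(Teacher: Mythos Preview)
Your proof is correct and follows essentially the same approach as the paper: use the measure-preserving property of $F_t^n$ to conclude that $\int_{[0,1]}(F^n(t,z)-F^n(s,z))\,dz=0$, hence $\int|F^n(t,\cdot)-F^n(s,\cdot)|=2\int(F^n(t,\cdot)-F^n(s,\cdot))^+$, and then integrate the pointwise bound \eqref{3.10}. The paper phrases the first step equivalently as $\int(F^n(t,\cdot)-F^n(s,\cdot))^+=\int(F^n(t,\cdot)-F^n(s,\cdot))^-$.
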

\begin{proof}
SInce $F^n$ measure preserving, we know $\int_{[0,1]}(F^n(t,z)-F^n(s,z))^+dz=\int_{[0,1]}(F^n(t,z)-F^n(s,z))^-dz$. Then the result follows by integrating (\ref{3.10}) in $z$.
\end{proof}
The point (vi) follows from previous corollary and the continuity estimate for $\theta$.
\section{Definition of measure valued solution}
In this section, we wish to define the measure valued solutions. As suggested in the discussion in the first section, we need to consider "path-spaces" which represents all the possible trajectories of an arbitrary parcel. Thanks to the point (iii) of \ref{3.1t}, such paths take value in $[0,1]$, and should have bounded variation, with uniform bound on $BV$.

Let $B_1>0$ and $X_{B_1}$ be the set of functions $f:(0,T)\rightarrow[0,1]$ which is left continuous and has total variation no bigger than $B_1$, that is to say, for any partition of the interval $[0,T]$, denoted as $0<t_0<t_1<t_2<\cdots<t_m<T$, we have 
$$
\sum_{i=1}^m|f(t_i)-f(t_{i-1})|\leq B_1.
$$

Let $d$ be the $L^2$ distance for functions in $X_{B_1}$, that is
$$
d(f,g)=\big(\int_0^T|f(t)-g(t)|^2dt\big)^{\frac{1}{2}}.
$$
It is not hard to see that $d$ is indeed a distance, since we required continuity from left. 
Also one can check $(X_{B_1},d)$ is a complete separable  metric space, by Helly`s selection principle. 
The physical meaning of such a space $X_{B_1}$ is the space of all possible paths of the parcels.
The reason why such paths have bounded variation is due to point (iii) of Theorem 4.1.

Let $B_2>0$ and $Y_{B_2}$ be the space of monotone increasing function on $[0,1]$, right continuous on $[0,1)$, and with absolute bound $\leq {B_2}$, 
equipped with $L^2-$ distance. That is, given $h,k\in Y_{B_2}$, define their distance to be $d^{\prime}(h,k)=||h-k||_{L^2(0,1)}$.
The physical meaning of this space is all the possible profiles of potential temperature $\theta$. We have incorporated the physical constraint that they must be monotone increasing.

Let $B_3>0$, put $Y=C([0,T);Y_{B_3})$, that is, $Y$ is the space of continuous maps from $[0,T)$ to $Y_{B_3}$. Here $B_3$ will be determined later on. 
We can define a metric on the space $Y$: given $h,k:[0,T)\rightarrow Y_{B_3}$, define $d(h,k)=\max_{t\in[0,T)}d^{\prime}(h(t),g(t))$.
The physical meaning of the space is the possible evolutions of the potential temperature profile.

To avoid confusion, we will denote a generic  element from the space $Y_{B}$ to be $\theta$, while a generic element from the space $Y$ will be denoted by $\tilde{\theta}$.
It is easy to check both $Y_{B}$ and $Y$ are complete separable metric spaces, for any fixed $B>0$.

First we specify the class of initial data we will be considering. Since we will be considering solutions in some "probabilistic" sense, our initial data will also be "probabilistic", namely some probability distributions. Let $B_2>0$ be an arbitrary positive constant.
\begin{defn}
Let $\zeta_0\in\mathcal{P}( Y_{B_2}\times\mathbb{R}\times[0,1])$. We say $\zeta_0$ is an admissible data if the following holds:
\begin{equation*}
\begin{split}
&\textrm{(i)$\pi_{13}\#\zeta_0
=\mu_0\times\mathcal{L}^1_{[0,1]}$ for some $\mu_0\in\mathcal{P}(Y_{B_2})$, and $\pi_2\#\zeta_0$ has compact support;}\\
&\textrm{(ii)$Q^{sat}(\theta(z),z,0)\geq s-\theta(z)$ for $\zeta_0-$ a.e $(\theta,s,z)$.}
\end{split}
\end{equation*}
\end{defn}
\sloppy
\begin{rem}
Heuristically, $\zeta_0$ can be thought of prescribing the probability distribution of $\{\theta,\theta^M(z)\}_{z\in[0,1]}$, where $\theta^M=\theta+q$. 
Indeed, using that $\pi_{13}\#\zeta_0=\mu_0\times\mathcal{L}^1_{[0,1]}$ and also the disintegration theorem, we can write $\zeta_0=\int_{Y_{B_2}\times[0,1]}
d(\mu_0\times\mathcal{L}^1_{[0,1]})(\theta,z)
\int_{\mathbb{R}}d\zeta_{\theta,z}(s)$. 
Here $(\theta,z)\longmapsto d\zeta_{\theta,z}(s)$ is a Borel family of probability measures on $\mathbb{R}$..
This describes the probability distribution of $\theta^M$, hence $q$, given $\theta$ and $z$.
The second point simply says the physical constraint is satisfied with probability 1.
\end{rem}
We can define the following evaluation maps for the space $X_{B_1}$ and $Y$. Fix any $t\in(0,T)$, define $e_t:X\rightarrow [0,1]$ by
$\gamma\longmapsto\gamma(t).$ Similarly define $e'_t:Y\rightarrow Y_{B_3}$ by $\tilde{\theta}\longmapsto\tilde{\theta}(t)$.
We will frequently write $e_t(\gamma)=\gamma_t$ and $e'_t(\tilde{\theta})=\tilde{\theta}_t$ to simplify the notation. We see from the definition of the space $Y$ that $e_t^{\prime}$ is a continuous map. Also we can observe $e_t$ is Borel, even if it is not continuous in general.
Here we observe:
\begin{lem}\label{4.3a}
The set $\{(\theta,s,z)\in Y_{B_2}\times\mathbb{R}\times[0,1]:Q^{sat}(\theta(z),z,0)\geq s-\theta(z)\}$ is a Borel subset of $Y_{B_2}\times\mathbb{R}\times[0,1]$. Also the map $e_t$ defined in previous paragraph is Borel map.
\end{lem}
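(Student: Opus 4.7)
The strategy for both claims is the same: in each case the map in question is a pointwise limit of jointly continuous maps, so it is Borel, and then an inequality between Borel functions carves out a Borel set.

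For the first claim, the key step is to realize that although $\theta \mapsto \theta(z)$ is not in general $L^2$-continuous, it can be recovered from continuous averaging operators. Define, for $h\in(0,1)$,
\[
\phi_h: Y_{B_2} \times [0,1) \to \mathbb{R}, \qquad \phi_h(\theta,z) = \frac{1}{h}\int_z^{\min(z+h,1)}\theta(w)\,dw.
\]
I would verify that $\phi_h$ is jointly continuous: if $\theta_n \to \theta$ in $L^2(0,1)$ and $z_n \to z$, then by Cauchy--Schwarz $\int_{z_n}^{z_n+h}(\theta_n-\theta) \to 0$, and $z \mapsto \int_z^{z+h}\theta$ is continuous since $\theta\in L^2\subset L^1$. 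Then, since every $\theta \in Y_{B_2}$ is right-continuous on $[0,1)$, one has $\phi_h(\theta,z) \to \theta(z)$ as $h \to 0^+$ for every $(\theta,z)\in Y_{B_2}\times[0,1)$. Thus the evaluation $E(\theta,z) := \theta(z)$ is a pointwise limit of continuous maps, hence Borel; the single point $z=1$ is handled trivially (e.g.\ by extending $\theta$ by its limit, or noting a Borel section suffices). Since $Q^{sat}$ is smooth, the function $(\theta,s,z) \mapsto Q^{sat}(E(\theta,z),z,0) - s + E(\theta,z)$ is Borel on $Y_{B_2}\times\mathbb{R}\times[0,1]$, and the set in question is the preimage of $[0,\infty)$.

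For the second claim, I would argue entirely analogously. For $t\in(0,T)$ and small $h>0$ with $[t-h,t]\subset(0,T)$, set
\[
\psi_h: X_{B_1} \to \mathbb{R}, \qquad \psi_h(\gamma) = \frac{1}{h}\int_{t-h}^{t}\gamma(s)\,ds.
\]
Joint continuity in $\gamma$ under the $L^2$ distance on $X_{B_1}$ is immediate from Cauchy--Schwarz. By left continuity of $\gamma$ at $t$ (which is part of the definition of $X_{B_1}$), $\psi_h(\gamma) \to \gamma(t) = e_t(\gamma)$ as $h \to 0^+$ pointwise on $X_{B_1}$. Hence $e_t$ is Borel.

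The only real obstacle is the joint continuity of $\phi_h$: one must ensure that the $L^2$ estimate on the integrand and the continuity of the integration limits are combined correctly, which is why I split the difference $\phi_h(\theta_n,z_n)-\phi_h(\theta,z)$ into a part handled by the $L^2$-norm bound (via Cauchy--Schwarz over an interval of length $h$) and a part handled by absolute continuity of the indefinite integral of a fixed $L^2$ function. Everything else is just composition of a smooth function with a Borel one, and a standard inequality between Borel functions.
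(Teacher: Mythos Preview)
Your proposal is correct and follows essentially the same approach as the paper: both approximate the evaluation maps by one-sided averages $\frac{1}{h}\int_z^{z+h}\theta$ and $\frac{1}{h}\int_{t-h}^{t}\gamma$, check that these are continuous in the $L^2$ topology, and use right continuity of $\theta$ (respectively left continuity of $\gamma$) for pointwise convergence. The only cosmetic difference is that the paper handles the endpoints by extending $\theta$ by a constant beyond $1$ and $\gamma$ by its right limit below $0$, whereas you truncate the integration interval and treat $z=1$ separately; both work.
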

\begin{proof}
We just need to show the evaluation map $A:(\theta,z)\longmapsto\theta(z)$ is Borel. 
Fix $\eps>0$, let $A_{\eps}(\theta,z)=\eps^{-1}\int_z^{z+\eps}\theta(w)dw$.  Here we extended $\theta(w)\equiv C$ for $w>1$.
Then for each fixed $\eps>0$, $A_{\eps}(\theta,z)$ is continuous, and for any fixed $(\theta,z)$, $A_{\eps}(\theta,z)\rightarrow A(\theta,z)$, since $\theta$ is right continuous after extension. This proves $A$ is Borel measurable.

That $e_t$ is Borel is proved in a similar way. First we can define $f(t)=f(0)$ for $t\leq0$. With this extension, $f(t)$ is define on $(-\infty,T)$ and is left continuous. Hence the map $f(t)\longmapsto\eps^{-1}\int_{t-\eps}^tf(s)ds$ will converge to $f(t)$ as $\eps$ tends to 0.
\end{proof}
A "deterministic" initial data takes the form $\zeta_0=\delta_{\theta^0}\times
((\theta^0+q^0)\times id)\#\mathcal{L}^1_{[0,1]}$. In this case, there is only one possible choice $\theta^0$, and for each fixed $z$, $\theta^M$ takes a deterministic value $\theta^0(z)+q^0(z)$.

Here we can make a definite choice of the constant $B_1$ and $B_3$ which are involved in defining the spaces $X_{B_1}$ and $Y$. 
By point (i) in the Definition 5.1, we can assume $\pi_2\#\zeta_0\subset[-M+1,M-1]$ for some $M>0$. We will determine $B_1$ and $B_3$ such that they depend only on $B_2$, $M$, $T$ and also the function $Q^{sat}$.
Now take $B_1$ to be the constant $C_3$  given by point (iii) of Theorem \ref{3.1t} if we have the bound $||\theta_0^n||_{L^{\infty}}+||q_0^n||_{L^{\infty}}\leq M+2B_2$. Let $B_3$ be the constant $C_1$ given by point (i) of Theorem \ref{3.1}. if we have the bound $||\theta_0^n||_{L^{\infty}}+||q_0^n||_{L^{\infty}}\leq M+2B_2$.  Without loss of generality we can assume $B_3>B_2$ so that $Y_{B_2}\subset Y_{B_3}$.

With such a choice of constants in place, we propose the following defintion of measure-valued solutions.
\begin{defn}\label{1.6d}
Let $\lambda\in\mathcal{P}(Y\times\mathbb{R}\times[0,1]\times X_{B_1})$, and denote $\eta_t=(e_t^{\prime}\times id\times id\times e_t)\#\lambda$, $\zeta_t=\pi_{124}\#\eta_t\in\mathcal{P}(Y_{B_3}\times\mathbb{R}\times[0,1])$. Then we say $\lambda$ is a measure valued solution to admissible initial data $\zeta_0$ if the following are satisfied:\\
(i)$\zeta_t\rightarrow\zeta_0$, $\pi_{34}\#\eta_t\in\Gamma(\mathcal{L}^1_{[0,1]},\mathcal{L}^1_{[0,1]})\rightarrow(id\times id)\#\mathcal{L}^1_{[0,1]}$ as $t\rightarrow0$, and $t\longmapsto\eta_t$ narrowly continuous.\\
(ii)For any $t\in(0,T)$, $\pi_{13}\#\zeta_t=\mu_t\times
\mathcal{L}^1_{[0,1]}$, $\pi_2\#\zeta_t$ has compact support. Besides $Q^{sat}(\theta(z),z,t)\geq s-\theta(z)$ for $\zeta_t-$a.e $(\theta,s,z)$.\\
(iii)For $\lambda-$a.e $(\tilde{\theta},s,z,\gamma)$, we have $\tilde{\theta}_t(\gamma_t)\leq
\tilde{\theta}_{t^{\prime}}
(\gamma_{t^{\prime}})$ for $\mathcal{L}^2-a.e$ $(t,t^{\prime})\in(0,T)^2$ and $t<t^{\prime}$.\\
(iv)For $\lambda-$a.e $(\tilde{\theta},s,z,\gamma)$, we have the equality of measures:
$$
\partial_t(\tilde{\theta}_t(\gamma_t))=\big[\partial_t
(Q^{sat}(\tilde{\theta}_t(\gamma_t),\gamma_t,t))\big]^-\lfloor E_{\tilde{\theta},s,\gamma},$$
where $E_{\tilde{\theta},s,\gamma}$ is the wet set given by
$E_{\tilde{\theta},s,\gamma}=\{t\in(0,T):(\tilde{\theta}(\gamma))^*(t)=s-Q^{sat}((\tilde{\theta}(\gamma))^*(t),\gamma_t,t)\}.$
\end{defn}
In the above, $(\tilde{\theta}(\gamma))^*$ is the monotone increasing, 
left continuous version of $\tilde{\theta}_t(\gamma_t)$ chosen according to Remark \ref{1.4r}. 
This is possible due to point (iii) in the above definition.
The notation $\Gamma(\mathcal{L}^1_{[0,1]},\mathcal{L}^1_{[0,1]})$ in point (i) denotes the set of Radon probability measures on $[0,1]^2$ whose projections on both components are equal to $\mathcal{L}^1_{[0,1]}$.

Point (i) simply specifies in what sense the initial data is satisfied. $\zeta_t$ gives the probability distribution of $\{\theta,\theta^M(z)\}_{z\in[0,1]}$ at time $t$, and narrowly converge to $\zeta_0$ as $t\rightarrow0$. 
The second convergence simply means the "flow map" converges to identity as $t\rightarrow0$. That $\pi_{34}\zeta_t\in\mathcal{L}^1_{[0,1]}$ is a reformulation of the measure preserving property of the flow map, namely the incompressibility.

Point (ii) shows that $\zeta_t$ obtained satisfies the same conditions as required by the "admissibility" of the data. Therefore, one can take any $\zeta_t$ as initial data and evolves the solution forward.

Point (iii) shows that for all the possible choice of evolution of $\tilde{\theta}$ and the fluid path $\gamma$, $t\longmapsto\tilde{\theta}_t(\gamma_t)$ is always monotone increasing in $t$(up to some set of Lebesgue measure 0). 

Point (iv) shows that for possible choice of evolution of $\tilde{\theta}$ and the fluid path $\gamma$, the correct equation is satisfied.

Next we show if the random evolution of the solution happens to be deterministic, then Definition \ref{1.3d} and Definition \ref{1.6d} are consistent.
\begin{lem}
Let $\lambda$ be a measure valued solution to admissble initial data $\zeta_0$. Assume $\zeta_0=\delta_{\theta_0}
\times\big((\theta_0+q_0)\times id\big)\#\mathcal{L}^1_{[0,1]}$. Assume also that $\lambda=\delta_{\theta(t)}
\times\big((\theta_0+q_0)\times id\times \Phi_F)\#\mathcal{L}^1_{[0,1]}$, for some Borel map $F:[0,T)\times[0,1]\rightarrow[0,1]$. 
Let $\Phi_F(z)$ associates each $z$ to its path $t\longmapsto F_t(z)$. We also assume that there exists inverse map $F^*:[0,T)\times[0,1]\rightarrow[0,1]$ with $F_t^*\circ F_t=id$ and $F_t\circ F_t^*=id$ for $\mathcal{L}^1_{[0,1]}-a.e\,z$ and any $t\in (0,T)$.
Then $(\theta(t,z),\theta_0(F_t^*(z)+q_0(F_t^*(z))-\theta(t,z),F)$ is a weak Lagrangian solution in the sense of Definition \ref{1.3d}.
\end{lem}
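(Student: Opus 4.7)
The plan is to verify the six conditions of Definition \ref{1.3d} by translating each condition of Definition \ref{1.6d} under the deterministic ansatz. Unpacking $\lambda$ first gives $\eta_t=(\theta(t),\theta_0(z)+q_0(z),z,F_t(z))\#\mathcal{L}^1_{[0,1]}$ and $\zeta_t=\delta_{\theta(t)}\times((\theta_0+q_0)\times id)\#\mathcal{L}^1_{[0,1]}$, so each $\lambda$-a.e.\ statement in $(\tilde\theta,s,z,\gamma)$ reduces to an $\mathcal{L}^1_{[0,1]}$-a.e.\ statement in $z$, with the identifications $\tilde\theta_t(\gamma_t)=\theta(t,F_t(z))=\hat\theta_t(z)$, $\gamma_t=F_t(z)$, and $s=\theta_0(z)+q_0(z)$. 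Condition (ii) of Definition \ref{1.3d} then follows because $\theta(t)\in Y_{B_3}$ is monotone increasing by definition of the space; (iii) is the hypothesis; and (iv) is the direct computation
\[
\hat\theta_t(z)+\hat q_t(z)=\theta(t,F_t(z))+\theta_0(F_t^*(F_t(z)))+q_0(F_t^*(F_t(z)))-\theta(t,F_t(z))=\theta_0(z)+q_0(z),
\]
valid $\mathcal{L}^1$-a.e.\ thanks to $F_t^*\circ F_t=id$.

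For condition (i), I would read off the initial-data convergence from the narrow convergences postulated in Definition \ref{1.6d}(i). Since $\zeta_t$ is a product of $\delta_{\theta(t)}$ with a fixed marginal, narrow convergence $\zeta_t\to\zeta_0$ forces $\theta(t)\to\theta_0$ in the metric of $Y_{B_3}$, i.e.\ in $L^2\subset L^1$. Narrow convergence of $\pi_{34}\#\eta_t=(id,F_t)\#\mathcal{L}^1_{[0,1]}$ to the diagonal coupling $(id,id)\#\mathcal{L}^1_{[0,1]}$, tested against the bounded continuous function $(x,y)\mapsto|x-y|$, yields $F_t\to id$ in $L^1$; the analogous statement for $F_t^*$ follows by swapping the roles of the two factors, using that $F_t$ is measure preserving. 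The $L^1$-continuity in $t$ of $\theta_t$, $F_t$ and $F_t^*$ is obtained from the narrow continuity of $\eta_t$ applied to the same test functions, and then $q_t\to q_0$ and continuity of $t\mapsto q_t$ in $L^1$ follow from the formula $q_t(z)=\theta_0(F_t^*(z))+q_0(F_t^*(z))-\theta(t,z)$, the boundedness of $\theta_0,q_0$, and dominated convergence.

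Conditions (v) and (vi) are then obtained by direct substitution. Point (iii) of Definition \ref{1.6d} becomes point (v) of Definition \ref{1.3d} once $\tilde\theta_t(\gamma_t)$ is identified with $\hat\theta_t(z)$. For (vi), using the conservation $s=\hat\theta_t(z)+\hat q_t(z)$ and taking $\hat q_t^*(z):=s-\hat\theta_t^*(z)$ as the left-continuous monotone (decreasing) version of $\hat q_t(z)$, the wet set of Definition \ref{1.6d},
\[
E_{\tilde\theta,s,\gamma}=\{t:(\tilde\theta(\gamma))^*(t)=s-Q^{sat}((\tilde\theta(\gamma))^*(t),\gamma_t,t)\},
\]
coincides with $E_z=\{t:\hat q_t^*(z)=Q^{sat}(\hat\theta_t^*(z),F_t(z),t)\}$, and the measure equation transfers verbatim. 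The main obstacle I expect is the careful null-set bookkeeping needed to ensure that the $\lambda$-a.e.\ selection of left-continuous monotone versions used in Definition \ref{1.6d}(iv) agrees, for $\mathcal{L}^1_{[0,1]}$-a.e.\ $z$, with the version $\hat\theta_t^*(z)$ prescribed by Remark \ref{1.4r} (and likewise for $\hat q$); this will require Fubini together with the narrow continuity of $\eta_t$ to swap the order of the exceptional sets in $z$ and in $t$.
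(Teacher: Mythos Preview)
Your proposal is correct and follows essentially the same route as the paper: both arguments unpack the deterministic form of $\lambda$ to reduce each $\lambda$-a.e.\ statement to an $\mathcal{L}^1_{[0,1]}$-a.e.\ statement in $z$, then read off conditions (i)--(vi) of Definition~\ref{1.3d} directly from the corresponding clauses of Definition~\ref{1.6d}. One small slip: your formula for $\zeta_t$ should have $F_t$ rather than $id$ in the last slot, since $\zeta_t=\pi_{124}\#\eta_t$ picks out $\gamma_t=F_t(z)$; this does not affect your argument, as you only use $\pi_1\#\zeta_t=\delta_{\theta(t)}$.
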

\begin{proof}
From the definition of admissible data, we know that for $\mathcal{L}^1-$ a.e $z\in[0,1]$, it holds $Q^{sat}(\theta_0(z),z,0)\geq q_0(z)$ for $\mathcal{L}^1-a.e$ $z\in[0,1]$. Since $supp\,\pi_2\#\zeta_0=(\theta_0+q_0)\#\mathcal{L}^1_{[0,1]}
\subset[-M,M]$, this means $||\theta_0+q_0||_{L^{\infty}}\leq M$.

That $\theta_t\in L^{\infty}([0,T)\times[0,1])\cap C([0,T),L^1([0,1]))$ follows from that $\theta_t\in C([0,T];Y_{B_2})$. Let $q_t(z)=\theta_0(F_t^*(z))+
q_0(F_t^*(z))-\theta(t,z)$. 
From the boundedness of $\theta_0+q_0$, we immediately get $q_t\in L^{\infty}([0,T)\times[0,1])$.

That $F_t(\cdot)\in C([0,T);L^1([0,1]))$ follows from that $\pi_{34}\#\eta_t=(id\times F_{t})\#\mathcal{L}^1_{[0,1]}$ is narrowly continuous.
From $\pi_{34}\#\eta_t\in\Gamma(\mathcal{L}^1_{[0,1]},\mathcal{L}^1_{[0,1]})$, 
we see $F_t\#\mathcal{L}^1_{[0,1]}=\mathcal{L}^1_{[0,1]}$, namely $F_t$ is measure preserving. 
Combined with the assumption $F_t^*\circ F_t=id$ shows that $(id\times F_t)\#\mathcal{L}^1_{[0,1]}=(F_t^*\times id)\#\mathcal{L}^1_{[0,1]}$, hence $F_t^*$ is also measure preserving.
As before, the narrow continuity of $t\longmapsto \pi_{34}\#\eta_t$ implies $F_t^*(\cdot)\in C([0,T);L^1([0,1]))$. 
This in turn implies $q_t\in C([0,T);L^1([0,1]))$.

From the definition of the space $X_{B_1}$, 
we see $F_{\cdot}(z)\in L^{\infty}([0,1];BV(0,T))$, 
with total variation $\leq C$. 
Next we will check through the points (i)-(vi) in the Definition \ref{1.3d}.

To see point (i), we observe that the measure $\pi_1\#\zeta_t\rightarrow
\pi_1\#\zeta_0$ narrowly. In other words, we have $\delta_{\theta_t}\rightarrow\delta_{\theta_0}$
narrowly in $\mathcal{P}(X_0)$. This implies $\theta_t\rightarrow\theta_0$ in $L^2([0,1])$. That $F_t\rightarrow id$ follows from that $\pi_{34}\#\eta_t=(id\times F_t)\#\mathcal{L}^1_{[0,1]}
\rightarrow(id\times id)\#\mathcal{L}^1_{[0,1]}$.

Point (ii) immediately follows from the definition of the space $Y$ and $Y_{B_2}$.

Point (iii) follows from the assumption of $F$ made in this lemma.

To see point (iv), Recall $q_t(z)=\theta_0(F_t^*(z))+
q_0(F_t^*(z))-\theta(t,z)$. Hence for any $t\in (0,T)$ $\hat{q}_t(z)=q_t(F_t(z))=\theta_0(F_t^*\circ F_t(z))+q_0(F_t^*\circ F_t(z))-\theta(t,F_t(z))=\theta_0(z)+
q_0(z)-\hat{\theta}_t(z)$, for $\mathcal{L}^1-$ a.e $z$.

Point (v) of Definition \ref{1.3d} follows from point (iii) of Definition \ref{1.6d}. Indeed, from point (iii), we know that for $\pi_{14}\#\lambda-a.e$ $(\tilde{\theta},\gamma)$, it holds $\tilde{\theta}_t(\gamma_t)\leq\tilde{\theta}_{t^{\prime}}(\gamma_{t^{\prime}})$ for $\mathcal{L}^2-$a.e $(t,t^{\prime})\in(0,T)^2$ with $t<t^{\prime}$. But $\pi_{14}\#\lambda=\delta_{\theta(t)}\times \Phi_F\#\mathcal{L}^1_{[0,1]}$. Hence for $\mathcal{L}^1-a.e\,z$, it holds $\theta_t(F_t(z))\leq\theta_{t^{\prime}}(F_{t^{\prime}}(z))$ for $\mathcal{L}^2$-a.e $(t,t')\in(0,T)^2$ with $t<t'$.

To see the last point, we know from our assumption on $\lambda$ that for $\lambda-a.e\,(\tilde{\theta},s,z,\gamma)$, it holds $s=\theta_0(z)+q_0(z)$, and $\gamma_t=F_t(z)$. For $(\tilde{\theta},s,z,\gamma)$ such that this holds, we know
\begin{equation*}
\begin{split}
E_{\theta,s,\gamma}=E_{\theta,\theta_0(z)+q_0(z),F_{\cdot}(z)}&=\{t\in(0,T):(\theta(F))^*_t(z)\\
&=\theta_0(z)+q_0(z)-Q^{sat}((\theta(F))^*_t(z),F_t(z),t)\}.
\end{split}
\end{equation*}
We have seen in the above proof that for $\mathcal{L}^1-a.e\,z$, we have $\hat{q}_t(z)=\theta_0(z)+q_0(z)-
\theta_t(F_t(z))$. Hence if we choose the monotone and left continuous representative, we have $\hat{q}_t^*(z)=\theta_0(z)
+q_0(z)-(\theta(F))_t^*(z)$.  Hence for $\lambda-a.e\,(\tilde{\theta},s,z,\gamma)$, $E_{\theta,s,\gamma}=E_z$, where $E_z$ is given in point (vi) of Definition \ref{1.3d}. Finally the measure theoretic equation holds $\lambda-a.e$. Since $\pi_2\#\lambda=\mathcal{L}^1_{[0,1]}$, we see the point (vi) of Definition \ref{1.3d} holds.
\end{proof}
The main existence theorem we will prove will be the following:
\begin{thm}
Let $\zeta_0$ be an admissible initial data, then there exists a measure valued solution to (\ref{1.1})-(\ref{1.4}) with initial data $\zeta_0$.
\end{thm}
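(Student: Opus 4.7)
The plan is to realize the measure-valued solution $\lambda$ as the narrow limit of a family $\{\lambda^n\}$ built directly from the discrete scheme of Section 3, with compactness supplied by Theorem \ref{3.1t}.

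\emph{Step 1 (discretization of the initial data).} Using the disintegration theorem, write $\zeta_0 = \int d\mu_0(\theta)\,d\zeta_{\theta,z}(s)\,dz$. For each $n$ satisfying $n\delta t = 1/(2C_4^{\prime})$ (so that Lemma \ref{3.4nl} applies), I would build an admissible discrete approximation $\zeta_0^n \to \zeta_0$ narrowly: sample $\theta\sim\mu_0$, set $\theta_j^n = \theta(\frac{j-1}{n})$, draw $s_j$ independently from $\zeta_{\theta,z_j}$, put $q_j^n = s_j - \theta_j^n$, and truncate/reorder slightly so that the discrete analogues of (\ref{1.6}) and (\ref{1.10}) hold at $t=0$. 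Admissibility of $\zeta_0$ ensures the truncation is negligible as $n\to\infty$.

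\emph{Step 2 (construction of $\lambda^n$ and compactness).} For each discrete configuration in the support of $\zeta_0^n$, the algorithm of Section 3 yields $\theta^n(t,\cdot)$ and $F_t^n$. Define $\Psi^n:[0,1]\to Y\times\mathbb{R}\times[0,1]\times X_{B_1}$ by $\Psi^n(z)=(\tilde{\theta}^n,\theta_0^n(z)+q_0^n(z),z,F_\cdot^n(z))$, choosing $\tilde{\theta}_t^n$ right-continuous in $z$ and $F_\cdot^n(z)$ left-continuous in $t$. Let $\lambda^n$ be the push-forward of $\mathcal{L}^1_{[0,1]}$ under $\Psi^n$, averaged over $\zeta_0^n$. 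The uniform bounds in Theorem \ref{3.1t}(i) and (iii) confine the second and fourth marginals to compact subsets of $\mathbb{R}$ and $X_{B_1}$ respectively (the latter by Helly's selection principle applied to the $L^2$ metric on $X_{B_1}$); the $L^1$-continuity estimate (v) combined with monotonicity in $z$ gives compactness of $\tilde{\theta}^n$ in $Y=C([0,T);Y_{B_3})$. Tightness of $\{\lambda^n\}$ follows, and Prokhorov produces a subsequential narrow limit $\lambda^n\rightharpoonup\lambda$.

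\emph{Step 3 (verification of (i)--(iii) of Definition \ref{1.6d}).} The initial condition (i) passes to the limit from $\zeta_0^n\to\zeta_0$ together with the uniform continuity estimates (v), (vi) of Theorem \ref{3.1t}, which give narrow convergence of $\zeta_t^n$ to $\zeta_0$ as $t\to 0$ uniformly in $n$. Point (ii), including the pointwise saturation constraint, follows from Corollary \ref{2.2c}(iv) via Lemma \ref{1.1l} and narrow convergence. The monotonicity property (iii) is inherited from Corollary \ref{2.2c}(iii): testing the discrete inequality against nonnegative symmetric functions of $(t,t^{\prime})$ supported in $\{t<t^{\prime}\}$ and integrating against $\lambda^n$ preserves the sign in the limit.

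\emph{Step 4 (verification of (iv), the main obstacle).} This is the hardest point. The discrete analogue is Theorem \ref{3.1t}(iv), which says that increments of $\hat{\theta}^n$ coincide, up to $O(\eps+\delta t)$, with the positive-part increment of $\Theta(\theta^{M,n},F_\cdot^n,\cdot)$. I would rephrase this, via Lemma \ref{1.1l}, as a weak identity against any $\vphi\in C_c^1((0,T))$ for the signed measures $\partial_t(\tilde{\theta}_t(\gamma_t))$ and $\partial_tQ^{sat}(\tilde{\theta}_t(\gamma_t),\gamma_t,t)$, both of which are finite by the BV bound in Theorem \ref{3.1t}(iii). The difficulty is that the wet set $E_{\tilde{\theta},s,\gamma}$ is defined by an equality and is not preserved under narrow limits; the remedy is to disintegrate $\lambda$ over $(s,z)$ and, exploiting Corollary \ref{3.3c} (which forces $\partial_t(\tilde{\theta}(\gamma))=0$ on the strictly dry region), verify the identity separately on $E_{\tilde{\theta},s,\gamma}$ and its complement using a Fatou-type argument for the one-sided bracket $[\cdot]^-$. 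This is where the careful choice of left-continuous representatives (Remark \ref{1.4r}) and the a priori estimates conspire to yield the limiting equation.
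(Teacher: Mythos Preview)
Your outline follows the same global strategy as the paper (discretize, construct $\lambda^n$, extract a limit, verify Definition~\ref{1.6d}), and your compactness argument in Step~2 is essentially right, modulo the need to replace $\theta^n$ by a time-linear interpolant $\bar{\theta}^n$ to land in $Y=C([0,T);Y_{B_3})$. But there are two genuine gaps.

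\emph{Discontinuity of the evaluation functionals.} In Steps~3 and~4 you implicitly assume that quantities like $\tilde{\theta}_t(\gamma_t)$, $\gamma_t$, and $\chi_{\{\tilde{\theta}_t(\gamma_t)>\Theta(s,\gamma_t,t)+\eps\}}$ pass to the narrow limit. They do not: the map $e_t:X_{B_1}\to[0,1]$, $\gamma\mapsto\gamma_t$, is only Borel (Lemma~\ref{4.3a}), and $(\tilde{\theta},\gamma)\mapsto\tilde{\theta}_t(\gamma_t)$ is discontinuous both because $\gamma_t$ jumps and because convergence in $Y_{B_3}$ is only $L^2$. The paper repairs this by introducing averaging operators $A_\eps$ on $Y_{B_3}$ and $A'_\eps$ on $X_{B_1}$, proving the regularized functionals are continuous (Lemma~\ref{5.6l}), and controlling the error $\int|\tilde{\theta}_t(\gamma_t)-(A_\eps\tilde{\theta})_t(\gamma_t)|\,d\lambda^n\le B_3\eps$ \emph{uniformly in $n$} (Lemma~\ref{6.4ln}) via the measure-preserving property of $F^n_{\sigma,t}$. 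Without this device, your passage to the limit in (ii), (iii) and especially (iv) is unjustified; in particular, even $\eta_t^n\to\eta_t$ requires a separate argument (Lemma~\ref{5.3l}).

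\emph{The measure identity in (iv).} Your plan to ``verify the identity separately on $E_{\tilde{\theta},s,\gamma}$ and its complement using a Fatou-type argument'' is too vague to be a proof. The paper does not proceed by disintegration or Fatou; instead it first establishes, for $\lambda$-a.e.\ $(\tilde{\theta},s,z,\gamma)$, three pathwise properties of $\alpha(t):=(\tilde{\theta}(\gamma))^*(t)$ and $g(t):=s-Q^{sat}(\alpha(t),\gamma_t,t)$ (Lemma~\ref{5.11l}, Corollary~\ref{5.14c}), and then invokes an abstract real-variable lemma (Lemma~\ref{6.1}) asserting that any left-continuous BV pair $f\ge g$ satisfying those three conditions must obey $\partial_t f=(\partial_t g)^+\lfloor\{f=g\}$. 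This lemma, together with its preparatory estimate Lemma~\ref{6.2}, is the real engine behind point~(iv) and is not a consequence of Theorem~\ref{3.1t}(iv) alone; you would need to discover and prove something equivalent.
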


\section{Existence of measure valued solutions}

In this section, we will show the existence of measure valued solutions to any admissible data defined in previous section.

The plan is the following: Let $\zeta_0$ be an admissible initial data, 
then we approximate $\zeta_0$ by convex combinations of discrete and "deterministic" initial data. 
For each measure appearing in the convex combination, 
we can run the discrete procedure described in section 3.
 The question is to show one can take the limit of this approximation and the properties given in Definition \ref{1.6d} hold in the limit.

\subsection{Discretizing the initial data}
We assume $\pi_2\#\zeta_0\subset[-M+1,M-1]$ for some $M>0$. Write $K=M-1$. As a preliminary step, we note the following:
\begin{lem}\label{5.1l}
Let $\zeta_0$ be an admissible data. 
Then there exists a Borel family of probability measures $\{\alpha_{\theta}\}_{\theta\in Y_{B_2}}\subset\mathcal{P}(\mathbb{R}\times[0,1])$, such that for $\mu_0-a.e$ $\theta$,
 supp $\pi_1\#\alpha_{\theta}\subset[-K,K]$,  $\pi_2\#\alpha_{\theta}=\mathcal{L}^1_{[0,1]}$, $\theta(z)\geq\Theta(s,z,0)$ for $\alpha_{\theta}-a.e$ $(s,z)$ and for any bounded Borel function $f(\theta,s,z):
Y_{B_2}\times\mathbb{R}
\times[0,1]\rightarrow\mathbb{R}$, it holds:
\begin{equation}\label{5.1}
\int_{Y_{B_2}\times\mathbb{R}\times[0,1]}
f(\theta,s,z)d\zeta_0(\theta,s,z)=\int_{Y_{B_2}}
d\mu_0(\theta)\bigg[\int_{\mathbb{R}\times[0,1]}f(\theta,s,z)d\alpha_{\theta}(s,z)\bigg].
\end{equation}

\end{lem}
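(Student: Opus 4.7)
The plan is to apply the disintegration theorem to $\zeta_0$ with respect to the projection $\pi_1 : Y_{B_2}\times\mathbb{R}\times[0,1]\to Y_{B_2}$, and then check that each of the three pointwise properties (mass conservation in $z$, compact support in $s$, and the physical constraint) is inherited by $\mu_0$-almost every fibre $\alpha_\theta$.

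First I would observe that $Y_{B_2}$ (equipped with the $L^2$ metric) and $\mathbb{R}\times[0,1]$ are both Polish spaces, and that $\pi_1\#\zeta_0=\mu_0$ follows by taking $\pi_1 = \pi_1\circ\pi_{13}$ on admissibility condition (i). The standard disintegration theorem then yields a $\mu_0$-a.e.~uniquely determined Borel family $\{\alpha_\theta\}_{\theta\in Y_{B_2}}\subset\mathcal{P}(\mathbb{R}\times[0,1])$ for which (\ref{5.1}) holds for every bounded Borel $f$.

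Next I would verify the three pointwise properties by plugging suitable test functions into (\ref{5.1}). For $\pi_2\#\alpha_\theta=\mathcal{L}^1_{[0,1]}$: fix $h:Y_{B_2}\to\mathbb{R}$ bounded Borel and $g:[0,1]\to\mathbb{R}$ continuous, and compare the two ways of computing $\int h(\theta)g(z)\,d\zeta_0$—via (\ref{5.1}) on one side, and via $\pi_{13}\#\zeta_0=\mu_0\times\mathcal{L}^1_{[0,1]}$ on the other—to obtain
\begin{equation*}
\int_{Y_{B_2}} h(\theta)\Bigl[\int_{\mathbb{R}\times[0,1]} g(z)\,d\alpha_\theta(s,z) - \int_0^1 g(z)\,dz\Bigr] d\mu_0(\theta)=0.
\end{equation*}
Since $h$ is arbitrary, the inner bracket vanishes for $\mu_0$-a.e.~$\theta$; choosing a countable dense family $\{g_n\}\subset C([0,1])$ and taking a common full-measure set gives $\pi_2\#\alpha_\theta=\mathcal{L}^1_{[0,1]}$ for $\mu_0$-a.e.~$\theta$. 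For the support condition, I would take $f(\theta,s,z)=\mathbf{1}_{\{|s|>K\}}(s)$ in (\ref{5.1}); since $\pi_2\#\zeta_0\subset[-K,K]$ by hypothesis, the left side is zero, so $\alpha_\theta(\{|s|>K\}\times[0,1])=0$ for $\mu_0$-a.e.~$\theta$. For the physical constraint, I would take the indicator of the set $B=\{(\theta,s,z):\theta(z)<\Theta(s,z,0)\}$, which is Borel by Lemma \ref{4.3a} combined with continuity of $\Theta$; admissibility (ii) and Lemma \ref{1.1l} together say $\zeta_0(B)=0$, so by (\ref{5.1}) the $\alpha_\theta$-measure of the $\theta$-section of $B$ vanishes $\mu_0$-a.e., which is exactly $\theta(z)\geq\Theta(s,z,0)$ for $\alpha_\theta$-a.e.~$(s,z)$.

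The only mildly delicate point is the Borel measurability of the exceptional set $B$, but this is handled by Lemma \ref{4.3a}, which shows that the evaluation map $(\theta,z)\mapsto\theta(z)$ on $Y_{B_2}\times[0,1]$ is Borel. Once that is in hand, every step above is a direct application of disintegration plus a countable-family argument to pass from ``$\mu_0$-a.e.~$\theta$'' statements for individual test functions to a single $\mu_0$-full-measure set on which all properties hold simultaneously. Modifying $\alpha_\theta$ arbitrarily on the remaining $\mu_0$-null set (e.g.~setting it equal to $\delta_0\times\mathcal{L}^1_{[0,1]}$ there) produces the required Borel family defined on all of $Y_{B_2}$.
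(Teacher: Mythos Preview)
Your proposal is correct and follows essentially the same approach as the paper: invoke the disintegration theorem to obtain the family $\{\alpha_\theta\}$ satisfying (\ref{5.1}), then verify the three pointwise properties by plugging in suitable test functions (products $f_1(\theta)f_2(z)$ for the marginal condition with a countable dense family in $C([0,1])$, the indicator of $\{|s|>K\}$ for the support condition, and the indicator of the constraint set for the physical inequality, using Lemma~\ref{4.3a} for its Borel measurability). The paper's proof is slightly terser but otherwise identical in structure.
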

\begin{proof}
The existence of this family of probability measures satisfying the integral identity (\ref{5.1}) is the standard disintegration theorem, see Theorem 5.3.1 of \cite{Gradient flows}. 
We just need to check that supp $\pi_1\#\alpha_{\theta}\subset[-K,K]$, $\pi_2\#\alpha_{\theta}=\mathcal{L}^1_{[0,1]}$, $\theta(z)\geq\Theta(s,z,0)$ for $\alpha_{\theta}-a.e$ $(s,z)$.

To see supp $\pi_1\#\alpha_{\theta}\subset[-K,K]$, take $f(\theta,s,z)=f_1(\theta)\chi_{[-K,K]^c}(s)$, and $f_1:Y_{B_2}\rightarrow\mathbb{R}$ bounded and Borel measurable. Then we see 
$$|\int f_1(\theta)\chi_{[-K,K]^c}(s)d\zeta_0(\theta,s,z)|\leq \sup|f|\int\chi_{[-K,K]^c}(s) d\zeta_0(\theta,s,z)=0.$$
From (\ref{5.1}), we know for any choice of bounded Borel function $f_1$, one has 
$$\int f_1(\theta)d\mu_0(\theta)\big[\int \chi_{[-K,K]^c}d\alpha_{\theta}(s,z)\big]=0.$$
This implies $\int \chi_{[-K,K]^c}(s)d\alpha_{\theta}(s,z)=0$ for $\mu_0-a.e$ $\theta$. For such $\theta$, we have supp $\pi_1\#\alpha_{\theta}\subset[-K,K]$.

To see $\pi_2\#\alpha_{\theta}=\mathcal{L}^1_{[0,1]}$,  one can similarly take $f(\theta,s,z)=f_1(\theta)f_2(z)$, using that $\pi_{13}\#\zeta_0=
\mu_0\times\mathcal{L}^1_{[0,1]}$, 
one concludes for any choice of $f_2(z)$ bounded and Borel on $[0,1]$ it holds $\int_{\mathbb{R}\times[0,1]} f_2(z)d\alpha_{\theta}(s,z)=\int f_2(z)dz$ for $\mu_0-a.e$ $\theta$. 
One just needs to choose a countable dense subset $\{f_2^n\}_{n\geq1}$ of $C([0,1])$, 
apply this argument with $f_2=f_2^n$ and concludes for $\mu_0-a.e$ $\theta$, $\pi_2\#\alpha_{\theta}=\mathcal{L}^1_{[0,1]}$.

To see that $\theta(z)\geq\Theta(s,z,0)$ for $\alpha_{\theta}-a.e$ $(s,z)$, we integrate $\chi_{\{(\theta,s,z):\theta(z)\geq\Theta(s,z,0)\}}$, and use (\ref{5.1}). By Lemma \ref{4.3a}, such a function is bounded and Borel, hence its integral is well defined.
\end{proof}
Due to Lemma \ref{5.1l}, we can write $\zeta_0=\int_{Y_{B_2}}\delta_{\theta}\times\alpha_{\theta}(s,z)d\mu_0(\theta)$, and for $\mu_0-a.e$ $\theta$, $\alpha_{\theta}(s,z)$ satisfy the "correct" condition mentioned in Lemma \ref{5.1l} Next we will construct discretization of $\alpha_{\theta}$ for each fixed such $\theta$. 

Recall we have shown supp $\alpha_{\theta}\subset[-K,K]\times[0,1]$, 
Define $K_j=[-K+\frac{(j-1)2K}{n},-K+\frac{j\cdot2K}{n}]$ for $1\leq j\leq n$, call $w_j=-K+\frac{j\cdot2K}{n}$. Suppose $\alpha_{\theta}(K_j\times J_i)>0$, 
since we assumed $\theta(z)\geq\Theta(s,z,0)$ for $\alpha_{\theta}-$a.e $(s,z)$, we have, for some $(\tilde{\alpha}_{ij},z_{ij})\in K_j\times J_i$, 
we have $\theta(z_{ij})\geq\Theta(\tilde{\alpha}_{ij},z_{ij},0)$.
Therefore, for some universal constant $C>2K+\frac{\sup|\partial_z\Theta|}{\inf\partial_w\Theta}$, if we define $\alpha_{ij}=w_j-\frac{C}{n}$, we have
$$
\theta(z_i)\geq\theta(z_{ij})\geq\Theta(\tilde{\alpha}
_{ij},z_{ij},0)\geq \Theta(\tilde{\alpha}_{ij},z_i,0)-\frac{\sup|\partial_z\Theta|}{n}\geq\Theta(w_j-\frac{C}{n},z_i,0).
$$
There is no loss of generality to assume $n$ is chosen sufficiently large so that $\frac{C}{n}<1$. Now we can define a measure $\alpha_{\theta}^n$ which is an approximation to $\alpha_{\theta}$, by putting:
$$
\alpha_{\theta}^n=\sum_{i=1}^n\sum_{j\in H_{\theta,i}}\chi_{J_i}(z)dz\cdot n\alpha_{\theta}(K_j\times J_i)\delta_{\alpha_{ij}}(s).
$$
Here we denote 
\begin{equation}
H_{\theta,i}=\{j:1\leq j\leq n,\,\alpha_{\theta}(K_j\times J_i)>0\}.
\end{equation}
\begin{lem} $\alpha_{\theta}^n\rightarrow\alpha_{\theta}$ narrowly in $\mathcal{P}(\mathbb{R}\times[0,1])$.
\end{lem}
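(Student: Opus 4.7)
The plan is to prove narrow convergence directly from the definition: for every $f \in C_b(\mathbb{R} \times [0,1])$ show that $\int f\, d\alpha_\theta^n \to \int f\, d\alpha_\theta$. The first observation is that all the measures in question are supported in a common compact set. By construction $\mathrm{supp}\,\alpha_\theta \subset [-K,K]\times[0,1]$, and the discrete measure puts its $s$-mass at the points $\alpha_{ij} = w_j - C/n \in [-K-C/n, K]$, so for $n$ large we have $\mathrm{supp}\,\alpha_\theta^n \cup \mathrm{supp}\,\alpha_\theta \subset \Omega := [-2K,K]\times[0,1]$. Hence it suffices to test against functions $f \in C(\Omega)$, and such $f$ are automatically uniformly continuous.

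Next, fix $f \in C(\Omega)$ and $\eps > 0$. Choose a modulus $\delta > 0$ such that $|f(s,z) - f(s',z')| < \eps$ whenever $|s - s'| + |z - z'| < \delta$. Then take $n$ large enough that $2K/n + C/n < \delta/2$ and $1/n < \delta/2$. Writing
\[
\int f\, d\alpha_\theta = \sum_{i,j} \int_{K_j \times J_i} f(s,z)\, d\alpha_\theta(s,z),\qquad
\int f\, d\alpha_\theta^n = \sum_{i,j} n\alpha_\theta(K_j\times J_i) \int_{J_i} f(\alpha_{ij},z)\, dz,
\]
I will estimate the difference cell-by-cell. Pick any reference point $z_i \in J_i$. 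For $(s,z) \in K_j \times J_i$ we have $|s - \alpha_{ij}| < \delta/2$ and $|z - z_i| < \delta/2$, so uniform continuity gives
\[
\left| \int_{K_j \times J_i} f(s,z)\, d\alpha_\theta(s,z) - f(\alpha_{ij}, z_i)\, \alpha_\theta(K_j \times J_i) \right| \le \eps\, \alpha_\theta(K_j \times J_i).
\]
Similarly, since $n\int_{J_i} dz = 1$ and $|f(\alpha_{ij},z) - f(\alpha_{ij},z_i)| < \eps$ for $z \in J_i$,
\[
\left| n\alpha_\theta(K_j \times J_i)\int_{J_i} f(\alpha_{ij},z)\, dz - f(\alpha_{ij}, z_i)\, \alpha_\theta(K_j \times J_i) \right| \le \eps\, \alpha_\theta(K_j \times J_i).
\]

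Summing over $i,j$ and using $\sum_{i,j} \alpha_\theta(K_j \times J_i) = \alpha_\theta(\mathbb{R} \times [0,1]) = 1$, the triangle inequality yields $|\int f\, d\alpha_\theta^n - \int f\, d\alpha_\theta| \le 2\eps$. Letting $\eps \to 0$ proves narrow convergence. I anticipate no serious obstacle: the argument is just a standard cell-averaging estimate, and the only point requiring any care is the verification that $\alpha_\theta^n$ and $\alpha_\theta$ live in a common compact set so that uniform continuity is available. The inclusion $j \in H_{\theta,i}$ in the definition of $\alpha_\theta^n$ can be ignored in the sums above because the omitted terms have $\alpha_\theta(K_j \times J_i) = 0$ and contribute nothing to either side.
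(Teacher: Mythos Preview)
Your proof is correct and follows essentially the same cell-by-cell comparison as the paper's own argument: both decompose the integrals over the boxes $K_j\times J_i$, compare each piece to a common reference value, and sum the errors using that $\alpha_\theta$ is a probability measure. The only cosmetic difference is that the paper restricts to $1$-Lipschitz test functions (which suffice for narrow convergence) and bounds the error directly by $C/n$, while you instead invoke the common compact support to get uniform continuity of an arbitrary $f\in C_b$; the underlying estimate is identical.
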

\begin{proof}
Let $f\in C_b(\mathbb{R}\times[0,1])$ with Lipschitz constant 1, and we denote $H_i=H_{\theta,i}$ for simplicity.
\begin{equation*}
\begin{split}
&\int f(s,z)d\alpha_{\theta}(s,z)=\sum_{i=1}^n\sum_{J\in H_i}\int_{K_j\times J_i}f(z,s)d\alpha_{\theta}(s,z)\\
&=\sum_{i=1}^n\sum_{j\in H_i}\int_{K_j\times J_i}\big(f(z,s)-f(z_i,w_j-\frac{C}{n})\big)d\alpha_{\theta}(s,z)+\sum_{i=1}^n\sum_{j\in H_i}f(z_i,w_j-\frac{C}{n})\alpha_{\theta}(K_j\times J_i)\\
=\sum_{i=1}^n&\sum_{j\in H_i}\int_{J_i}f(z,w_j-\frac{C}{n})dz\cdot n\alpha_{\theta}(J_i\times K_j)+\sum_{i=1}^n\sum_{j\in H_i}\int_{J_i\times K_j}\big(f(z,s)-f(z_i,w_j-\frac{C}{n})\big)d\alpha_{\theta}(z,s)\\
&+\sum_{i=1}^n\sum_{j\in H_i}\int_{J_i}f(z_i,w_j-\frac{C}{n})-f(z,w_j-\frac{C}{n})dz\cdot n\alpha_{\theta}(J_i\times K_j).
\end{split}
\end{equation*}
The first term above is exactly the integral of $f$ with respect to $\alpha_0^n$. The last two terms will go to zero, because in each term of the sum, the integrand is  controlled by $\frac{C}{n}$.
\end{proof}
For any $B>0$ and each $\theta\in Y_{B}$, we can define $D^n:Y_{B}\rightarrow Y_{B}$ by putting $D^n(\theta)=\sum_{i=1}^n\theta(z_i)\chi_{J_i}$. Then for each fixed $\theta$, it holds $D^n(\theta)\rightarrow \theta$ in $Y_{B}$. Now with choice $B=B_2$.
By putting $\zeta_0^n=\int_{Y_{B_2}}\delta_{D^n(\theta)}\times\alpha^n_{\theta}(s,z)d\mu_0(\theta)$, 
we then have $\zeta_0^n\rightarrow\zeta_0$ narrowly in $\mathcal{P}(Y_{B_2}\times\mathbb{R}\times[0,1])$ as $n\rightarrow\infty$. Besides, $\zeta_0^n$ satisfies the following properties:

(i)$\pi_{13}\#\zeta_0^n=(D^n
\#\mu_0)\times\mathcal{L}^1_{[0,1]}$, supp $\pi_2\#\zeta_0^n\subset[-K-1,K+1]=[-M,M]$;

(ii)$\theta(z_i)\geq\Theta(w_j-\frac{C}{n},z_i,0)$, whenever $j\in H_{\theta,i}$, or $\theta(z)\geq\Theta(s,z,0)$ for $\zeta_0^n-a.e$ $(\theta,s,z)$.

For simplicity of notation, we will write
\begin{equation}\label{5.3}
\alpha_{\theta}(s,z)=\sum_{i=1}^n\sum_{j\in H_{\theta,i}}n\chi_{J_i}(z)\cdot\mu_{ij}^{\theta}\delta_{\alpha_{ij}}(s),\textrm{ with $\mu_{ij}^{\theta}=\alpha_{\theta}(K_j\times J_i)$, and $\alpha_{ij}=w_j-\frac{C}{n}$.}
\end{equation}
\subsection{Construction of approximate solutions and passage to limit}
The measures $\zeta_0^n$ determines a sequence of discrete probability distributions. 
For each choice of $\theta$, $\alpha_{\theta}$ prescribes the probability distribution of $\theta^M(z)$ for each fixed $z$. 
More precisely, when $z\in J_i$, the possible values of $\theta^M$ are given by $\alpha_{ij}$, with probability $\mu_{ij}$.

In order to apply the discrete procedure, we will make a random choice of $\alpha_{ij}$ on each $J_i$, and this gives us a "deterministic" and discrete initial $\theta^M$. 
Then we run the discrete procedure, with this $\theta^M$ as initial data and it gives us a evolution, with probability determined by the choice of $\alpha_{ij}$.

First we make a random choice of the $\alpha_{ij}$, allowed by the physical constraint. 
Denote $S_{\theta}$ to be the set of functions $\sigma:\{1,2,\cdots,n\}\rightarrow\{1,2,\cdots,n\}$, such that for each $i$, $\sigma(i)\in H_{\theta,i}$. 
Determine a discrete initial $\theta_0^{M,n}$ from $\sigma$ by prescribing:
$$
\theta^{M,n}_{0,\sigma}(z)=\sum_{i=1}^n
\alpha_{i\sigma(i)}\chi_{J_i}(z),\textrm{ or $\theta^{M,n}_{0,\sigma,i}=\alpha_{i\sigma(i)}$.}
$$
The probability of such a choice is given by $n^n\mu_{1\sigma(1)}\mu_{2\sigma(2)}\cdots
\mu_{n\sigma(n)}$.  Notice here that $\sum_j\mu_{ij}=\frac{1}{n}$, hence $\sum_{\sigma}\mu^{\theta}_{1\sigma(1)}\cdots\mu^{\theta}_{n\sigma(n)}=n^{-n}$.
It is straightforward to check that $\alpha_{\theta}$ given by (\ref{5.3}) is equal to
\begin{equation}\label{5.4}
\alpha_{\theta}=\sum_{\sigma\in S_{\theta}}n^n\mu_{1\sigma(1)}^{\theta}\cdots\mu_{n\sigma(n)}^{\theta}(\theta_{0,\sigma}^{M,n}\times id)\#\mathcal{L}^1_{[0,1]}.
\end{equation}
After we make such a choice, we apply the discrete procedure described in section 3 to $\{\theta(z_i)\}_{i=1}^n$,
 and $\{\theta^{M,n}_{\sigma,i}-\theta(z_i)\}_{i=1}^n$, with time step size $\delta t=\frac{1}{nC_4^{\prime}}$, 
here $C_4'$ is given by Theorem \ref{3.1t} in order for all the estimates in that theorem to hold. Notice that $\theta(z_i)\leq\theta(z_{i+1})$, 
and $\theta(z_i)\geq\Theta(\theta^{M,n}_{\sigma,i},z_i,0)$, hence it satisfies the assumptions made in the beginning of section 2. 
Denote $\{\theta_{\sigma,j}^n(k\delta t)\}_{1\leq j\leq n,0\leq k\leq\frac{T}{\delta t}}$, $\{\theta^{M,n}_{\sigma,j}(k\delta t)\}_{1\leq j\leq n,0\leq k\leq\frac{T}{\delta t}}$ to be the discrete solutions contructed according to section 2 and we adopt similar notations as there, 
but here with dependence on $\sigma$.
Define
\begin{align}\label{0}
&\theta_0^n(z)=\sum_i\theta(z_i)\chi_{J_i}.\\
\label{1}
&\theta_{\sigma}^n(t,z)=\theta^n_{\sigma,j}(k\delta t),\\
\label{2}
&\theta^{M,n}_{\sigma}(t,z)=\theta^{M,n}_{\sigma,j}(k\delta t),\\
\label{3}
&\bar{\theta}^n_{\sigma}(t,z)=\frac{(k+1)\delta t-t}{\delta t}\theta^n_{\sigma,j}(k\delta t)+\frac{t-k\delta t}{\delta t}\theta^n_{\sigma,j}((k+1)\delta t),
\end{align}
for $k\delta t\leq t<(k+1)\delta t$ and $z\in J_j$.

Denote $F^n_{\sigma}(t,z)$ be the discrete flow map constructed in section 3. Since we know $\theta\in Y_{B_2}$, we see $||\theta_0^n||_{L^{\infty}}\leq B_2$, and $||\theta_{0,\sigma}^{M,n}||_{L^{\infty}}\leq M$. Hence $||\theta_0^n||_{L^{\infty}}+||q_0^n||_{L^{\infty}}\leq M+2B_2$.
We know from point (iii) of Theorem \ref{3.1t}, as well as the choice of the constant $B_1$ made in the paragraph before Definition \ref{1.6d}, 
that $F_{\sigma}(\cdot,z)\in X_{B_1}$, namely $TV_{t\in(0,T)}(F_{\sigma}^n(t,z))\leq B_1$.  
Similarly we have $||\theta^n_{\sigma}||_{L^{\infty}}\leq B_3$, according to the point (i) of Theorem \ref{3.1t}, as well as the choice of $B_3$ made before Definition \ref{1.6d}.
In particular, for each fixed $t\in[0,T)$, it holds $\bar{\theta}^n_{\sigma}(t,\cdot)\in Y_{B_3}$. Hence $\bar{\theta}^n_{\sigma}\in C([0,T);Y_{B_3})=Y$.

Let $\Phi_{\sigma}^n:[0,1]\rightarrow X_{B_1}$ be defined by $\Phi_{\sigma}^n(z)=F_{\sigma}^n(\cdot,z)$.
From the discussion in the previous paragraph, we know $\Phi_{\sigma}^n$ indeed maps $[0,1]$ into the space $X_{B_1}$ and is easily seen to be a Borel map.

Now we form the probability measure:
\begin{equation}\label{4.3}
\lambda^n=\int_{Y_{B_2}}d\mu_0(\theta)\sum_{\sigma\in S_{\theta}}n^n\mu^{\theta}_{1\sigma(1)}\cdots
\mu^{\theta}_{n\sigma(n)}
\big(\delta_{\bar{\theta}^n_{\sigma}}\times(
\theta^{M,n}_{0,\sigma}\times id\times \Phi_{\sigma}^n)\#\mathcal{L}^1_{[0,1]}\big).
\end{equation}
Then we see $\lambda^n\in\mathcal{P}(Y\times\mathbb{R}\times[0,1]\times X_{B_1})$, by our previous condtruction, and this will be our approximate solutions.
Following the notations in Definition \ref{1.6d}, define $\eta_t^n=(e_t\times id\times id\times e'_t)\#\lambda^n$,  we have
\begin{equation}\label{6.10new}
\eta_t^n=\int_{Y_{B_2}}d\mu_0(\theta)\sum_{\sigma\in S_{\theta}}n^n\mu^{\theta}_{1\sigma(1)}\cdots
\mu^{\theta}_{n\sigma(n)}
\big(\delta_{\bar{\theta}^n_{\sigma}(t)}\times(
\theta^{M,n}_{0,\sigma}\times id\times F^n_{\sigma,t})\#\mathcal{L}^1_{[0,1]}\big).
\end{equation}

In order to take limit, we need to show that $\lambda^n$ is tight. For this, we just need to show $\pi_k\#\lambda^n$ is tight, for $k=1,2,3,4$, where $\pi_k$ is the projection map onto the $k$-th component. Indeed, we have 
$$
\pi_1\#\lambda^n=\int_{Y_{C}}
d\mu_0(\theta)\sum_{\sigma\in S_{\theta}}n^n\mu^{\theta}_{1\sigma(1)}
\cdots\mu^{\theta}_{n\sigma(n)}
\delta_{\theta^n_{\sigma}}.
$$
Because of point (v) in Theorem \ref{3.1t}, we see that for any $\sigma\in S_{\theta}$, and $s,t\in[0,T)$, with $t-s>\frac{\delta t}{2}$ we have
$$
||\theta^n_{\sigma}(s,\cdot)-\theta^n_{\sigma}(t,\cdot)||_{L^1}\leq C_5\sqrt{3(t-s)}.
$$
The constant $C_5$ is given by point (v) of Theorem \ref{3.1t} when we have the bound $||\theta_0^n||_{L^{\infty}}+||q_0^n||_{L^{\infty}}\leq M+2B_2$.
Recalling (\ref{3}), it follows that $\bar{\theta}^n_{\sigma}$ satisfies the same estimate, but without the restriction $t-s>\frac{\delta t}{2}$. Therefore, $\pi_1\#\lambda^n$ is concentrated on the compact set:
$$
F=\{f\in Y:||f(t,\cdot)-f(s,\cdot)||_{L^2}\leq C_5\sqrt{3(t-s)}\textrm{ for any $s<t$}\}.
$$
That this is a compact set follows from the Arzela-Ascoli theorem and the compactness of the space $Y_{B_3}$.
Note that the functions in $Y$ has bound in $L^{\infty}$, so convergence in $L^p$ are equivalent for any $p<\infty$.

As for $\pi_2\#\lambda^n$, since for all $\sigma\in S_{\theta}$, we know from the construction of discrete solutions that $||\theta^{M,n}_{0,\sigma}||_{L^{\infty}}\leq M$. Hence $\supp\,\pi_2\#\lambda^n\subset[-M,M]$, hence is tight. 

There is nothing to prove for $\pi_3\#\lambda^n$ and $\pi_4\#\lambda^n$, as $[0,1]$ and $X_{B_1}$ are compact spaces. 

So up to extracting a subsequence, we can pass to limit for the measure defined in (\ref{4.3}), 
and we denote a limit measure to be $\lambda\in\mathcal{P}(Y\times\mathbb{R}\times[0,1]\times X_{B_1})$.
It only remains to show $\lambda$ is a solution.
Finally we record a lemma which will be useful in the next subsection.
\begin{lem}\label{5.3l}
Define $\eta_t^n=(e_t^{\prime}\times id\times id\times e_t)\#\lambda^n$, then $\eta_t^n\rightarrow \eta_t$ for any $t\in (0,T)$ as $n\rightarrow\infty$.
\end{lem}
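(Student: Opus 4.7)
My plan is to establish narrow convergence $\eta_t^n\to\eta_t$ in $\mathcal{P}(Y_{B_3}\times\mathbb{R}\times[0,1]\times[0,1])$, where $\eta_t:=(e_t'\times id\times id\times e_t)\#\lambda$. The key obstacle is that, while $e_t'$ is continuous on $Y$ (which carries the sup-in-$t$, $L^2$-in-$z$ metric), the evaluation $e_t\colon X_{B_1}\to[0,1]$ is merely Borel, so the narrow convergence $\lambda^n\to\lambda$ secured by tightness does not push forward directly under the map $e_t'\times id\times id\times e_t$.

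To circumvent this, I would approximate $e_t$ by the continuous family
\[
E_t^{\eps}(\gamma):=\frac{1}{\eps}\int_{t-\eps}^t\gamma(s)\,ds,
\]
extending $\gamma$ by its right limit at $0$ for $s\le 0$. Cauchy--Schwarz gives $|E_t^{\eps}(\gamma_1)-E_t^{\eps}(\gamma_2)|\le\eps^{-1/2}d(\gamma_1,\gamma_2)$, so $E_t^{\eps}$ is continuous on $(X_{B_1},d)$; and by left-continuity of $\gamma\in X_{B_1}$, one has $E_t^{\eps}(\gamma)\to\gamma(t)=e_t(\gamma)$ pointwise as $\eps\to 0$. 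I introduce the auxiliary measures $\eta_t^{n,\eps}:=(e_t'\times id\times id\times E_t^{\eps})\#\lambda^n$ and $\eta_t^{\eps}:=(e_t'\times id\times id\times E_t^{\eps})\#\lambda$. For each fixed $\eps$, the map inside the pushforward is continuous, so $\eta_t^{n,\eps}\to\eta_t^{\eps}$ narrowly as $n\to\infty$ is a direct consequence of $\lambda^n\to\lambda$.

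I would then close a triangle-inequality argument with bounded Lipschitz test functions (which determine narrow convergence on a metric space). For such $f$ with Lipschitz constant $L$, Fubini and Theorem~\ref{3.1t}(vi) give
\[
\bigg|\int f\,d\eta_t^n-\int f\,d\eta_t^{n,\eps}\bigg|\le\frac{L}{\eps}\int_{t-\eps}^t\|F^n(t,\cdot)-F^n(s,\cdot)\|_{L^1}\,ds\le L\,C_6\sqrt{\eps+\delta t_n},
\]
which tends to $LC_6\sqrt{\eps}$ as $n\to\infty$ since $\delta t_n=(nC_4')^{-1}\to 0$. For the companion error $\eta_t^{\eps}-\eta_t$, dominated convergence (the integrand is bounded by $1$ and tends to $0$ pointwise by left-continuity) yields $\int|e_t-E_t^{\eps}|\,d(\pi_4\#\lambda)\to 0$ as $\eps\to 0$.

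Combining these three ingredients by choosing $\eps$ small first and then $n$ large concludes the argument. The main subtlety is precisely the uniform-in-$n$ control of $|e_t-E_t^{\eps}|$ in $L^1(\pi_4\#\lambda^n)$; this is exactly what the $L^1$-modulus-of-continuity estimate $\sqrt{t-s+\delta t}$ from Theorem~\ref{3.1t}(vi) was engineered to supply, and without it the discretization parameter $\delta t_n$ could not be absorbed into $\eps$.
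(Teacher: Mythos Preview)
Your argument is correct and essentially identical to the paper's own proof: both replace the discontinuous evaluation $e_t$ by the left-averaging operator $\gamma\mapsto\eps^{-1}\int_{t-\eps}^t\gamma$, use its continuity to pass the narrow limit in $n$, control the $n$-side error uniformly via the $\sqrt{t-s+\delta t}$ estimate of Theorem~\ref{3.1t}(vi), and handle the $\lambda$-side error by dominated convergence and left-continuity. The only differences are cosmetic (you name the intermediate pushforwards $\eta_t^{n,\eps}$, $\eta_t^\eps$ explicitly, whereas the paper writes out the integrals directly).
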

\begin{proof}
First we remind the reader that this is not completely obvious as the evaluation map $e_t$ in the last component is not continuous.
Instead, we will use the continuity estimates in point (vi) of Theorem \ref{3.1t}.

For $\gamma\in X_{B_1}$, extend $\gamma(t)=\gamma(0+)$ for $t\leq0$, and define the operator: $A^{\prime}_{\eps}:X_{B_1}\rightarrow X_{B_1}$, given by $\gamma\longmapsto\eps^{-1}\int_{t-\eps}^t\gamma(s)ds$. Then it is straightforward to check $A'_{\eps}$ is a  continuous map for each fixed $\eps$,
 and $A_{\eps}(\gamma)\rightarrow\gamma$ for each fixed $\gamma$ as $\eps\rightarrow0$.

Let $f\in C_b(Y_{B_3}\times\mathbb{R}\times[0,1]\times[0,1])$ and is 1-Lipschitz, then we can compute
\begin{equation}\label{5.7}
\begin{split}
&\int f(\theta,s,z,z^{\prime})
d\eta_t^n(\theta,s,z,z^{\prime})=\int f(\tilde{\theta}_t,s,z,\gamma_t)d\lambda^n(\tilde{\theta},s,z,\gamma)\\
&=\int(f(\tilde{\theta}_t,s,z,\gamma_t)-f(\tilde{\theta}_t,s,z,(A'_{\eps}(\gamma))_t)d\lambda^n(\tilde{\theta},s,z,\gamma)+\int f(\tilde{\theta}_t,s,z,(A_{\eps}(\gamma))_t)d\lambda^n(\tilde{\theta},s,z,\gamma).
\end{split}
\end{equation}
Now observe that for each fixed $\eps>0$, the map $(\tilde{\theta},s,z,\gamma)\longmapsto(\tilde{\theta}_t,s,z,(A'_{\eps}(\gamma))_t)$ is continuous. Hence for each fixed $\eps>0$, the following convergence holds as $n\rightarrow\infty$.
\begin{equation}\label{5.8}
\int f(\tilde{\theta}_t,s,z,(A_{\eps}(\gamma))_t)d\lambda^n(\tilde{\theta},s,z,\gamma)\rightarrow\int f(\tilde{\theta}_t,s,z,(A_{\eps}(\gamma))_t)d\lambda(\tilde{\theta},s,z,\gamma).
\end{equation}
To estimate the first term of (\ref{5.7}), observe
\begin{equation}\label{5.9}
\begin{split}
&|\int(f(\tilde{\theta}_t,s,z,\gamma_t)-f(\tilde{\theta}_t,s,z,(A_{\eps}(\gamma))_t)d\lambda^n(\tilde{\theta},s,z,\gamma)|\leq\int
|\gamma_t-(A_{\eps}(\gamma))_t|d\lambda^n(\tilde{\theta},s,z,\gamma)\\
&\leq\int d\mu_0(\theta)\sum_{\sigma\in S_{\theta}}n^n\mu_{1\sigma(1)}^{\theta}
\cdots\mu_{n\sigma(n)}^{\theta}\int_{[0,1]}|F^n_{\sigma,t}(z)-(A_{\eps}F^n_{\sigma,\cdot}(z))(t)|dz\\
&\leq\int d\mu_0(\theta)\sum_{\sigma\in S_{\theta}}n^n\mu_{1\sigma(1)}^{\theta}
\cdots\mu_{n\sigma(n)}^{\theta}\cdot\eps^{-1}
\int_{t-\eps}^t\int_{[0,1]}|F^n_{\sigma,t}(z)-F_{\sigma,s}^n(z)|dzds\leq C_6\sqrt{\eps+\delta t}.
\end{split}
\end{equation}
In the last inequality, we used point (vi) of Theorem \ref{3.1t}.

Combining (\ref{5.7})-(\ref{5.9}), it follows that 
$$
\lim\sup_{n\rightarrow\infty}|\int f(\theta,s,z,z^{\prime})d(\eta_t^n-\eta_t)|\leq C_6\sqrt{\eps}+\int|\gamma_t-(A_{\eps}(\gamma))_t|d\lambda(\tilde{\theta},s,z,\gamma).
$$
Now using the bounded convergence theorem, we can conclude the integral on the right hand side tends to zero as $\eps\rightarrow0$ since $(A_{\eps}(\gamma))_t\rightarrow\gamma_t$ as $\eps\rightarrow0$ and $\gamma$ is left continuous.
\end{proof}
\subsection{The limit is a solution}
In this section, we will show the limit $\lambda$ obtained in previous subsection is a measure valued solution. Some preparations are needed before we proceed. 

For any $B>0$, we may define the "averaging" operator: $A_{\eps}:Y_B\rightarrow Y_B$, given by $A_{\eps}(\theta)(z)=\eps^{-1}\int_z^{z+\eps}\theta(w)dw$.
Here we extended the definition of $\theta$ so that $\theta(z)=B$ for $z\geq1$.
It is clear that for any $\theta_1,\,\theta_2\in Y_B$, one has $||A_{\eps}(\theta_1)-A_{\eps}(\theta_2)||_{L^2}\leq||\theta_1-\theta_2||_{L^2}$. 
Hence $A_{\eps}$ is a continuous map for each fixed $\eps>0$. Also it is clear that for any $\theta\in Y_B$, we have $A_{\eps}\theta\rightarrow \theta$ in $Y_B$ as $\eps\rightarrow 0$.
We can define a map $A_{\eps}:Y\rightarrow Y$ by the same formula, namely $A_{\eps}(\tilde{\theta})(t,z)=\eps^{-1}\int_z^{z+\eps}\tilde{\theta}(w,t)dw$. Also one can check $A_{\eps}:Y\rightarrow Y$ is a continuous map for each $\eps\rightarrow0$, and $A_{\eps}(\tilde{\theta})\rightarrow\tilde{\theta}$ in $Y$ as $\eps\rightarrow0$.

 Now choose $B=B_3$, we prove the following estimate about $A_{\eps}$.
\begin{lem}\label{6.4ln}
Let $\eps>0$. For any $n\geq1$, and any $t\in(0,T)$, it holds
$$
\int_{Y\times X_{B_1}}|\tilde{\theta}_t(\gamma_t)-(A_{\eps}\tilde{\theta})_t(\gamma_t)|d\pi_{14}\#\lambda^n(\theta,\gamma)\leq B_3\eps.
$$
\end{lem}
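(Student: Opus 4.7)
The plan is to unpack $\pi_{14}\#\lambda^n$ via the explicit expression (\ref{4.3}), then use the measure-preserving property of $F^n_\sigma(t,\cdot)$ to discard the composition with $\gamma_t$, and finally reduce everything to the elementary fact that $\eps$-averaging moves a monotone bounded function by at most $B_3\eps$ in $L^1$.

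Concretely, from (\ref{4.3}) one has
\begin{equation*}
\pi_{14}\#\lambda^n=\int_{Y_{B_2}}d\mu_0(\theta)\sum_{\sigma\in S_\theta}n^n\mu^\theta_{1\sigma(1)}\cdots\mu^\theta_{n\sigma(n)}\bigl(\delta_{\bar\theta^n_\sigma}\times\Phi^n_\sigma\#\mathcal L^1_{[0,1]}\bigr),
\end{equation*}
so the integral in question equals a weighted average of
\begin{equation*}
I_{\theta,\sigma}:=\int_0^1\bigl|\bar\theta^n_\sigma(t,F^n_\sigma(t,z))-(A_\eps\bar\theta^n_\sigma)(t,F^n_\sigma(t,z))\bigr|\,dz.
\end{equation*}
Since $F^n_\sigma(t,\cdot)\#\mathcal L^1_{[0,1]}=\mathcal L^1_{[0,1]}$ (Section 3), a change of variables gives
\begin{equation*}
I_{\theta,\sigma}=\int_0^1|f(z)-A_\eps f(z)|\,dz,\qquad f(z):=\bar\theta^n_\sigma(t,z).
\end{equation*}
By Corollary \ref{2.2c}(i), for each integer $k$ the map $z\mapsto\theta^n_\sigma(k\delta t,z)$ is monotone increasing, so its convex-in-time interpolant $f$ is monotone increasing in $z$ too; by Theorem \ref{3.1t}(i) combined with the choice of $B_3$ made before Definition \ref{1.6d}, $|f|\leq B_3$. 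Extending $f$ by $B_3$ on $[1,\infty)$ (as required in the definition of $A_\eps$) preserves monotonicity, so $A_\eps f\geq f$ pointwise and the absolute value drops.

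The remaining step is a short Fubini calculation:
\begin{equation*}
\int_0^1(A_\eps f-f)\,dz=\eps^{-1}\int_0^\eps\!\!\int_0^1(f(z+s)-f(z))\,dz\,ds=\eps^{-1}\int_0^\eps\!\biggl[\int_1^{1+s}\!\!f\,dw-\int_0^s\!\!f\,dw\biggr]ds.
\end{equation*}
The first inner integral equals $B_3s$ by the extension; the second is at least $-B_3s$ because $|f|\leq B_3$, so the bracket is bounded by $2B_3s$. Integrating in $s$ gives $\eps^{-1}\cdot B_3\eps^2=B_3\eps$. Since this bound $I_{\theta,\sigma}\leq B_3\eps$ is uniform in $\theta$ and $\sigma$, and the total mass $\int d\mu_0(\theta)\sum_\sigma n^n\mu^\theta_{1\sigma(1)}\cdots\mu^\theta_{n\sigma(n)}$ equals $1$ (the inner sum is $n^n\cdot n^{-n}=1$), the claim follows.

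No real obstacle arises here: the argument is essentially a one-line estimate for monotone bounded functions, and the only thing worth checking carefully is that the piecewise linear interpolation in $t$ defining $\bar\theta^n_\sigma$ retains monotonicity in $z$ (it does, being a convex combination of two monotone increasing functions) and the correct $L^\infty$ bound (inherited directly from $\theta^n_\sigma$).
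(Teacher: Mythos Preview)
Your proof is correct and follows essentially the same approach as the paper's own proof: unpack $\pi_{14}\#\lambda^n$ via the explicit formula, use the measure-preserving property of $F^n_{\sigma,t}$ to reduce to $\int_0^1|\bar\theta^n_{\sigma,t}-(A_\eps\bar\theta^n_{\sigma})_t|\,dz$, then apply the Fubini/telescoping computation using monotonicity in $z$ and the $L^\infty$ bound $B_3$. Your explicit justification that the time-interpolant $\bar\theta^n_\sigma(t,\cdot)$ inherits monotonicity in $z$ (as a convex combination of two monotone functions) is a small clarifying addition that the paper leaves implicit.
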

\begin{proof}
From the definition of the measure $\lambda^n$, we can calculate:
\begin{equation*}
\begin{split}
&\int_{Y\times X_{B_1}}|\tilde{\theta}_t(\gamma_t)
-(A_{\eps}\tilde{\theta})_t(\gamma_t)|d\pi_{14}\#\lambda^n(\tilde{\theta},\gamma)dt\\
&=\int_{Y_{B_2}}d\mu_0(\theta)\sum_{\sigma\in S_{\theta}}n^n\mu_{1\sigma(1)}
\cdots\mu_{n\sigma(n)}^{\theta}\int_{[0,1]}|\bar{\theta}^n_{\sigma,t}(F^n_{\sigma,t}(z))-(A_{\eps}
\bar{\theta}^n_{\sigma})_t(F^n_{\sigma,t}(z))|dz\\
&=\int_{Y_{B_2}}d\mu_0(\theta)\sum_{\sigma\in S_{\theta}}n^n\mu_{1\sigma(1)}
\cdots\mu_{n\sigma(n)}^{\theta}\int_{[0,1]}|\bar{\theta}^n_{\sigma,t}(z)-(A_{\eps}
\bar{\theta}^n_{\sigma})_t(z)|dz.
\end{split}
\end{equation*}
In the last equality, we used the measure preserving property of $F_{\sigma,t}^n$.
For each fixed $\theta$, $\sigma$, we can estimate
\begin{equation*}
\begin{split}
&\int_{[0,1]}|\bar{\theta}^n_{\sigma,t}(z)-(A_{\eps}
\bar{\theta}^n_{\sigma})_t(z)|dz=\eps^{-1}\int_0^{\eps}ds\int_{[0,1]}\bar{\theta}^n_{\sigma,t}(z+s)-\bar{\theta}^n_{\sigma,t}(z)dz\\
&=\eps^{-1}\int_0^{\eps}ds
\big[\int_1^{1+s}
\bar{\theta}^n_{\sigma,t}(z)dz-\int_0^s\bar{\theta}^n_{\sigma,t}(z)dz\big]ds\leq
\eps^{-1}\int_0^{\eps}2B_3sds\leq B_3\eps.
\end{split}
\end{equation*}
In the first equality above, we used the monotonicity of $\bar{\theta}^n_{\sigma}$ in $z$. 
\end{proof}
Next we will check the properties listed in Definition \ref{1.6d} one by one.

\begin{lem}
Following the notation of Definition \ref{1.6d}, we have $t\longmapsto\eta_t\in\mathcal{P}(Y_{B_3}\times\mathbb{R}\times[0,1]\times[
0,1])$ is narrowly contiuous, and $\zeta_t\rightarrow\zeta_0$, $\pi_{34}\#\eta_t\in\Gamma(\mathcal{L}^1_{[0,1]},\mathcal{L}^1_{[0,1]})\rightarrow(id\times id)\#\mathcal{L}^1_{[0,1]}$ as $t\rightarrow0$.
\end{lem}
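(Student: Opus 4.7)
Plan. The strategy is to push the narrow convergence $\eta_t^n \to \eta_t$ from Lemma \ref{5.3l} through uniform-in-$n$ modulus-of-continuity estimates coming from Theorem \ref{3.1t}(v)--(vi). Since the ambient space $Y_{B_3} \times \mathbb{R} \times [0,1] \times [0,1]$ (with the $L^2$ metric on $Y_{B_3}$) is Polish, narrow convergence is determined by testing against bounded $1$-Lipschitz functions, and I shall fix such an $f$ throughout. The product metric I use is $d((\theta_1,s_1,z_1,z_1'),(\theta_2,s_2,z_2,z_2')) = \|\theta_1-\theta_2\|_{L^2} + |s_1-s_2| + |z_1-z_2| + |z_1'-z_2'|$.

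Narrow continuity of $t \mapsto \eta_t$. Using the explicit expression (\ref{6.10new}), for $s,t \in (0,T)$ the difference $|\int f\,d\eta_t^n - \int f\,d\eta_s^n|$ is dominated by the average (against $\mu_0$ and the combinatorial weights $n^n \mu^\theta_{1\sigma(1)}\cdots\mu^\theta_{n\sigma(n)}$) of $\|\bar{\theta}^n_\sigma(t) - \bar{\theta}^n_\sigma(s)\|_{L^2(0,1)} + \int_0^1 |F^n_{\sigma,t}(z) - F^n_{\sigma,s}(z)|\,dz$. The uniform $L^\infty$ bound $B_3$ combined with Theorem \ref{3.1t}(v) controls the first quantity by $\sqrt{2 B_3 C_5}(|t-s|+\delta t)^{1/4}$, and (vi) controls the second by $C_6\sqrt{|t-s|+\delta t}$. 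Both bounds are independent of $\theta,\sigma,n$, yielding $|\int f\,d\eta_t^n - \int f\,d\eta_s^n| \leq C(|t-s|+\delta t)^{1/4}$. Sending $n \to \infty$ via Lemma \ref{5.3l} gives $|\int f\,d\eta_t - \int f\,d\eta_s| \leq C|t-s|^{1/4}$, proving narrow continuity on $(0,T)$.

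Initial conditions and flow marginal. For $\zeta_t \to \zeta_0$, I apply the triangle inequality through $\zeta_t^n$ and $\zeta_0^n$: the outer differences vanish as $n\to\infty$ by Lemma \ref{5.3l} and by the narrow convergence $\zeta_0^n \to \zeta_0$ established in the subsection on discretizing the initial data, respectively. Here one checks the identification $\pi_{124}\#\eta_0^n = \zeta_0^n$ directly from $\bar{\theta}^n_\sigma(0) = D^n(\theta)$ (independent of $\sigma$), $F^n_{\sigma,0} = id$, and the representation (\ref{5.4}) of $\alpha_\theta^n$. The middle term is bounded uniformly by $C(t+\delta t)^{1/4}$ via the same computation as above, valid at $s=0$ since Theorem \ref{3.1t}(v),(vi) allow $0 \le s < t$. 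For the flow-map marginal, each summand in (\ref{6.10new}) contributes $(id \times F^n_{\sigma,t})\#\mathcal{L}^1_{[0,1]} \in \Gamma(\mathcal{L}^1_{[0,1]},\mathcal{L}^1_{[0,1]})$ because $F^n_{\sigma,t}$ preserves $\mathcal{L}^1_{[0,1]}$; convexity of $\Gamma$ and its closedness under narrow convergence (marginal projections being narrowly continuous) then give $\pi_{34}\#\eta_t \in \Gamma(\mathcal{L}^1_{[0,1]},\mathcal{L}^1_{[0,1]})$. Finally, for bounded $1$-Lipschitz $g$ on $[0,1]^2$, the difference $|\int g\,d(\pi_{34}\#\eta_t^n) - \int g\,d(id\times id)\#\mathcal{L}^1_{[0,1]}|$ is bounded by the average of $\int_0^1 |F^n_{\sigma,t}(z) - z|\,dz \leq C_6\sqrt{t+\delta t}$, using $F^n_{\sigma,0}=id$ and Theorem \ref{3.1t}(vi); sending $n \to \infty$ and then $t \to 0$ concludes.

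The main obstacle is obtaining the modulus-of-continuity estimates uniformly in the discretization parameter $n$, which is subtle because $n$ is coupled to the time step through $\delta t = 1/(nC_4')$; this uniformity is precisely the content of Theorem \ref{3.1t}(v),(vi), so the present lemma reduces to routine measure-theoretic bookkeeping via the triangle-inequality template sketched above.
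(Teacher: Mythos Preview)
Your proposal is correct and follows essentially the same approach as the paper's own proof: test against bounded $1$-Lipschitz functions, use the explicit representation (\ref{6.10new}) of $\eta_t^n$, bound the time differences uniformly in $n$ via Theorem \ref{3.1t}(v)--(vi), and pass to the limit through Lemma \ref{5.3l}. In fact you are slightly more careful than the paper in one place: the paper writes the $L^2$ modulus for $\bar\theta^n_\sigma$ as $(C_5B_3+C_6)\sqrt{t'-t+\delta t}$, whereas Theorem \ref{3.1t}(v) only gives an $L^1$ bound, so converting via the uniform $L^\infty$ bound $B_3$ yields the exponent $1/4$ that you record; this makes no difference to the argument since any modulus vanishing at $0$ suffices.
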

\begin{proof}
First we check the continuity. Let $f\in C_b(Y_{B_3}\times\mathbb{R}\times[0,1]\times
[0,1])$ be 1-Lipschitz. We compute
\begin{equation*}
\begin{split}
\int f(\theta,s,z,z^{\prime})d\eta_t^n(\theta,
z,s,z^{\prime})=\int_{Y_{B_2}}d\mu_0(\theta)
\bigg[\sum_{\sigma\in S_{\theta}}n^n\mu^{\theta}_{1\sigma(1)}\cdots
\mu^{\theta}_{n\sigma(n)}
\int f(\bar{\theta}_{\sigma}^n(t),z,\theta_{0,\sigma}^{M,n}(z),F_{\sigma,t}^n(z))dz\bigg].
\end{split}
\end{equation*}
Now choose $t,t^{\prime}$, with $t^{\prime}>t$, one has for each fixed $\theta\in Y_{B_2}$, and each $\sigma\in S_{\theta}$,
\begin{equation*}
\begin{split}
&\int |f(\bar{\theta}_{\sigma}^n(t),\theta_{0,\sigma}^{M,n}(z),z,F_{\sigma,t}^n(z))-f(\bar{\theta}_{\sigma}^n(t^{\prime}),\theta_{0,\sigma}^{M,n}(z),z,F^n_{\sigma,t^{\prime}}(z)|dz\\
&\leq ||\bar{\theta}_{\sigma}^n(t)-\bar{\theta}_{\sigma}^n
(t^{\prime})||_{L^2([0,1])}+\int|F_{\sigma,t}^n(z)-F_{\sigma,t^{\prime}}^n(z)|dz\leq (C_5B_3+C_6)\sqrt{t^{\prime}-t+\delta t}.
\end{split}
\end{equation*}
Here the constants $C_5$ and $C_6$ are given by the points (v) and (vi) of Theorem \ref{3.1t}. Now since $\sum_{\sigma\in S}n^n\mu_{1\sigma(1)}\cdots\mu_{n\sigma(n)}
=1$, we obtain for any $t<t^{\prime}$:
\begin{equation}\label{4.4}
|\int f(\theta,s,z,z^{\prime})d(\eta_t^n-
\eta_{t^{\prime}}^n)(\theta,
s,z,z^{\prime})|\leq (C_5B_3+C_6)\sqrt{t^{\prime}-t+\delta t}.
\end{equation}
The continuity now follows from sending $n\rightarrow\infty$ in (\ref{4.4}) and use Lemma \ref{5.3l}.
To show that $\zeta_t\rightarrow\zeta_0$, we show that for each $f\in C_b(Y_{B_3}\times\mathbb{R}\times[0,1])$, and 1-Lipschitz, one has $\sup_n|\int fd\zeta_t^n-\int f\zeta_0^n|\leq C\sqrt{t}$, with $C$ universal. Indeed we calculate 
\begin{equation*}
\begin{split}
\int f(\theta,s,z^{\prime}
)d\zeta_t^n(\theta,s,z^{\prime})=\int d\mu_0
(\theta)\sum_{\sigma\in S_{\theta}}n^n\mu_{1\sigma(1)}^{\theta}\cdots\mu_{n\sigma(n)}^{\theta}\int f(\bar{\theta}^n_{\sigma}(t),\theta^{M,n}_{0,\sigma}(z),F_{\sigma,t}^n(z))dz.
\end{split}
\end{equation*}
On the other hand, from (\ref{5.4}), we know that
$$
\int f(\theta,s,z^{\prime})
d\zeta_0^n(\theta,s,z^{\prime})=\int d\mu_0(\theta)\sum_{\sigma\in S_{\theta}}n^n\mu_{1\sigma(1)}^{\theta}
\cdots\mu_{n\sigma(n)}^{\theta}\int f(\theta^n_0,\theta^{M,n}_{0,\sigma}(z),z)dz.
$$
Using point (v) and (vi) in Theorem \ref{3.1t} once more, we see that for any choice of $\theta\in Y_{B_2}$ , $s\in\mathbb{R}$, and $\sigma\in S_{\theta}$, we have
\begin{equation*}
\begin{split}
\int_{[0,1]}|f(\theta^n_0,s,z)-f(\bar{\theta}^n_{\sigma}(t),s,F^n_{\sigma,t}(z)|dz&\leq||\theta_0^n-\bar{\theta}^n_{\sigma,t}||_{L^2}+\int_{[0,1]}|z-F^n_{\sigma,t}(z)dz|\\
&\leq(C_5B_3+C_6)\sqrt{t+\delta t}.
\end{split}
\end{equation*}
Then we can proceed in a similar way as before. That $\pi_{34}\#\eta_t^n\in\Gamma(\mathcal{L}^1_{[0,1]},\mathcal{L}^1_{[0,1]})$ follows readily from (\ref{6.10new}). The convergence for $\pi_{34}\#\eta^n_t$ is similar.
\end{proof}

Next we check the point (ii) of Definition \ref{1.6d}.
\begin{lem}\label{5.5l}
For any $t\in (0,T)$, $\pi_{13}\#\zeta_t=\mu_t\times\mathcal{L}^1_{[0,1]}$, $\pi_2\#\zeta_t$ has compact support. Besides, $\theta(z)\geq\Theta(s,z,t)$ for $\zeta_t-a.e$ $(\theta,s,z)$.
\end{lem}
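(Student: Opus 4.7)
The plan is to establish the three assertions separately, exploiting the narrow convergence $\zeta_t^n\rightharpoonup\zeta_t$ from Lemma \ref{5.3l} together with the explicit form of $\eta_t^n$ in (\ref{6.10new}). Under $\zeta_t^n$, a sample $(\theta,s,z)$ arises as $(\bar\theta^n_\sigma(t),\theta^{M,n}_{0,\sigma}(z_0),F^n_{\sigma,t}(z_0))$ with $(\theta,\sigma)$ drawn from the convex combination and $z_0$ uniform on $[0,1]$.

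For $\pi_{13}\#\zeta_t=\mu_t\times\mathcal{L}^1_{[0,1]}$: conditionally on $(\theta,\sigma)$, the first component is a fixed element of $Y_{B_3}$ and the third is $\mathcal{L}^1_{[0,1]}$-distributed (since $F^n_{\sigma,t}$ is measure preserving), so the conditional law is a product. Averaging, $\pi_{13}\#\zeta_t^n=\mu_t^n\times\mathcal{L}^1_{[0,1]}$ with $\mu_t^n=\pi_1\#\zeta_t^n$, and the narrow continuity of projection passes the product form to the limit. Compactness of $\supp\pi_2\#\zeta_t$ follows likewise: each $\theta^{M,n}_{0,\sigma}(z_0)$ lies in $\{\alpha_{ij}\}\subset[-M,M]$, so $\supp\pi_2\#\zeta_t^n\subset[-M,M]$ uniformly in $n$, and this closed bounded set contains the support of any narrow limit.

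The remaining constraint $\theta(z)\geq\Theta(s,z,t)$ $\zeta_t$-a.e.\ is the main content. The key discrete estimate is
\begin{equation*}
\bar\theta^n_\sigma(t,y)\ \geq\ \Theta\big(\theta^{M,n}_\sigma(t,y),y,t\big)-C\,\delta t,\qquad y\in[0,1),\ t\in[0,T).
\end{equation*}
Fix $t\in[k\delta t,(k+1)\delta t)$. I first show the Eulerian temperature $\theta^n_\sigma(\cdot,y)$ is non-decreasing in $t$: across one sub-rearrangement $\sigma_{k+1}$ moving some parcel $j_*$ up to position $k+1$, each position $j\in[j_*,k]$ receives the old value at position $j+1$, which dominates its previous value by Lemma \ref{2.1l}(i); position $k+1$ receives $\Theta(\theta^{M,n}_{\beta_k^{-1}(k+1)},z_{k+1},\delta t)$, dominating the previous $\theta^{n,k+1}_{k+1}$ by the first inequality of (\ref{3.5n}); elsewhere nothing changes. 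Consequently $\bar\theta^n_\sigma(t,y)\geq\theta^n_\sigma(k\delta t,y)$; Corollary \ref{2.2c}(iv) bounds the latter below by $\Theta(\theta^{M,n}_\sigma(k\delta t,y),y,k\delta t)$; and smoothness of $\Theta$ together with the identity $\theta^{M,n}_\sigma(t,y)=\theta^{M,n}_\sigma(k\delta t,y)$ (valid because $F^n$ is constant on $[k\delta t,(k+1)\delta t)$) produces the displayed $O(\delta t)$ error. Transporting pointwise through the measure-preserving $F^n_{\sigma,t}$ and using $\theta^{M,n}_{0,\sigma}(z_0)=\theta^{M,n}_\sigma(t,F^n_{\sigma,t}(z_0))$ yields
\begin{equation*}
\int\big(\theta(z)-\Theta(s,z,t)\big)^-\,d\zeta_t^n(\theta,s,z)\ \leq\ C\,\delta t.
\end{equation*}

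The main obstacle is passing this to the limit, since the evaluation $(\theta,z)\mapsto\theta(z)$ is only Borel, not continuous. I handle this by smoothing: the function $f_\eps(\theta,s,z):=((A_\eps\theta)(z)-\Theta(s,z,t))^-$ is bounded and continuous by Lemma \ref{4.3a}, and by the 1-Lipschitz property of the negative part together with Lemma \ref{6.4ln} (which transports to $\int|\theta(z)-(A_\eps\theta)(z)|\,d\zeta_t^n\leq B_3\eps$), I obtain $\int f_\eps\,d\zeta_t^n\leq C\delta t+B_3\eps$. Narrow convergence combined with $\delta t\to 0$ yields $\int f_\eps\,d\zeta_t\leq B_3\eps$. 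Finally, by the product structure established in the first part, $\zeta_t$-a.e.\ point $(\theta,s,z)$ has $z$ a continuity point of the monotone function $\theta$ (each such $\theta$ has only countably many jumps), so $(A_\eps\theta)(z)\to\theta(z)$ as $\eps\to 0$; dominated convergence then forces $\int(\theta(z)-\Theta(s,z,t))^-\,d\zeta_t=0$, which is the required inequality.
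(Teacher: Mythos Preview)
Your proof is correct and, for the constraint $\theta(z)\geq\Theta(s,z,t)$, is in fact more careful than the paper's.  The product structure and compact support arguments are essentially identical to the paper's.

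For the constraint, you differ from the paper in two respects.  First, the paper bounds $\int(\theta(z)-\Theta(s,z,t))^- d\zeta_t^n$ by replacing $\bar\theta^n_\sigma$ with $\theta^n_\sigma$ at the cost of $\|\bar\theta^n_\sigma-\theta^n_\sigma\|_{L^1}\leq C_5\sqrt{\delta t}$ (Theorem \ref{3.1t}(v)).  You instead observe that the Eulerian field $y\mapsto\theta^n_\sigma(\cdot,y)$ is nondecreasing in time (a direct consequence of Lemma \ref{2.1l}(i) and (\ref{3.5n}) at each sub-rearrangement), which gives the pointwise inequality $\bar\theta^n_\sigma(t,y)\geq\theta^n_\sigma(k\delta t,y)$ and hence the sharper $O(\delta t)$ bound.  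Second, and more substantively, the paper concludes by writing ``passing to the limit as $n\to\infty$'' without addressing the fact that $(\theta,z)\mapsto(\theta(z)-\Theta(s,z,t))^-$ is only Borel.  You handle this correctly by smoothing with $A_\eps$, invoking Lemma \ref{6.4ln} to control the error uniformly in $n$, and then sending $\eps\to 0$ using right-continuity of $\theta\in Y_{B_3}$; this is the same device the paper deploys in Lemmas \ref{5.6l}--\ref{5.8l} but omits here.

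One minor point: your citation of Lemma \ref{4.3a} for the \emph{continuity} of $f_\eps$ is off---that lemma establishes Borel measurability.  The continuity of $(\theta,z)\mapsto(A_\eps\theta)(z)$ follows instead from $|A_\eps\theta_1(z)-A_\eps\theta_2(z)|\leq\eps^{-1/2}\|\theta_1-\theta_2\|_{L^2}$ together with the continuity of $z\mapsto A_\eps\theta(z)$, exactly as in the proof of Lemma \ref{5.6l}.  Also, your final dominated-convergence step does not actually require the continuity-point argument: since every $\theta\in Y_{B_3}$ is right-continuous on $[0,1)$ and $\pi_3\#\zeta_t=\mathcal L^1_{[0,1]}$, one has $A_\eps\theta(z)\to\theta(z)$ for $\zeta_t$-a.e.\ $(\theta,s,z)$ directly.
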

\begin{proof}
From (\ref{6.10new}) we conclude:
\begin{equation}\label{5.14}
\zeta_t^n=\int_{Y_{B_2}}d\mu_0(\theta)
\sum_{\sigma\in S_{\theta}}n^n\mu_{1\sigma(1)}^{\theta}
\cdots\mu_{n\sigma(n)}^{\theta}
\delta_{\bar{\theta}^n_{\sigma}(t)}\times(\theta^{M,n}_{0,\sigma}\times F^n_{\sigma,t})\#\mathcal{L}^1_{[0,1]}.
\end{equation}
Hence we conclude
\begin{equation}\label{6.16new}
\pi_{13}\#\zeta_t^n=\int_{Y_{B_2}}d\mu_0(\theta)
\sum_{\sigma\in S_{\theta}}n^n\mu_{1\sigma(1)}^{\theta}
\cdots\mu_{n\sigma(n)}^{\theta}
\delta_{\bar{\theta}^n_{\sigma}(t)}\times \big(F_{\sigma,t}^n\#\mathcal{L}^1_{[0,1]}\big)
=\mu_t^n\times\mathcal{L}^1_{[0,1]}.
\end{equation}
In the above, we used the measure preserving property of the map $F_{\sigma,t}^n$ and here $\mu^n_t=\int_{Y_{B_2}}d\mu_0(\theta)
\sum_{\sigma\in S_{\theta}}\mu_{1\sigma(1)}^{\theta}
\cdots\mu_{n\sigma(n)}^{\theta}\delta_{\bar{\theta}^n_{\sigma}(t)}\in\mathcal{P}(Y_{C_1})$. 
Lemma \ref{5.3l} implies $\zeta_t^n\rightarrow\zeta_t$. On the other hand, we may assume $\mu_t^n\rightarrow\mu_t$. Passing to limit in (\ref{6.16new}), we see $\pi_{13}\#\zeta_t=\mu_t
\times\mathcal{L}^1_{[0,1]}$.

Now from (\ref{5.14}), one also calculate
$$
\pi_2\#\zeta_t^n=\int_{Y_{B_2}}d\mu_0(\theta)
\sum_{\sigma\in S_{\theta}}n^n\mu_{1\sigma(1)}^{\theta}
\cdots\mu_{n\sigma(n)}^{\theta}
\theta_{0,\sigma}^{M,n}
\#\mathcal{L}^1_{[0,1]}.
$$
From the construction given in the last subsection, we have $||\theta^{M,n}_{0,\sigma}||_{L^{\infty}}\leq M$, 
hence supp $\pi_2\#\zeta_t^n\subset[-M,M]$. 
Passing to the limit, the same will hold for $\pi_2\#\zeta_t$.

It remains to check that $\theta(z)\geq\Theta(s,z,t)$ for $\zeta_t-a.e$ $(\theta,s,z)$.
It suffices to show $\int(\theta(z)-\Theta(s,z,t))^-d\zeta_t(\theta,s,z)=0$. Now we calculate
\begin{equation*}
\begin{split}
&\int_{Y_{B_3}\times\mathbb{R}\times[0,1]}(\theta(z)-\Theta(s,z,t))d\zeta_t^n(\theta,s,z)\\
&=\int_{Y_{B_2}}d\mu_0(\theta)\sum_{\sigma\in S_{\theta}}n^n\mu_{1\sigma(1)}^{\theta}
\cdots\mu_{n\sigma(n)}^{\theta}\int_{[0,1]}(\bar{\theta}^n_{\sigma}(t,F^n_{\sigma,t}(z))-\Theta(\theta^{M,n}_{\sigma}(z),F_{\sigma,t}^n(z),t)^-dz\\
&\leq\int_{Y_{B_2}}d\mu_0(\theta)\sum_{\sigma\in S_{\theta}}n^n\mu_{1\sigma(1)}^{\theta}
\cdots\mu_{n\sigma(n)}^{\theta}\int_{[0,1]}|\bar{\theta}^n_{\sigma}(t,F_{\sigma,t}^n(z))-\theta^n_{\sigma}(t,F_{\sigma,t}^n(z)|dz+\sup|\partial_t\Theta|\delta t\\
&\leq (C_5+\sup|\partial_t\Theta|)\sqrt{\delta t}
\end{split}
\end{equation*}
In the first inequality above, we used that $\theta_{\sigma}^n(t,F_{\sigma,t}^n(z))
\geq\Theta(\theta_{\sigma}^{M,n}(z),F_{\sigma,t}^n(z),k\delta t)$, where $t\leq k\delta t<(k+1)\delta t$. This follows from point (iv) of Corollary \ref{2.2c}.
 In the last inequality, we used the point (v) of Theorem \ref{3.1t} and the measure preserving property of the map $F_{\sigma,t}^n$. 
Passing to the limit as $n\rightarrow\infty$, the conclusion follows.
\end{proof}
Now we check the point (iii) of Definition \ref{1.6d}. 
\begin{lem}\label{5.6l}
Fix $\eps>0$, define the function $I_{\eps}:Y\times X_{B_1}\rightarrow\mathbb{R}$, given by 
$$
(\tilde{\theta},\gamma)\longmapsto\int_{(0,T)^2}((A_{\eps}\tilde{\theta})_{t_1}(\gamma_{t_1})-(A_{\eps}\tilde{\theta})_{t_2}(\gamma_{t_2}))^+\chi_{t_1<t_2}dt_1dt_2
$$
is continuous.
\end{lem}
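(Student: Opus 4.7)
The strategy is to reduce continuity of $I_\eps$ to pointwise a.e.\ convergence of the integrand and then apply dominated convergence, with the essential input being the $z$-regularization that $A_\eps$ builds in.

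First I would record two regularity facts about $A_\eps$. Because every $\tilde\theta(\cdot,t)$ takes values in $[-B_3,B_3]$, Cauchy--Schwarz applied to $(A_\eps\tilde\theta^n-A_\eps\tilde\theta)(t,z)=\eps^{-1}\int_z^{z+\eps}(\tilde\theta^n-\tilde\theta)(t,w)\,dw$ gives
\[
\sup_{(t,z)}\bigl|(A_\eps\tilde\theta^n)_t(z)-(A_\eps\tilde\theta)_t(z)\bigr|\le \eps^{-1/2}\sup_{t\in[0,T)}\|\tilde\theta^n(t)-\tilde\theta(t)\|_{L^2([0,1])},
\]
so $\tilde\theta^n\to\tilde\theta$ in $Y$ forces $A_\eps\tilde\theta^n\to A_\eps\tilde\theta$ uniformly on $[0,T)\times[0,1]$. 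Second, monotonicity of $\tilde\theta(\cdot,t)$ in $z$ together with the uniform bound $B_3$ makes each $z\mapsto (A_\eps\tilde\theta)_t(z)$ Lipschitz with constant at most $2B_3/\eps$, independently of $t$ and of the choice of $\tilde\theta$.

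Next I would take a sequence $(\tilde\theta^n,\gamma^n)\to(\tilde\theta,\gamma)$ in $Y\times X_{B_1}$. Since the metric on $X_{B_1}$ is $L^2(0,T)$, I can pass to a subsequence (still indexed by $n$) along which $\gamma^n_t\to\gamma_t$ for a.e.\ $t\in(0,T)$. The two facts above combine into
\[
\bigl|(A_\eps\tilde\theta^n)_t(\gamma^n_t)-(A_\eps\tilde\theta)_t(\gamma_t)\bigr|\le \|(A_\eps\tilde\theta^n)_t-(A_\eps\tilde\theta)_t\|_{L^\infty_z}+\tfrac{2B_3}{\eps}\,|\gamma^n_t-\gamma_t|\longrightarrow 0
\]
for a.e.\ $t$, so the integrand defining $I_\eps(\tilde\theta^n,\gamma^n)$ converges pointwise a.e.\ on $(0,T)^2$ to the corresponding expression for $(\tilde\theta,\gamma)$. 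Uniform boundedness of the integrand by $2B_3$ then lets dominated convergence conclude $I_\eps(\tilde\theta^n,\gamma^n)\to I_\eps(\tilde\theta,\gamma)$ along the subsequence. The usual every-subsequence-has-a-further-convergent-subsequence argument promotes this to convergence of the original sequence, giving sequential continuity; since $Y\times X_{B_1}$ is metric, this is continuity.

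The main obstacle is that the evaluation maps $\gamma\mapsto\gamma_t$ on $X_{B_1}$ are only Borel, not continuous, as the authors already emphasise. The averaging operator $A_\eps$ is precisely what circumvents this: its uniform Lipschitz constant $2B_3/\eps$ in the $z$-variable is what converts $L^2$-convergence (and hence pointwise a.e.\ convergence along a subsequence) of the paths $\gamma^n$ into genuine convergence of the composed values $(A_\eps\tilde\theta^n)_t(\gamma^n_t)$. Without this regularisation the statement would be false, which is the reason $A_\eps$ is inserted in the definition of $I_\eps$ in the first place.
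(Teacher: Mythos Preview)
Your proof is correct and follows essentially the same route as the paper: establish pointwise a.e.\ convergence of $(A_\eps\tilde\theta^n)_t(\gamma^n_t)$ via the uniform-in-$z$ regularity that $A_\eps$ provides, then apply dominated convergence. The only difference is that the paper invokes Helly's selection principle to assert that the \emph{full} sequence $\gamma^k_t\to\gamma_t$ off a countable set, whereas you pass to an a.e.-convergent subsequence of the $L^2$-convergent paths and close with the sub-subsequence argument---a slightly more elementary and self-contained variant of the same idea.
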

\begin{proof}
Let $(\tilde{\theta}^k,\gamma^k)\rightarrow(\tilde{\theta},\gamma)$ in $Y\times X_{B_1}$, we need to show $I(\tilde{\theta}^k,\gamma^k)\rightarrow I(\tilde{\theta},\gamma)$.
We prove this by showing for $\mathcal{L}^1-$a.e $t\in(0,T)$, we have pointwise convergence:$(A_{\eps}\tilde{\theta}^k)_t(\gamma^k)
\rightarrow(A_{\eps}\tilde{\theta})_t(\gamma_t)$, and then the desired convergence follows from dominated convergence theorem.

By Helly`s selection principle, we know that $\gamma^k_t\rightarrow\gamma_t$ except for a countable set of $t$. Since $|A_{\eps}(\tilde{\theta}^k)_t(z)-A_{\eps}(\tilde{\theta})_t(z)|\leq\eps^{-1/2}||\tilde{\theta}^k_t-\tilde{\theta}_t||_{L^2}$, 
we know that $A_{\eps}(\tilde{\theta}^k)_t\rightarrow A_{\eps}(\tilde{\theta})_t$ uniformly for each $t\in(0,T)$. Hence for any $t$ with $\gamma^k(t)\rightarrow\gamma(t)$, it holds $(A_{\eps}\tilde{\theta}^k)_t(\gamma^k_t)
\rightarrow(A_{\eps}\tilde{\theta})_t(\gamma_t)$.
It follows that $((A_{\eps}(\tilde{\theta}^k))_{t_1}(\gamma^k_{t_1})-(A_{\eps}\tilde{\theta}^k)_{t_2}(\gamma^k_{t_2}))^+\rightarrow((A_{\eps}(\tilde{\theta}))_{t_1}(\gamma_{t_1})-(A_{\eps}\tilde{\theta})_{t_2}(\gamma_{t_2}))^+$ 
for $\mathcal{L}^2-a.e$ $(t_1,t_2)$. Then the result follows from bounded convergence theorem.

\end{proof}

\begin{lem}\label{5.7l}
For any $n$, the following estimate holds:
$$
\int_{(0,T)^2}\int_{Y\times X_{B_1}}(\tilde{\theta}_{t_1}(\gamma_{t_1})-\tilde{\theta}_{t_2}(\gamma_{t_2}))^+\chi_{t_1<t_2}
d\mathcal{L}^2(t_1,t_2)
d\pi_{14}\#\lambda^n(\theta,\gamma)\leq 2C_5T^2\sqrt{\delta t}.
$$
Here $C_5$ is the universal constant given in point (v) of Theorem \ref{3.1t}.
\end{lem}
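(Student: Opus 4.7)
My plan is to reduce the estimate, via the explicit structure of $\lambda^n$, to a deterministic bound for each $(\theta,\sigma)$. The key observation is that if $\bar\theta^n_\sigma$ were replaced by the piecewise-constant-in-time function $\theta^n_\sigma$, the integrand would vanish, because $\theta^n_\sigma(t,F^n_\sigma(t,z)) = \hat\theta^n_\sigma(t,z)$ is non-decreasing in $t$ by Corollary~\ref{2.2c}(iii). Hence everything boils down to controlling the linear-in-time interpolation error $\bar\theta^n_\sigma-\theta^n_\sigma$, which is governed by Theorem~\ref{3.1t}(v).

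From (\ref{4.3}),
$$
\pi_{14}\#\lambda^n = \int_{Y_{B_2}} d\mu_0(\theta)\sum_{\sigma\in S_\theta} n^n\mu^\theta_{1\sigma(1)}\cdots\mu^\theta_{n\sigma(n)}\int_{[0,1]}\delta_{(\bar\theta^n_\sigma,\Phi^n_\sigma(z))}\,dz.
$$
Since the weights sum to one, Fubini reduces the claim to showing that for each $(\theta,\sigma)$,
$$
I_\sigma := \int_0^1\int_{\{t_1<t_2\}}\bigl(g_\sigma(t_1,z)-g_\sigma(t_2,z)\bigr)^+\,dt_1\,dt_2\,dz \le 2C_5 T^2\sqrt{\delta t},
$$
where $g_\sigma(t,z) := \bar\theta^n_\sigma(t,F^n_\sigma(t,z))$ and $\{t_1<t_2\}\subset(0,T)^2$. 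Setting $f_\sigma(t,z) := \theta^n_\sigma(t,F^n_\sigma(t,z)) = \hat\theta^n_\sigma(t,z)$, the monotonicity from Corollary~\ref{2.2c}(iii) combined with the elementary inequality $(a-b)^+\le|a-c_1|+|c_2-b|$ (valid when $c_1\le c_2$) yields the pointwise bound
$$
\bigl(g_\sigma(t_1,z)-g_\sigma(t_2,z)\bigr)^+ \le |g_\sigma(t_1,z)-f_\sigma(t_1,z)| + |g_\sigma(t_2,z)-f_\sigma(t_2,z)|
$$
whenever $t_1<t_2$.

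For $t\in[k\delta t,(k+1)\delta t)$ the flow map $F^n_\sigma(t,\cdot)$ is constant in $t$, while by (\ref{3}) the function $\bar\theta^n_\sigma(t,y)$ is a convex combination of $\theta^n_\sigma(k\delta t,y)$ and $\theta^n_\sigma((k+1)\delta t,y)$ for every $y$. Hence $|g_\sigma(t,z)-f_\sigma(t,z)|\le|\theta^n_\sigma((k+1)\delta t,F^n_\sigma(t,z))-\theta^n_\sigma(k\delta t,F^n_\sigma(t,z))|$, and integrating in $z$ using that $F^n_\sigma(t,\cdot)$ is measure preserving gives
$$
\int_0^1|g_\sigma(t,z)-f_\sigma(t,z)|\,dz \le \|\theta^n_\sigma((k+1)\delta t,\cdot)-\theta^n_\sigma(k\delta t,\cdot)\|_{L^1} \le C_5\sqrt{2\delta t}
$$
uniformly in $t\in[0,T)$, by Theorem~\ref{3.1t}(v). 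Plugging back, $I_\sigma \le 2C_5\sqrt{2\delta t}\cdot T^2/2 = C_5 T^2\sqrt{2\delta t}\le 2C_5 T^2\sqrt{\delta t}$, and averaging over $(\theta,\sigma)$ preserves the bound.

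The only conceptual step is recognising $\hat\theta^n_\sigma$ as the right monotone skeleton to subtract; once that is seen, the rest is a clean combination of Fubini and the $L^1$-time continuity already proved in Theorem~\ref{3.1t}(v), so I do not anticipate substantial obstacles.
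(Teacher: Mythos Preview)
Your proof is correct and follows essentially the same approach as the paper: both expand $\lambda^n$ explicitly, use the monotonicity of $\hat\theta^n_\sigma$ from Corollary~\ref{2.2c}(iii) to reduce the positive part to the interpolation error $|\bar\theta^n_\sigma-\theta^n_\sigma|$, and control that error via Theorem~\ref{3.1t}(v). The only differences are cosmetic bookkeeping of constants (you integrate over the triangle $\{t_1<t_2\}$ rather than the full square, picking up a factor $T^2/2$ and a $\sqrt{2}$ from $\sqrt{2\delta t}$, which combine to the same bound).
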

\begin{proof}
According to the definition of $\lambda^n$, we find
\begin{equation*}
\begin{split}
&\quad\int_{(0,T)^2}\int_{Y\times X_{B_1}}(\tilde{\theta}_{t_1}(\gamma_{t_1})-\tilde{\theta}_{t_2}(\gamma_{t_2}))^+\chi_{t_1<t_2}
d\mathcal{L}^2(t_1,t_2)
d\pi_{14}\#\lambda^n(\theta,\gamma)\\
&=\int_{Y_{B_2}}\mu_0(\theta)\sum_{\sigma\in S_{\theta}}n^n\mu_{1\sigma(1)}^{\theta}
\cdots
\mu_{n\sigma(n)}^{\theta}
\int_{(0,T)^2}\int_{[0,1]}(\bar{\theta}^n_{\sigma,t_1}(F^n_{\sigma,t_1}(z))-\bar{\theta}^n_{\sigma,t_2}(F^n_{\sigma,t_2}(z)))^+\chi_{t_1<t_2}dt_1dt_2dz.
\end{split}
\end{equation*}
For each fixed $\sigma$ and $\theta$, it holds:
\begin{equation*}
\begin{split}
&\int_{(0,T)^2}\int_{[0,1]}(\bar{\theta}^n_{\sigma,t_1}(F^n_{\sigma,t_1}(z))-\bar{\theta}^n_{\sigma,t_2}(F^n_{\sigma,t_2}(z)))^+\chi_{t_1<t_2}dt_1dt_2dz\\
&\leq2T\int_0^T\int_{[0,1]}|\bar{\theta}^n_{\sigma,t}(F^n_{\sigma,t}(z))-\theta^n_{\sigma,t}(F^n_{\sigma,t}(z))|dtdz\\
&+\int_{(0,T)^2}\int_{[0,1]}(\theta^n_{\sigma,t_1}(F^n_{\sigma,t_1}(z))-\theta^n_{\sigma,t_2}(F^n_{\sigma,t_2}(z)))^+\chi_{t_1<t_2}dt_1dt_2dz\\
&=2T\int_0^T\int_{[0,1]}|\bar{\theta}^n_{\sigma,t}(z)-\theta^n_{\sigma,t}(z)|dtdz\leq 2T^2C_5\sqrt{\delta t}.
\end{split}
\end{equation*}
In the above calculation, we used point (iii) of Corollary \ref{2.2c}, hence for any $\sigma\in S_{\theta}$, $\theta^n_{\sigma,t_1}(F^n_{\sigma,t_1})\leq\theta^n_{\sigma,t_2}(F^n_{\sigma,t_2})$, for any $t_1<t_2$.
When estimating $\bar{\theta}^n_{\sigma}-\theta^n_{\sigma}$, we used (\ref{3}) and point (v) of Theorem \ref{3.1t}.
\end{proof}
\begin{lem}\label{5.8l}
For $\lambda-$a.e $(\tilde{\theta},s,z,\gamma)\in Y\times\mathbb{R}\times[0,1]\times X_{B_1}$, we have 
\begin{equation*}
\tilde{\theta}_{t_1}(\gamma_{t_1})\leq\tilde{\theta}_{t_2}(\gamma_{t_2}), \textrm{ for $\mathcal{L}^2-a.e$ $(t_1,t_2)\in(0,T)^2$ with $t_1<t_2$}.
\end{equation*}
\end{lem}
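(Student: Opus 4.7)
The plan is to combine the three preparatory results just established---the continuity of the averaged functional $I_\eps$ (Lemma \ref{5.6l}), the discrete monotonicity estimate (Lemma \ref{5.7l}), and the $A_\eps$-approximation bound (Lemma \ref{6.4ln})---to deduce the desired monotonicity for the limit measure $\lambda$, first in an $\eps$-averaged form and then by sending $\eps\to 0$.

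The key step is to estimate $\int I_\eps\,d\pi_{14}\#\lambda^n$ uniformly in $n$. Writing $I_0$ for the analogous functional with $\tilde\theta$ in place of $A_\eps\tilde\theta$, the elementary inequality $|a^+-b^+|\leq|a-b|$ applied to the two time slots of the integrand gives the pointwise bound $I_\eps(\tilde\theta,\gamma)\leq I_0(\tilde\theta,\gamma)+2T\int_0^T|(A_\eps\tilde\theta)_t(\gamma_t)-\tilde\theta_t(\gamma_t)|\,dt$. Integrating against $\pi_{14}\#\lambda^n$, Lemma \ref{5.7l} controls the first term by $2C_5T^2\sqrt{\delta t}$, while Lemma \ref{6.4ln} (applied for each $t\in(0,T)$ and integrated in $t$) bounds the second by $2T^2B_3\eps$. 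Since $I_\eps$ is continuous (Lemma \ref{5.6l}) and bounded by $2B_3T^2$, and $\pi_{14}\#\lambda^n\to\pi_{14}\#\lambda$ narrowly (by $\lambda^n\to\lambda$ and continuity of $\pi_{14}$), letting $n\to\infty$ with $\delta t\to 0$ yields $\int I_\eps\,d\pi_{14}\#\lambda\leq 2T^2B_3\eps$.

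To finish, I would use that for every $(\tilde\theta,\gamma)$ and every $(t,z)$, the right continuity of $\tilde\theta_t(\cdot)$ (with the extension fixed in the definition of $A_\eps$) gives $(A_\eps\tilde\theta)_t(z)\to\tilde\theta_t(z)$ as $\eps\to 0$; hence the integrand defining $I_\eps(\tilde\theta,\gamma)$ converges pointwise $\mathcal{L}^2$-a.e. on $(0,T)^2$ to that of $I_0(\tilde\theta,\gamma)$. A first application of dominated convergence (with uniform bound $2B_3$) gives $I_\eps\to I_0$ pointwise on $Y\times X_{B_1}$, and a second (with bound $2B_3T^2$) together with the previous display gives $\int I_0\,d\pi_{14}\#\lambda=0$. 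Since $I_0\geq 0$, this forces $I_0=0$ for $\pi_{14}\#\lambda$-a.e.\ $(\tilde\theta,\gamma)$, and hence for $\lambda$-a.e.\ $(\tilde\theta,s,z,\gamma)$, which is precisely the claim. The main obstacle---obtaining enough regularity to pass to the limit in $n$ inside the evaluation $\gamma_t\mapsto\tilde\theta_t(\gamma_t)$, which is discontinuous in $\gamma$---has already been overcome by the $z$-averaging built into Lemma \ref{5.6l}; the rest is bookkeeping that ties the averaging error ($\eps$) to the discrete-time error ($\sqrt{\delta t}$) so that both can be driven to zero in the correct order.
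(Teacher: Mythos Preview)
Your proposal is correct and follows essentially the same route as the paper's proof: both arguments use Lemma~\ref{6.4ln} to control the $A_\eps$-approximation error, Lemma~\ref{5.7l} to bound the un-averaged functional $I_0$ against $\lambda^n$, and the continuity of $I_\eps$ (Lemma~\ref{5.6l}) to pass to the limit $n\to\infty$, after which $\eps\to 0$ finishes the job. The only cosmetic difference is the order in which the two limits are organized---the paper first splits $\int I_0\,d\pi_{14}\#\lambda$ into an $\eps$-error plus $\int I_\eps\,d\pi_{14}\#\lambda$ and then handles the latter via $\lambda^n$, whereas you estimate $\int I_\eps\,d\pi_{14}\#\lambda^n$ directly and pass first in $n$, then in $\eps$---but the ingredients and the logic are identical.
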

\begin{proof}
It suffices to show that
\begin{equation}\label{4.8}
\int_{(0,T)^2}\int_{Y\times X_{B_1}}(\tilde{\theta}_{t_1}(\gamma_{t_1})-\tilde{\theta}_{t_2}(\gamma_{t_2}))^+\chi_{t_1<t_2}
d\mathcal{L}^2(t_1,t_2)
d\pi_{1,4}\#\lambda(\tilde{\theta},\gamma)=0.
\end{equation}
For any $\eps>0$, we can write
\begin{equation*}
\begin{split}
&\int_{(0,T)^2}\int_{Y\times X_{B_1}}(\tilde{\theta}_{t_1}(\gamma_{t_1})-\tilde{\theta}_{t_2}(\gamma_{t_2}))^+\chi_{t_1<t_2}
d\mathcal{L}^2(t_1,t_2)
d\pi_{1,4}\#\lambda(\tilde{\theta},\gamma)\\
&\leq 2T\int_0^T\int_{Y\times X_{B_1}}(\tilde{\theta}_t(\gamma_t)
-(A_{\eps}\tilde{\theta})_t
(\gamma_t))^+dtd\pi_{14}
\#\lambda(\tilde{\theta},\gamma)\\
&+\int_{(0,T)^2}\int_{Y\times X_{B_1}}((A_{\eps}\tilde{\theta})_{t_1}(\gamma_{t_1})-(A_{\eps}\tilde{\theta})_{t_2}(\gamma_{t_2}))^+\chi_{t_1<t_2}
d\mathcal{L}^2(t_1,t_2)
d\pi_{14}\#\lambda(\tilde{\theta},\gamma).
\end{split}
\end{equation*}
For the first term, it goes to zero as $\eps\rightarrow0$, since the integrant tends to 0 for each fixed $(t,\tilde{\theta},\gamma)$ and is clearly bounded. For the second term, we estimate:
\begin{equation*}
\begin{split}
&\int_{(0,T)^2}\int_{Y\times X_{B_1}}((A_{\eps}\tilde{\theta})_{t_1}(\gamma_{t_1})-(A_{\eps}\tilde{\theta})_{t_2}(\gamma_{t_2}))^+\chi_{t_1<t_2}
d\mathcal{L}^2(t_1,t_2)
d\pi_{14}\#\lambda^n(\tilde{\theta},\gamma)\\
&\leq 2T\int_0^T\int_{Y\times X_{B_1}}|\tilde{\theta}_{t}(\gamma_{t})-(A_{\eps}\tilde{\theta})_{t}(\gamma_{t})|dtd\pi_{14}\#\lambda^n(\tilde{\theta},\gamma)+2C_5T^2\sqrt{\delta t}\leq 2B_3T^2\eps+2C_5T^2\sqrt{\delta t}.
\end{split}
\end{equation*}
The last inequality follows from the Lemma \ref{6.4ln}, 
while the first inequality used Lemma \ref{5.7l}.
Now send $n\rightarrow\infty$, Lemma \ref{5.6l} allows us to conclude:
$$
\int_{(0,T)^2}\int_{Y\times X_{B_1}}((A_{\eps}\tilde{\theta})_{t_1}(\gamma_{t_1})-(A_{\eps}\tilde{\theta})_{t_2}(\gamma_{t_2}))^+\chi_{t_1<t_2}
d\mathcal{L}^2(t_1,t_2)
d\pi_{14}\#\lambda(\tilde{\theta},\gamma)\leq 2B_3T^2\eps.
$$
The proof is completed by sending $\eps\rightarrow0$.
\end{proof}

Up to now, we have checked points (i)-(iii) in Definition \ref{1.6d}. It only remains to check point (iv). 
Our first goal will be to show the measure $\partial_t(\tilde{\theta}_t(\gamma_t))$ is concentrated on the "wet" set.
As preparation, we prove the following:
\begin{lem}
For any $\eps,\,\eps_1,\,\eps_2>0$, define the function $K_i:Y\times\mathbb{R}\times X_{B_1}\rightarrow\mathbb{R}$, $i=1,2$, given by 
\begin{equation*}
\begin{split}
&K_1(\tilde{\theta},s,\gamma)=\int_{(0,T)^2}\chi_{\{0<t_2-t_1<\eps_1\}}|(A_{\eps}\tilde{\theta})_{t_1}(\gamma_{t_1})-(A_{\eps}\tilde{\theta})_{t_2}(\gamma_{t_2})|\chi_{\{(A_{\eps}\tilde{\theta})_{t_1}(\gamma_{t_1})>\Theta(s,\gamma_{t_1},t_1)+\eps_2\}}
dt_1dt_2,\\
&K_2(\tilde{\theta},s,\gamma)=\int_{(0,T)^2}\chi_{\{0<t_2-t_1<\eps_1\}}(\gamma_{t_2}-\gamma_{t_1})^+
\chi_{\{(A_{\eps}\tilde{\theta})_{t_1}(\gamma_{t_1})>\Theta(s,\gamma_{t_1},t_1)+\eps_2\}}.
\end{split}
\end{equation*}
Then $K_i$ is lower semi-continuous, $i=1,2$.
\begin{proof}
Let $(\tilde{\theta}^k,s^k,\gamma^k)\rightarrow(\tilde{\theta},s,\gamma)$, need to show $K_i(\tilde{\theta},s,\gamma)\leq
\lim\inf_{k\rightarrow\infty}K_i(\tilde{\theta}^k,s^k,\gamma^k)$.

As explained in the proof of Lemma \ref{5.6l}, for any $t$ such that $\gamma^k(t)\rightarrow\gamma(t)$, we have $(A_{\eps}\tilde{\theta}^k)_t
(\gamma^k_{t})\rightarrow (A_{\eps}\tilde{\theta})_t(\gamma_t)$. Hence for such $t$
$$
\chi_{\{(A_{\eps}\tilde{\theta})_{t}(\gamma_t)>\Theta(s,\gamma_t,t)+\eps_2\}}\leq
\lim\inf_{k\rightarrow\infty}
\chi_{\{(A_{\eps}\tilde{\theta}^k)_{t}(\gamma^k_t)>\Theta(s,\gamma^k_t,t)+\eps_2\}}.$$
The integrand is lower semi-continuous with respect to $(\tilde{\theta},s,\gamma)$ if we fix $(t_1,t_2)$ such that $\gamma^k_{t_i}
\rightarrow\gamma_{t_i}$, $i=1,2$. 
Then the lower semi-continuity of $J_i$ follows from applying Fatou`s lemma.
\end{proof}
\end{lem}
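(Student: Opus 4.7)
The approach is a standard Fatou-type argument once one pins down pointwise convergence of the paths along subsequences. Let me take a convergent sequence $(\tilde\theta^k,s^k,\gamma^k)\to(\tilde\theta,s,\gamma)$ in $Y\times\mathbb{R}\times X_{B_1}$ and aim to show $\liminf_k K_i(\tilde\theta^k,s^k,\gamma^k)\geq K_i(\tilde\theta,s,\gamma)$. It suffices (by the usual subsequence trick: pass to any subsequence along which $K_i(\tilde\theta^k,s^k,\gamma^k)$ converges to its lower limit) to argue after extracting further subsequences.

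The first step is to promote the $L^2$ convergence $\gamma^k\to\gamma$ to a pointwise statement. Since each $\gamma^k$ is left-continuous with total variation bounded by $B_1$ and ranges in $[0,1]$, Helly's selection principle lets me extract a subsequence $\gamma^{k_j}$ converging pointwise on $(0,T)$ off a countable exceptional set $N$. The $L^2$-limit of this subsequence must coincide with $\gamma$ a.e., and left-continuity of $\gamma$ forces agreement off $N$. Enlarging $N$ if needed, I then have $\gamma^{k_j}_t\to\gamma_t$ for every $t\in(0,T)\setminus N$. Combined with the uniform bound $|(A_{\eps}\tilde\theta^{k_j})_t(z)-(A_{\eps}\tilde\theta)_t(z)|\leq\eps^{-1/2}\|\tilde\theta^{k_j}_t-\tilde\theta_t\|_{L^2}$ (which is uniform in $t$ by the definition of the metric on $Y$) and the fact that $(A_{\eps}\tilde\theta)_t$ is continuous in $z$, I conclude $(A_{\eps}\tilde\theta^{k_j})_t(\gamma^{k_j}_t)\to(A_{\eps}\tilde\theta)_t(\gamma_t)$ for all $t\notin N$.

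The second step is to check pointwise lower semi-continuity of the integrand for $(t_1,t_2)\in((0,T)\setminus N)^2$, which is a set of full $\mathcal{L}^2$-measure. The indicator $\chi_{\{0<t_2-t_1<\eps_1\}}$ is fixed. The amplitude factor is continuous under the convergence just established: for $K_1$ this is clear from the continuity of $(A_{\eps}\tilde\theta)_{t_i}(\gamma_{t_i})$ in the limit variables, and for $K_2$ from the direct convergence $\gamma^{k_j}_{t_i}\to\gamma_{t_i}$. The indicator $\chi_{\{(A_{\eps}\tilde\theta)_{t_1}(\gamma_{t_1})>\Theta(s,\gamma_{t_1},t_1)+\eps_2\}}$ is lower semi-continuous because the defining inequality is strict and $\Theta$ is continuous. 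The product of a non-negative continuous function with a non-negative lower semi-continuous function is lower semi-continuous, so the full integrand satisfies
\[
(\text{integrand at limit})(t_1,t_2)\leq\liminf_{j\to\infty}(\text{integrand at }(\tilde\theta^{k_j},s^{k_j},\gamma^{k_j}))(t_1,t_2)
\]
for a.e. $(t_1,t_2)$. Fatou's lemma on $(0,T)^2$ then gives $K_i(\tilde\theta,s,\gamma)\leq\liminf_j K_i(\tilde\theta^{k_j},s^{k_j},\gamma^{k_j})$, completing the argument.

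The main obstacle is the first step: the metric on $X_{B_1}$ is $L^2$, which only yields a.e.-pointwise convergence of a subsequence, not the full sequence. The BV bound and left-continuity are what salvage a clean pointwise statement via Helly. After that, everything reduces to observing that the indicator factor is the indicator of an open set in the appropriate sense, so Fatou handles the rest with no additional work.
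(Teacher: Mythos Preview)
Your argument is correct and follows essentially the same route as the paper: Helly to get pointwise convergence of the paths off a countable set, uniform convergence of $(A_\eps\tilde\theta^k)_t$ from the $L^2$ bound, lower semi-continuity of the strict-inequality indicator, then Fatou. If anything, your treatment of the subsequence issue is more careful than the paper's, which asserts pointwise convergence of the full sequence via Helly without explicitly noting the subsequence trick.
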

\sloppy
\begin{lem}\label{5.10l}
Let $C_2$ be the universal constant given by Theorem \ref{3.1t}, point (ii). Then we have for any $\eps>0$
\begin{equation}\label{5.16}
\int_{[0,T]^2}\chi_{\{0<t_2-t_1<\frac{\eps}{C_2}\}}dt_1dt_2\int|\tilde{\theta}_{t_2}(\gamma_{t_2})-\tilde{\theta}_{t_1}(\gamma_{t_1})|\chi_{\{\tilde{\theta}_{t_1}(\gamma_{t_1})>\Theta(s,\gamma_{t_1},t_1)+2\eps\}}d\lambda(\tilde{\theta}
,s,z,\gamma)=0.
\end{equation}
and 
\begin{equation}\label{5.17}
\int_{[0,T]^2}\chi_{\{0<t_2-t_1<\frac{\eps}{C_2}\}}dt_1dt_2\int(
\gamma_{t_2}-\gamma_{t_1})^+
\chi_{\{\tilde{\theta}_{t_1}(\gamma_{t_1})>
\Theta(s,\gamma_{t_1},t_1)+2\eps\}}
d\lambda(\tilde{\theta},s,z,\gamma)=0.
\end{equation}
\end{lem}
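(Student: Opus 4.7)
My plan is to prove both identities first for the discrete approximating measures $\lambda^n$ by direct appeal to Theorem \ref{3.1t}(ii), then pass to the limit $n \to \infty$ with the averaging trick developed in the preceding lemma.

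For the discrete part, I would unfold the definition of $\lambda^n$ and rewrite the integrand as a sum over $\theta \in \mathrm{supp}(\mu_0)$ and $\sigma \in S_\theta$ of expressions built from $\bar\theta^n_\sigma$ and $F^n_\sigma$. The condition $\tilde\theta_{t_1}(\gamma_{t_1}) > \Theta(s,\gamma_{t_1},t_1) + 2\eps$ then translates, up to the gap between the linear interpolation $\bar\theta^n_\sigma$ and the piecewise constant $\theta^n_\sigma$ and up to the $O(1/n)$ grid discretization error already appearing in Corollary \ref{3.3c}, into the hypothesis of Theorem \ref{3.1t}(ii) at time $t_1$ with excess $\eps$; the factor $2$ in the threshold $2\eps$ is exactly what provides the required headroom. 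Theorem \ref{3.1t}(ii) then gives, for $t_2 - t_1 < \eps/C_2$, that $\hat\theta^n_\sigma(t_2,z) = \hat\theta^n_\sigma(t_1,z)$ and $F^n_{\sigma,t_2}(z) \le F^n_{\sigma,t_1}(z)$. The second fact kills the integrand of \eqref{5.17} at the discrete level; the first kills the integrand of \eqref{5.16} up to an $L^1$ error in $(t_1,t_2,z)$ of size $O(\sqrt{\delta t})$ coming from the $\bar\theta$ vs.\ $\theta$ discrepancy and the time-interpolation.

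To pass to $\lambda$ I would replace both the indicator and the difference $|\tilde\theta_{t_2}(\gamma_{t_2}) - \tilde\theta_{t_1}(\gamma_{t_1})|$ by their $A_\eta$-averaged counterparts, producing precisely the lower semi-continuous functions $K_1, K_2$ of the preceding lemma with $\eps_1 = \eps/C_2$ and $\eps_2 = \eps$. For the indicator the replacement costs at most $\eps^{-1} B_3 \eta$ via Markov's inequality plus Lemma \ref{6.4ln}, and the replacement inside the absolute value costs $O(\eta)$ by the same lemma; both errors are uniform in $n$. Lower semi-continuity of $K_1, K_2$ combined with the narrow convergence $\lambda^n \to \lambda$ then lets us transfer the discrete estimates to $\lambda$. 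Sending first $n \to \infty$ and then $\eta \to 0$ finishes the proof.

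The main obstacle I anticipate is controlling the interpolation gap $\bar\theta^n_\sigma - \theta^n_\sigma$: its pointwise bound can be $O(1)$, but the two functions agree at every grid time $k\delta t$, so integrating in $t$ against bounded quantities and using the continuity estimates of Theorem \ref{3.1t}(v),(vi) yields a universal $O(\sqrt{\delta t})$ bound. Handling this error cleanly, rather than drowning in indices, is what needs the most care.
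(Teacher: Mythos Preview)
Your proposal is correct and follows essentially the same route as the paper: introduce the averaging $A_\eta$ so as to work with the lower semi-continuous functionals $K_1,K_2$ from the preceding lemma, pass to the limit $\lambda^n\to\lambda$ via lower semi-continuity, and on the discrete side trade $\bar\theta^n_\sigma$ for $\theta^n_\sigma$ (at $L^1$ cost $O(\sqrt{\delta t})$ by Theorem~\ref{3.1t}(v)) so that Theorem~\ref{3.1t}(ii) forces the integrand to vanish. The paper does exactly this, with the minor cosmetic difference that it drops the threshold from $2\eps$ to $1.5\eps$ at the indicator-replacement step and then to $\eps$ at the $\bar\theta\to\theta$ step, and it cites point (iv) where point (ii) is clearly meant.
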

\begin{proof}
We only prove (\ref{5.16}). The proof of (\ref{5.17}) follows similar lines and is simpler. Fix $0<\delta<\eps$. Denote $\chi_{\delta,\eps}=\chi_{\{(A_{\delta}\tilde{\theta})_{t_1}(\gamma_{t_1})>\Theta(s,\gamma_{t_1},t_1)+\eps\}}$. From the lower semi-continuity proved in previous lemma, we conclude:
\begin{equation}\label{5.18}
\begin{split}
&\int_{[0,T]^2}\chi_{\{0<t_2-t_1<\frac{\eps}{C_2}\}}dt_1dt_2\int|(A_{\delta}\tilde{\theta})_{t_2}(\gamma_{t_2})-(A_{\delta}\tilde{\theta})_{t_1}(\gamma_{t_1})|\chi_{\delta,2\eps}d\lambda(\theta
,s,z,\gamma)\\
&\leq\lim\inf_{n\rightarrow\infty}\int_{[0,T]^2}\chi_{\{0<t_2-t_1<\frac{\eps}{C_2}\}}dt_1dt_2\int|(A_{\delta}\tilde{\theta})_{t_2}(\gamma_{t_2})-(A_{\delta}\tilde{\theta})_{t_1}(\gamma_{t_1})|\chi_{\delta,2\eps}d\lambda^n(\tilde{\theta}
,s,z,\gamma).
\end{split}
\end{equation}
We know the left hand side of (\ref{5.18}) will tend to the left hand side of (\ref{5.16}) as $\delta\rightarrow0$. Next we estimate the right hand side of (\ref{5.18}).
\begin{equation}\label{6.21newnew}
\begin{split}
&\int_{[0,T]^2}\chi_{\{0<t_2-t_1<\frac{\eps}{C_2}\}}dt_1dt_2\int|(A_{\delta}\tilde{\theta})_{t_2}(\gamma_{t_2})-(A_{\delta}\tilde{\theta})_{t_1}(\gamma_{t_1})|\chi_{\delta,2\eps}d\lambda^n(\tilde{\theta}
,s,z,\gamma)\\
&\leq2T\int_0^T\int_{Y\times X_{B_1}}|(A_{\delta}\tilde{\theta})_t(\gamma_t)-
\tilde{\theta}_t(\gamma_t)|d\pi_{14}\#\lambda^n(\tilde{\theta},\gamma)\\
&+\int_{[0,T]^2}\chi_{\{0<t_2-t_1<\frac{\eps}{C_2}\}}dt_1dt_2\int|\tilde{\theta}_{t_2}(\gamma_{t_2})-\tilde{\theta}_{t_1}(\gamma_{t_1}))|\chi_{\delta,2\eps}d\lambda^n(\tilde{\theta}
,s,z,\gamma)\\
&\leq2B_3T^2\delta+\int_{[0,T]^2}\chi_{\{0<t_2-t_1<\frac{\eps}{C_2}\}}dt_1dt_2\int|\tilde{\theta}_{t_2}(\gamma_{t_2})-\tilde{\theta}_{t_1}(\gamma_{t_1}))|\chi_{\{\tilde{\theta}_{t_1}(\gamma_{t_1}))>
\Theta(s,\gamma_{t_1},t_1)+1.5\eps\}}d\lambda^n\\
&+B_3T\int_0^Tdt_1\int(\chi_{\delta,2\eps}
-\chi_{\{\tilde{\theta}_{t_1}(\gamma_{t_1}))>
\Theta(s,\gamma_{t_1},t_1)+1.5\eps\}})^+d\lambda^n.
\end{split}
\end{equation}
In the second inequality, we used Lemma \ref{6.4ln}. For the second term of the right hand side above, we calculate:
\begin{equation}\label{6.21new}
\begin{split}
&\int_{[0,T]^2}\chi_{\{0<t_2-t_1<\frac{\eps}{C_2}\}}dt_1dt_2\int|\tilde{\theta}_{t_2}(\gamma_{t_2})-\tilde{\theta}_{t_1}(\gamma_{t_1}))|\chi_{\{\tilde{\theta}_{t_1}(\gamma_{t_1}))>
\Theta(s,\gamma_{t_1},t_1)+1.5\eps\}}
d\lambda^n\\
&=\int_{Y_{B_2}}d\mu_0(\theta)\sum_{\sigma\in S_{\theta}}n^n\mu_{1\sigma(1)}^{\theta}
\cdots
\mu_{n\sigma(n)}^{\theta}
\int_{(0,T)^2}\int_0^1\chi_{\{0<t_2-t_1<\frac{\eps}{C_2}\}}|\bar{\theta}^n_{\sigma,t_2}(F^n_{\sigma,t_2}(z))-\bar{\theta}^n_{\sigma,t_1}(F^n_{\sigma,t_1}(z))|\\
&\chi_{\{\bar{\theta}^n_{\sigma,t_1}(F^n_{\sigma,t_1}(z))>
\Theta(\theta^{M,n}_{0,\sigma}(z),F^n_{\sigma,t_1}(z),t_1)+1.5\eps\}}dt_1dt_2dz.
\end{split}
\end{equation}
Now define: 
\begin{equation*}
\begin{split}
&\chi_1=\chi_{\{\bar{\theta}^n_{\sigma,t_1}(F^n_{\sigma,t_1})>\Theta(\theta^{M,n}_{0,\sigma}(z),F_{\sigma,t_1}^n(z),t_1)+1.5\eps\}},\\
&\chi_2=\chi_{\{\theta^n_{\sigma,t_1}(F^n_{\sigma,t_1})>\Theta(\theta^{M,n}_{0,\sigma}(z),F_{\sigma,t_1}^n(z),t_1)+\eps\}}.
\end{split}
\end{equation*}
We now estimate the right hand side of (\ref{6.21new}). For any $\theta\in Y_{B_2}$ and $\sigma\in S_{\theta}$, 
\begin{equation*}
\begin{split}
&\int_{(0,T)^2}\int_0^1\chi_{\{0<t_2-t_1<\frac{\eps}{C_2}\}}|\bar{\theta}^n_{\sigma,t_2}(F^n_{\sigma,t_2}(z))-\bar{\theta}^n_{\sigma,t_1}(F^n_{\sigma,t_1}(z))|\chi_1dt_1dt_2dz\\
&\leq2T\int_0^T\int_0^1|\bar{\theta}^n_{\sigma,t}(F^n_{\sigma,t}(z))-\theta^n_{\sigma,t}(F^n_{\sigma,t}(z))|dtdz\\
&+\int_{(0,T)^2}\int_0^1\chi_{\{0<t_2-t_1<\frac{\eps}{C_2}\}}|\theta^n_{\sigma,t_2}(F^n_{\sigma,t_2}(z))-\theta^n_{\sigma,t_1}(F^n_{\sigma,t_1}(z))|\chi_1dt_1dt_2dz\\
&\leq 2T^2C_5\sqrt{\delta t}+\int_{(0,T)^2}\int_0^1\chi_{\{0<t_2-t_1<\frac{\eps}{C_2}\}}|\theta^n_{\sigma,t_2}(F^n_{\sigma,t_2}(z))-\theta^n_{\sigma,t_1}(F^n_{\sigma,t_1}(z))|\chi_2dt_1dt_2dz\\
&+2B_1\int_{(0,T)^2}
\int_0^1(\chi_1-\chi_2)^+dt_1dt_2dz.
\end{split}
\end{equation*}
In the last inequality, we used the point (v) of Theorem \ref{3.1t}.
The second term of right hand side above is 0, due to point (iv) of Theorem \ref{3.1t}.
To estimate the last term, we notice
\begin{equation*}
\begin{split}
&\int_{(0,T)^2}\int_0^1(\chi_1-\chi_2)^+
dt_1dt_2dz\leq\int_{(0,T)^2}\int_0^1\chi_{\{\bar{\theta}^n_{\sigma,t_1}(F^n_{\sigma,t_1}(z))-\theta^n_{\sigma,t_1}(F^n_{\sigma,t_1}(z))>0.5\eps\}}\\
&\leq\frac{2T}{\eps}\int_0^T\int_0^1|\bar{\theta}^n_{\sigma,t}(z)-\theta^n_{\sigma,t}(z)|dzdt\leq\frac{2TC_5\sqrt{\delta t}}{\eps}.
\end{split}
\end{equation*}

For the last term of (\ref{6.21newnew}), we have
\begin{equation*}
\begin{split}
&\int_0^Tdt_1
\int(\chi_{\delta,2\eps}-
\chi_{\{\tilde{\theta}_{t_1}(\gamma_{t_1}))>
\Theta(s,\gamma_{t_1},t_1)+\eps\}})^+
d\lambda^n\\
&\leq\int_0^Tdt_1\int\chi_{\{|(A_{\delta}\tilde{\theta})_{t_1}(\gamma_{t_1})-\tilde{\theta}_{t_1}(\gamma_{t_1}))|\geq\eps\}}d\lambda^n\leq\eps^{-1}
\int_0^Tdt_1\int|(A_{\delta}\tilde{\theta})_{t_1}(\gamma_{t_1})-\tilde{\theta}_{t_1}(\gamma_{t_1}))|d\lambda^n\\
&\leq\eps^{-1}B_3T\delta.
\end{split}
\end{equation*}
In the last inequality, we use Lemma \ref{6.4ln} again. 
Combining the calculations above, we obtain the left hand side of (\ref{5.18}) $\leq\eps^{-1}B_3T\delta+2B_3T^2\delta$.
The proof follows from sending $\delta\rightarrow0$.
\end{proof}

By Remark \ref{1.4r}, and Lemma \ref{5.8l}, we know for $\pi_{14}\#\lambda-a.e$ $(\theta,\gamma)$, one can determine a unique monotone increasing, and left continuous function $(\theta(\gamma))^*$, which equals $\theta_t(\gamma_t)$ for $\mathcal{L}^1-a.e$ $t\in(0,T)$. We will simply denote this function by $\alpha(t)$ in the following lemma.

\begin{lem}\label{5.11l}
Let $\eps>0$, then for $\lambda-$a.e $(\tilde{\theta},s,z,\gamma)\in Y\times X_{B_1}$, the following property holds:

For any $t\in(0,T)$ such that $\alpha(t)>\Theta(s,\gamma_t,t)+\eps$, it holds $\alpha(t^{\prime})=\alpha(t)$, and $\gamma_{t^{\prime}}\leq\gamma_t$, for any $0<t^{\prime}-t<\frac{\eps}{2C_2}$. Here $C_2$ is the constant given by point (ii), Theorem \ref{3.1t}.
\end{lem}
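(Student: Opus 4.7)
The strategy is to upgrade the measure-theoretic identities (\ref{5.16}) and (\ref{5.17}) of Lemma \ref{5.10l} to a pointwise ``every $t$'' statement, using left-continuity of $\alpha$ and of $\gamma\in X_{B_1}$ to promote ``a.e.\ $t$'' conclusions to ``every $t$''.

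First, apply Lemma \ref{5.10l} with parameter $\eps_k/2$ in place of $\eps$, for each $\eps_k$ in a countable dense subset of $(0,\eps]$, and combine with Lemma \ref{5.8l} (so that $\tilde\theta_t(\gamma_t)=\alpha(t)$ for $\mathcal L^1$-a.e.\ $t$). Intersecting the resulting countably many $\lambda$-full measure sets produces a single $\lambda$-full measure set $A$ such that for every $(\tilde\theta,s,z,\gamma)\in A$ and every $k$ there is a Lebesgue full-measure set $G_k\subset(0,T)$ with the following property: for each $t_1\in G_k$ with $\alpha(t_1)>\Theta(s,\gamma_{t_1},t_1)+\eps_k$, for $\mathcal L^1$-a.e.\ $t_2\in(t_1,t_1+\eps_k/(2C_2))$ one has $\alpha(t_1)=\alpha(t_2)$ and $\gamma_{t_2}\leq\gamma_{t_1}$.

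Now fix $(\tilde\theta,s,z,\gamma)\in A$, a time $t\in(0,T)$ with $\alpha(t)>\Theta(s,\gamma_t,t)+\eps$, and an arbitrary $t'\in(t,t+\eps/(2C_2))$. Select $\eps_k<\eps$ close enough to $\eps$ that $t'-t<\eps_k/(2C_2)$. Left-continuity of $\alpha$ and of $\gamma$ at $t$, combined with smoothness of $\Theta$, gives $\alpha(t_n)\to\alpha(t)$ and $\Theta(s,\gamma_{t_n},t_n)\to\Theta(s,\gamma_t,t)$ along any $t_n\uparrow t$. Since $G_k$ has full measure, choose $t_n\in G_k$ with $t_n\uparrow t$; for $n$ sufficiently large, both $\alpha(t_n)>\Theta(s,\gamma_{t_n},t_n)+\eps_k$ and $t'\in(t_n,t_n+\eps_k/(2C_2))$.

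For each such $t_n$ the full-measure subset of $(t_n,t_n+\eps_k/(2C_2))$ on which the implication holds contains a sequence $t'_m\uparrow t'$; on this sequence $\alpha(t'_m)=\alpha(t_n)$ and $\gamma_{t'_m}\leq\gamma_{t_n}$. Sending $m\to\infty$ and invoking left-continuity of $\alpha$ and $\gamma$ at $t'$ gives $\alpha(t')=\alpha(t_n)$ and $\gamma_{t'}\leq\gamma_{t_n}$; then $n\to\infty$ delivers $\alpha(t')=\alpha(t)$ and $\gamma_{t'}\leq\gamma_t$. The main subtlety is this double limit: we must work with a slightly smaller threshold $\eps_k<\eps$ (chosen from a countable dense family to preserve $\lambda$-full measure) so that the strict dryness hypothesis survives the left-approximation $t_n\uparrow t$, and we must select the approximating points $t_n$ and $t'_m$ inside sets known only to have full Lebesgue measure—both selections are available because those sets are dense near $t$ and $t'$ respectively.
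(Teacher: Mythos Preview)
Your proof is correct and follows essentially the same route as the paper: invoke Lemma \ref{5.10l} (with parameter $\eps/2$), combine with Lemma \ref{5.8l} to replace $\tilde{\theta}_t(\gamma_t)$ by $\alpha(t)$, use Fubini to localise the vanishing double integral to a.e.\ $t_1$, then exploit left-continuity of $\alpha$ and $\gamma$ to approximate an arbitrary $t$ from the left by good times and pass to the limit. The only difference is cosmetic: you introduce a countable dense family $\{\eps_k\}\subset(0,\eps]$ to ensure the strict dryness survives the approximation $t_n\uparrow t$, whereas the paper simply observes that left-continuity of $t\mapsto\alpha(t)-\Theta(s,\gamma_t,t)$ preserves the strict inequality $>\eps$ on a left-neighbourhood of $t$ with the \emph{same} threshold $\eps$, so a single application of Lemma \ref{5.10l} suffices; the paper also collapses your double limit into a single one by setting $t_2^n=t_1^n+(t'-t)$.
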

\begin{proof}
Let $(\theta,\gamma)$ be chosen so that the statement of Lemma \ref{5.8l} holds true, so that we can define $\alpha(t)$. Let $(\tilde{\theta},s,z,\gamma)$ also satisfy that 
\begin{equation}\label{6.23new}
\int_{[0,T]^2}\chi_{\{0<t_2-t_1<\frac{\eps}{2C}\}}|\tilde{\theta}_{t_2}(\gamma_{t_2})-\tilde{\theta}_{t_1}(\gamma_{t_1}))|\chi_{\{\tilde{\theta}_{t_1}(\gamma_{t_1}))>\Theta(s,\gamma_{t_1},t_1)+\eps\}}dt_1dt_2=0,
\end{equation}
and
\begin{equation}\label{6.24new}
\int_{[0,T]^2}\chi_{\{0<t_2-t_1<\frac{\eps}{2C}\}}dt_1dt_2\int(
\gamma_{t_2}-\gamma_{t_1})^+
\chi_{\{\tilde{\theta}_{t_1}(\gamma_{t_1}))>
\Theta(s,\gamma_{t_1},t_1)+\eps\}}=0.
\end{equation}
We know from Lemma \ref{5.10l} that (\ref{6.23new}) and (\ref{6.24new}) holds for $\lambda-$a.e $(\tilde{\theta},s,z,\gamma)$, 
for $\lambda-a.e$ $(\tilde{\theta},s,z,\gamma)$. Choose some $(\tilde{\theta},s,z,\gamma)$ so that (\ref{6.23new}) and (\ref{6.24new}) hold, then we have $\tilde{\theta}_{t_2}(\gamma_{t_2})=\tilde{\theta}_{t_1}(\gamma_{t_1}))$ and $\gamma_{t_2}\leq\gamma_{t_1}$, for $\mathcal{L}^2-a.e$ $(t_1,t_2)$ with $\tilde{\theta}_{t_1}(\gamma_{t_1}))>\Theta(s,\gamma_{t_1},t_1)+\eps$ and $0<t_2-t_1<\frac{\eps}{2C}$.

Fix any $(t_1,t_2)$ with $0<t_2-t_1<\frac{\eps}{2C_2}$ and $\alpha(t_1)>\Theta(s,\gamma_{t_1},t_1)+\eps$. By left continuity, there exists $\delta >0$, such that $\alpha(t_1')>\Theta(s,\gamma_{t_1'},t_1')+\eps$ for any $t_1'\in(t_1-\delta,t_1)$. 
By Fubini`s theorem, we know for $\mathcal{L}^1-$a.e $t_1'\in(t_1-\delta ,t_1)$, 
we have $\alpha(t'_2)=\tilde{\theta}_{t'_2}(\gamma_{t'_2})=\tilde{\theta}_{t'_1}(\gamma_{t'_1})$ and $\gamma_{t'_2}\leq\gamma_{t'_1}$ for $\mathcal{L}^1-a.e$ $t'_2\in(t_1',t_1'+\frac{\eps}{2C_2})$.
By left contiuity of $\alpha(t)$ and $\gamma(t)$,
 we conclude $\alpha(t'_2)=\tilde{\theta}_{t'_1}(\gamma_{t'_1})$ and $\gamma_{t'_2}\leq\gamma_{t'_1}$ for $any$ $t'_2\in(t_1',t_1'+\frac{\eps}{2C_2})$. This is true for $\mathcal{L}^1-a.e$ $t_1'\in(t_1-\delta,t_1)$.
Hence we can find a sequence $\{t_1^n\}_{n=1}^{\infty}\subset(t_1-\delta,t_1)$ such that this is true for $t_1^n$ and $t_1^n\rightarrow t_1$ as $n\rightarrow\infty$.
We can assume $\tilde{\theta}_{t_1^n}(\gamma_{t_1^n})=\alpha(t_1^n)$ holds.
Define $t_2^n=t_1^n+t_2-t_1$, then $t_2^n\rightarrow t_2$, and we have $\alpha(t_2^n)=\alpha(t_1^n)$ and $\gamma_{t_2^n}\leq\gamma_{t_1^n}$.
Let $n\rightarrow\infty$, and use left continuity of $\alpha$ and $\gamma$ one more time, we can conclude $\alpha(t_1)=\alpha(t_1)$ and $\gamma_{t_2}\leq\gamma_{t_1}$.
\end{proof}
\begin{lem}
For any $C>0$, $\eps>0$, define the function $H_{\eps}:Y\times\mathbb{R}\times X_{B_1}\rightarrow\mathbb{R}$, given by
\begin{equation*}
\begin{split}
H_{\eps}(\tilde{\theta},s,\gamma)&=\int_{(0,T)^2}\bigg(|(A_{\eps}\tilde{\theta})_{t_2}
(\gamma_{t_2})
-(A_{\eps}\tilde{\theta})_{t_1}(\gamma_{t_1})-(\Theta(s,\gamma_{t_2},t_2)\\
&-\Theta(s,\gamma_{t_1},t_1))^+|-C(t_2-t_1)\bigg)^+\chi_{t_1<t_2}dt_1dt_2.
\end{split}
\end{equation*}
Then $H_{\eps}$ is continuous.
\begin{proof}
The proof of this lemma is quite similar to Lemma \ref{5.8l}. We already noted that $(\tilde{\theta}^k,\gamma^k)\rightarrow(\tilde{\theta},\gamma)$ implies $(A_{\eps}\tilde{\theta}^k)_t
(\gamma^k_t)\rightarrow
(A_{\eps}\tilde{\theta})_t(\gamma_t)$ for any $t$ such that pointwise convergence of $\gamma^k_t$ happens. 
The proof follows then from dominated convergence since everything is bounded.
\end{proof}
\end{lem}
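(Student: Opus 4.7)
The plan is to take a convergent sequence $(\tilde\theta^k,s^k,\gamma^k)\to(\tilde\theta,s,\gamma)$ in $Y\times\mathbb{R}\times X_{B_1}$ and pass to the limit inside the double integral defining $H_\eps$ by dominated convergence. The integrand is a fixed continuous function of the four scalar quantities $(A_\eps\tilde\theta)_{t_i}(\gamma_{t_i})$ and $\Theta(s,\gamma_{t_i},t_i)$, $i=1,2$, assembled from differences, the positive part, the absolute value, subtraction of $C(t_2-t_1)$, a second positive part, and multiplication by $\chi_{t_1<t_2}$. So the whole problem reduces to two ingredients: $\mathcal{L}^2$-a.e.\ pointwise convergence of the integrand on $(0,T)^2$, and a uniform domination.

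For the pointwise convergence of $(A_\eps\tilde\theta^k)_t(\gamma^k_t)$ I would reuse the observation already made in the proof of Lemma \ref{5.6l}: the inequality $|(A_\eps\tilde\theta^k)_t(z)-(A_\eps\tilde\theta)_t(z)|\le\eps^{-1/2}\|\tilde\theta^k_t-\tilde\theta_t\|_{L^2}$ forces $A_\eps\tilde\theta^k\to A_\eps\tilde\theta$ uniformly on $(0,T)\times[0,1]$. Helly's selection principle applied inside $X_{B_1}$ gives $\gamma^k_t\to\gamma_t$ for all but countably many $t$, and combining the two facts yields $(A_\eps\tilde\theta^k)_t(\gamma^k_t)\to(A_\eps\tilde\theta)_t(\gamma_t)$ for $\mathcal{L}^1$-a.e.\ $t$. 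For $\Theta(s^k,\gamma^k_{t_i},t_i)\to\Theta(s,\gamma_{t_i},t_i)$, continuity of $\Theta$ on the compact set $\{|w|\le\sup_k|s^k|\}\times[0,1]\times[0,T]$ suffices at every $t_i$ for which $\gamma^k_{t_i}\to\gamma_{t_i}$. Fubini then transfers these $\mathcal{L}^1$-a.e.\ statements to $\mathcal{L}^2$-a.e.\ pointwise convergence of the integrand on $(0,T)^2$.

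For the domination, the uniform bound $\|A_\eps\tilde\theta^k\|_\infty\le B_3$ together with boundedness of $\Theta$ over the relevant compact set yields a constant majorant of the integrand independent of $k$, and $(0,T)^2$ has finite measure. Dominated convergence then gives $H_\eps(\tilde\theta^k,s^k,\gamma^k)\to H_\eps(\tilde\theta,s,\gamma)$, so $H_\eps$ is continuous.

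The only mild subtlety is that $\gamma^k\to\gamma$ in $X_{B_1}$ is $L^2$-convergence, not uniform in $t$, so pointwise convergence of $\gamma^k_t$ is only guaranteed off an at-most-countable set; but $\mathcal{L}^2$-a.e.\ convergence on $(0,T)^2$ is exactly what dominated convergence consumes, so no real obstacle arises. I expect no substantive difficulty here, and the argument is strictly easier than that of Lemma \ref{5.8l}, since we need only continuity (not lower semi-continuity) and the bounded continuous assembly in the integrand can be compared directly rather than probed with characteristic functions.
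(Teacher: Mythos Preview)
Your proposal is correct and follows essentially the same route as the paper: pointwise convergence of $(A_\eps\tilde\theta^k)_t(\gamma^k_t)$ at all but countably many $t$ (via Helly plus the uniform estimate $|(A_\eps\tilde\theta^k)_t(z)-(A_\eps\tilde\theta)_t(z)|\le\eps^{-1/2}\|\tilde\theta^k_t-\tilde\theta_t\|_{L^2}$), continuity of $\Theta$ for the remaining terms, and then dominated convergence on $(0,T)^2$. The paper's own proof is a two-line sketch pointing back to the same ingredients; your write-up simply fills in the details it omits, including the handling of $s^k\to s$ through $\Theta$, which the paper leaves implicit.
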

\begin{lem}
Let $C_4$ be the constant given in point (iv), Theorem \ref{3.1t}, then we have:
\begin{equation}\label{5.19}
\int_{(0,T)^2}\int\bigg(|\tilde{\theta}_{t_2}(\gamma_{t_2})-\tilde{\theta}_{t_1}(\gamma_{t_1}))-(\Theta(s,\gamma_{t_2},t_2)-\Theta(s,\gamma_{t_1},t_1))^+|-C_4(t_2-t_1)\bigg)^+\chi_{t_1<t_2}dt_1dt_2d\lambda=0.
\end{equation}
\end{lem}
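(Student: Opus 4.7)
Plan: Following the strategy used in Lemmas \ref{5.8l}, \ref{5.10l} and \ref{5.11l}, I would reduce to the regularized functional $H_\eps$ introduced in the preceding lemma, pass to the narrow limit along $\lambda^n$, and invoke the discrete estimate of Theorem \ref{3.1t}(iv).

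Step 1 (Reduction to $H_\eps$). Denote by $H(\tilde\theta,s,\gamma)$ the integrand appearing in (\ref{5.19}), and by $H_\eps$ the analogous functional with $\tilde\theta$ replaced by $A_\eps\tilde\theta$ throughout. Since each $\tilde\theta_t \in Y_{B_3}$ is right continuous on $[0,1)$, $(A_\eps\tilde\theta)_t(z) \to \tilde\theta_t(z)$ as $\eps \to 0$ for every $(t,z)$; hence by bounded convergence inside the $(t_1,t_2)$-integral, $H_\eps \to H$ pointwise on $Y\times\mathbb{R}\times X_{B_1}$. A second application of bounded convergence gives $\int H\,d\lambda = \lim_{\eps \to 0}\int H_\eps\,d\lambda$, so it suffices to show $\int H_\eps\,d\lambda \leq C\eps$ for a constant $C$ independent of $\eps$.

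Step 2 (Narrow limit to $\lambda^n$). The preceding lemma asserts that $H_\eps$ is continuous on $Y\times\mathbb{R}\times X_{B_1}$, so by $\lambda^n \rightharpoonup \lambda$ and uniform boundedness of $H_\eps$,
$$\int H_\eps\,d\lambda = \lim_{n\to\infty}\int H_\eps\,d\lambda^n.$$

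Step 3 (Swapping regularization and interpolation). Using the $1$-Lipschitz property $|(|a - b^+| - c)^+ - (|a' - b^+| - c)^+| \leq |a - a'|$, I would replace $A_\eps\tilde\theta$ by $\tilde\theta$, and then $\bar\theta^n_\sigma$ by the discrete solution $\theta^n_\sigma$, inside the explicit representation of $\int H_\eps\,d\lambda^n$ coming from (\ref{4.3}). The first swap introduces an error controlled by Lemma \ref{6.4ln} and bounded by $O(T^2 B_3 \eps)$; the second swap introduces an error controlled by Theorem \ref{3.1t}(v) and bounded by $O(T^2 C_5\sqrt{\delta t})$, since by (\ref{3}) the interpolant $\bar\theta^n_\sigma$ differs from $\theta^n_\sigma$ on each time interval $[k\delta t,(k+1)\delta t)$ by the increment across that step.

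Step 4 (Discrete estimate). After the swaps the remaining integrand is
$$\bigl(\bigl|\theta^n_{\sigma,t_2}(F^n_{\sigma,t_2}(z))-\theta^n_{\sigma,t_1}(F^n_{\sigma,t_1}(z)) - \bigl(\Theta(\theta^{M,n}_{0,\sigma}(z),F^n_{\sigma,t_2}(z),t_2) - \Theta(\theta^{M,n}_{0,\sigma}(z),F^n_{\sigma,t_1}(z),t_1)\bigr)^+\bigr| - C_4(t_2-t_1)\bigr)^+.$$
Since $\theta^n_\sigma$ and $F^n_\sigma$ are constant on each half-open interval $[k\delta t,(k+1)\delta t)$, one may, at the cost of a $\sup|\partial_t\Theta|\cdot\delta t$ perturbation of the $\Theta$ terms, replace $t_i$ by the nearest discrete time $k_i\delta t$ with $k_1 \leq k_2$. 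When $k_1<k_2$, Theorem \ref{3.1t}(iv) applied at midpoint $(k_1+k_2)\delta t/2$ with half-width $(k_2-k_1)\delta t/2$ bounds the $|\cdot|$-term by $C_4(k_2-k_1)\delta t + 2C_4\delta t \leq C_4(t_2-t_1)+3C_4\delta t$; when $k_1=k_2$ it is bounded by $\sup|\partial_t\Theta|\cdot\delta t$. In either case the positive part after subtracting $C_4(t_2-t_1)$ is $O(\delta t)$, and integrating over $(t_1,t_2,z)\in(0,T)^2\times[0,1]$ produces an $O(T^2\delta t)$ contribution.

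Step 5 (Conclude). Collecting the estimates gives $\int H_\eps\,d\lambda^n \leq O(\eps) + O(\sqrt{\delta t})$. Letting $n\to\infty$ (so $\delta t \to 0$) yields $\int H_\eps\,d\lambda \leq C\eps$, and then $\eps\to 0$ gives $\int H\,d\lambda = 0$; as $H\geq 0$, this forces the integrand to vanish $\lambda$-almost everywhere, which is (\ref{5.19}).

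The main obstacle is matching the continuous-time integration in $(t_1,t_2)\in(0,T)^2$ with the discrete estimate of Theorem \ref{3.1t}(iv), which only controls $\hat\theta^n$ at pairs of discrete times, while simultaneously absorbing the piecewise-linear vs.\ piecewise-constant discrepancy between $\bar\theta^n_\sigma$ and $\theta^n_\sigma$. Since the raw integrand $H$ is not continuous on $Y\times\mathbb{R}\times X_{B_1}$ (the evaluation $\gamma\mapsto\gamma_t$ being continuous only off a countable set, and $\tilde\theta_t$ only right continuous), the passage to the narrow limit must be routed through the continuous surrogate $H_\eps$ and the uniform regularization bound of Lemma \ref{6.4ln}, exactly as in the proofs of Lemmas \ref{5.8l} and \ref{5.10l}.
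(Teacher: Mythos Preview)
Your proposal is correct and follows essentially the same route as the paper: regularize via $A_\eps$ to obtain the continuous functional $H_\eps$, pass to the narrow limit along $\lambda^n$, swap $A_\eps\tilde\theta$ for $\tilde\theta$ and $\bar\theta^n_\sigma$ for $\theta^n_\sigma$ at cost $O(\eps)+O(\sqrt{\delta t})$ using Lemma~\ref{6.4ln} and Theorem~\ref{3.1t}(v), and finish with Theorem~\ref{3.1t}(iv). One minor simplification: your detour in Step~4 through discrete times $k_i\delta t$ is unnecessary, since Theorem~\ref{3.1t}(iv) is already stated for arbitrary $t,\eps$ with $[t-\eps,t+\eps]\subset[0,T)$, so you may directly take $t=(t_1+t_2)/2$, $\eps=(t_2-t_1)/2$ and read off the bound $C_4(t_2-t_1)+2C_4\delta t$ without any rounding.
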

\begin{proof}
Write the left hand side of (\ref{5.19}) to be $\int H(\tilde{\theta},s,\gamma)d\lambda$, with the definition of $H$ similar to $H_{\eps}$ in the last lemma(without $A_{\eps}$). Then we can estimate
\begin{equation*}
\begin{split}
\int_{Y\times\mathbb{R}\times[0,1]\times X_{B_1}}H(\tilde{\theta},s,\gamma)d\lambda(\tilde{\theta},s,z,\gamma)=\int (H-H_{\eps})(\tilde{\theta},s,\gamma)d\lambda+\lim_{n\rightarrow\infty}\int H_{\eps}(\tilde{\theta},s,\gamma)d\lambda^n.
\end{split}
\end{equation*}
The first term will tend to zero as $\eps\rightarrow0$. For the second term, we have
\begin{equation*}
\begin{split}
\int H_{\eps}(\tilde{\theta},s,\gamma)d\lambda^n&\leq\int|H_{\eps}-H|d\lambda^n+\int H(\tilde{\theta},s,\gamma)d\lambda^n\\
&\leq2T\int_0^T\int|\tilde{\theta}_t(\gamma_t)-
(A_{\eps}\tilde{\theta})_t(\gamma_t)|d\lambda^n+\int H(\tilde{\theta},s,\gamma)d\lambda^n\\
&\leq 2T^2B_3\eps+\int H(\tilde{\theta},s,\gamma)d\lambda^n.
\end{split}
\end{equation*}
In the last inequality above, we used Lemma \ref{6.4ln}. To deal with the remaining term, first we can write: 
$$
\int H(\tilde{\theta},s,\gamma)d\lambda^n=\int_{Y_{B_2}}
d\mu_0(\theta)\sum_{\sigma\in S_{\theta}}n^n\mu_{1\sigma(1)}^{\theta}
\cdots\mu_{n\sigma(n)}^{\theta}\int_{[0,1]}H(\bar{\theta}^n_{\sigma},\theta^{M,n}_{0,\sigma}(z),F^n_{\sigma}(z))dz.
$$
Fix some $\theta\in Y_{B_2}$, and $\sigma\in S_{\theta}$, we can calculate:
\begin{equation*}
\begin{split}
\int H(\bar{\theta}^n_{\sigma}&,\theta^{M,n}_{\sigma}(z),F^n_{\sigma}(z))dz\leq 2T\int_0^T\int_{[0,1]}|\bar{\theta}^n_{\sigma,t}(F^n_{\sigma,t}(z))-\theta^n_{\sigma,t}(F^n_{\sigma,t}(z))|dtdz\\
&+\int H(\theta^n_{\sigma},\theta^{M,n}_{\sigma}(z),F_{\sigma}^n(z))dz\leq 2T^2C_5\sqrt{\delta t}+T^2C_4\delta t.
\end{split}
\end{equation*}
In the second inequality above, we used the point (iv) and (v) of Theorem \ref{3.1t}, and that $F^n_{\sigma,t}$ is measure preserving. 

Now the proof is finished by first letting $n\rightarrow\infty$ and then let $\eps\rightarrow0$.
\end{proof}

As a corollary, we deduce that 
\begin{cor}\label{5.14c}
For $\lambda-a.e$ $(\tilde{\theta},s,z,\gamma)$, it holds:
$$
|\alpha(t_2)-\alpha(t_1)-(\Theta(s,\gamma_{t_2},t_2)-\Theta(s,\gamma_{t_1},t_1)^+|\leq C_4(t_2-t_1),
$$
for any $0<t_1<t_2<T$. Here $C_4$ is the constant given in point (iv) of Theorem \ref{3.1t}.
\end{cor}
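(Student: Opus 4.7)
The plan is to upgrade the integral identity \eqref{5.19} just proved into a pointwise statement, and then remove the $\mathcal{L}^2$-a.e.\ qualifier by exploiting left continuity of $\alpha$ and $\gamma$. First, since the integrand in \eqref{5.19} is nonnegative, Fubini's theorem yields a $\lambda$-measurable set $\Omega\subset Y\times\mathbb{R}\times[0,1]\times X_{B_1}$ of full $\lambda$-measure such that for every $(\tilde\theta,s,z,\gamma)\in\Omega$, the inequality
$$
|\tilde\theta_{t_2}(\gamma_{t_2})-\tilde\theta_{t_1}(\gamma_{t_1})-\bigl(\Theta(s,\gamma_{t_2},t_2)-\Theta(s,\gamma_{t_1},t_1)\bigr)^+|\leq C_4(t_2-t_1)
$$
holds for $\mathcal{L}^2$-a.e.\ pair $(t_1,t_2)$ with $t_1<t_2$.

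Next, by Lemma \ref{5.8l} and Remark \ref{1.4r}, for $\pi_{14}\#\lambda$-a.e.\ $(\tilde\theta,\gamma)$ there is a Lebesgue-null set $N_{\tilde\theta,\gamma}\subset(0,T)$ outside of which $\alpha(t)=\tilde\theta_t(\gamma_t)$. Intersecting the two full-measure conditions, I obtain a set of full $\lambda$-measure on which the bound above holds for $\mathcal{L}^2$-a.e.\ $(t_1,t_2)$ with $t_1<t_2$ and, for each such $(t_1,t_2)$ lying in a further full-measure subset, one may replace $\tilde\theta_{t_i}(\gamma_{t_i})$ by $\alpha(t_i)$ for $i=1,2$. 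Call the resulting full-measure set of $(t_1,t_2)$ pairs $E_{\tilde\theta,s,\gamma}\subset\{t_1<t_2\}$.

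Fix now any $(\tilde\theta,s,z,\gamma)$ for which $E_{\tilde\theta,s,\gamma}$ is of full measure, and let $0<t_1<t_2<T$ be arbitrary. For every sufficiently small $\delta>0$ the rectangle $(t_1-\delta,t_1)\times(t_2-\delta,t_2)$ is contained in $\{t_1'<t_2'\}$ and therefore intersects $E_{\tilde\theta,s,\gamma}$ in a set of positive measure; in particular I can pick a sequence $(t_1^n,t_2^n)\in E_{\tilde\theta,s,\gamma}$ with $t_1^n\uparrow t_1$ and $t_2^n\uparrow t_2$. For each such $n$,
$$
|\alpha(t_2^n)-\alpha(t_1^n)-\bigl(\Theta(s,\gamma_{t_2^n},t_2^n)-\Theta(s,\gamma_{t_1^n},t_1^n)\bigr)^+|\leq C_4(t_2^n-t_1^n).
$$
By left continuity of $\alpha$ and of $\gamma$ together with the continuity of $\Theta$, I may let $n\to\infty$ and recover the claimed inequality at the prescribed $(t_1,t_2)$. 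This proves the corollary on a set of full $\lambda$-measure.

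The only delicate point is the passage from the $\mathcal{L}^2$-a.e.\ bound to the pointwise bound for every $(t_1,t_2)$; this is precisely where the choice of the left-continuous monotone representative $\alpha$ from Remark \ref{1.4r} (and the bounded variation, hence left-continuous, structure of $\gamma\in X_{B_1}$) is essential. Once this approximation is set up, continuity of $\Theta$ does the rest.
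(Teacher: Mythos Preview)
Your proof is correct and is precisely the argument the paper leaves implicit: the corollary is stated in the paper without proof, as an immediate consequence of \eqref{5.19}, and your plan—passing from the integral identity to an $\mathcal{L}^2$-a.e.\ pointwise bound via Fubini, replacing $\tilde\theta_t(\gamma_t)$ by $\alpha(t)$ on a co-null set, and then upgrading to all $(t_1,t_2)$ by approximating from the left using the left continuity of $\alpha$ and $\gamma$—is the natural and intended route. One cosmetic remark: you do not actually need the approximating sequence to be monotone; any sequence $(t_1^n,t_2^n)\in E_{\tilde\theta,s,\gamma}$ with $t_i^n<t_i$ and $t_i^n\to t_i$ suffices, and such a sequence exists because each rectangle $(t_1-\tfrac1n,t_1)\times(t_2-\tfrac1n,t_2)$ meets the full-measure set $E_{\tilde\theta,s,\gamma}$ once $n$ is large enough.
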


With above preparation, we can check the point (iv) of Theorem \ref{1.6d}.
\begin{prop}
For $\lambda-$a.e $(\tilde{\theta},s,z,\gamma)$, we have the equality of measures:
\begin{equation}\label{5.20}
\partial_t(\theta_t(\gamma_t))=\big[\partial_t
(Q^{sat}(\theta_t(\gamma_t),\gamma_t,t))\big]^-\lfloor E_{\theta,s,\gamma}.
\end{equation}
where $E_{\theta,s,\gamma}$ is the wet set given by
$$
E_{\theta,s,\gamma}=\{t\in(0,T):(\theta(\gamma))^*(t)=s-Q^{sat}((\theta(\gamma))^*(t),\gamma_t,t)\}.
$$
\end{prop}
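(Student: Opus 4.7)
The plan is, for $\lambda$-a.e.\ $(\tilde\theta,s,z,\gamma)$, to work with the monotone left-continuous version $\alpha(t):=(\tilde\theta(\gamma))^*(t)$ (well defined by Lemma \ref{5.8l} and Remark \ref{1.4r}; note that $\partial_t(\tilde\theta_t(\gamma_t))=\partial_t\alpha$ as measures), and to prove the identity in two steps: first, that $\partial_t\alpha$ is concentrated on the wet set $E=E_{\tilde\theta,s,\gamma}=\{t:\alpha(t)=\Theta(s,\gamma_t,t)\}$; second, that on $E$ this positive measure equals $-\partial_t Q^{sat}(\alpha,\gamma,\cdot)$, which is $\geq 0$ there and hence coincides with the negative-part measure. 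Throughout I fix $(\tilde\theta,s,z,\gamma)$ outside a single $\lambda$-null set so that Lemmas \ref{5.5l}, \ref{5.8l}, \ref{5.11l} (for all $\eps=1/n$, $n\in\mathbb{N}$) and Corollary \ref{5.14c} apply; the almost-everywhere inequality of Lemma \ref{5.5l}, combined with left-continuity of $\alpha$ and $\gamma$ and smoothness of $\Theta$, upgrades to $\alpha(t)\geq\Theta(s,\gamma_t,t)$ for every $t\in(0,T)$.

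For the first step, decompose $U:=\{\alpha>\Theta(s,\gamma_\cdot,\cdot)\}=\bigcup_n U_n$ with $U_n=\{\alpha-\Theta(s,\gamma_\cdot,\cdot)>1/n\}$. Given $t_0\in U_n$, left-continuity of $\alpha-\Theta(s,\gamma_\cdot,\cdot)$ supplies $\delta>0$ with $\alpha-\Theta>1/(2n)$ on $(t_0-\delta,t_0]$; applying Lemma \ref{5.11l} at any $t$ in this interval (with $\eps=1/(2n)$) forces $\alpha$ to be constant on $[t,t+1/(4nC_2))$, which, for $t$ sufficiently close to $t_0$, contains an open neighborhood of $t_0$. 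Thus $U_n$ is covered by open plateau intervals of $\alpha$, so $\partial_t\alpha(U_n)=0$, and letting $n\to\infty$ yields $\partial_t\alpha(U)=0$, i.e.\ $\partial_t\alpha$ is concentrated on $E$.

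For the second step, the defining equation (\ref{dT}) gives $\alpha+Q^{sat}(\alpha,\gamma,t)=s$ on $E$; set $F(t):=\alpha(t)+Q^{sat}(\alpha(t),\gamma_t,t)-s$. Monotonicity of $Q^{sat}$ in $\theta$ together with $\alpha\geq\Theta$ yields $F\geq 0$ pointwise, while $F\equiv 0$ on $E$, and $F$ is BV as a smooth composition of monotone and BV quantities. The proposition reduces to showing $\partial_t F\lfloor E=0$: once this is granted, $\partial_t\alpha\lfloor E=-\partial_t Q^{sat}(\alpha,\gamma,\cdot)\lfloor E$, and, since $\partial_t\alpha$ is a positive measure (Lemma \ref{5.8l}) already concentrated on $E$, this coincides with $[\partial_t Q^{sat}(\alpha,\gamma,\cdot)]^-\lfloor E$. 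The absolutely continuous part of $\partial_t F$ vanishes on $E$ because $F$ attains its minimum value $0$ at every differentiability point of $E$, forcing $F'=0$ a.e.\ on $E$. The main obstacle is the singular (jump and Cantor) component: here I would exploit the first-step structure of $U$ as a countable union of maximal open plateau intervals of $\alpha$ together with Corollary \ref{5.14c} to match any right jump of $\alpha$ at a plateau endpoint lying in $E$ with an equal and opposite jump of $Q^{sat}(\alpha,\gamma,\cdot)$, yielding no contribution to $\partial_t F$ on $E$. An alternative, and perhaps cleaner, route is to pass to the limit in the exact discrete identity satisfied by the piecewise-constant $\hat\theta^n_{\sigma,j}$ of Section 3, which holds as a finite sum of matched Dirac masses supported precisely on the discrete wet times, using the narrow convergence $\lambda^n\to\lambda$ already established and the uniform BV bound of point (iii) of Theorem \ref{3.1t}.
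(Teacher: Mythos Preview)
Your first step is fine and matches the paper: Lemma \ref{5.11l} indeed shows $\alpha$ is locally constant on the dry set $\{\alpha>\Theta(s,\gamma_\cdot,\cdot)\}$, so $\partial_t\alpha$ lives on $E$. This is exactly the first half of the paper's argument, which is packaged inside the appendix Lemma \ref{6.1}.

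The second step, however, has a genuine error in the reduction. You claim the proposition follows from $\partial_t F\lfloor E=0$ with $F=\alpha+Q^{sat}(\alpha,\gamma,\cdot)-s$, i.e.\ from $\partial_t\alpha\lfloor E=-\partial_t Q^{sat}(\alpha,\gamma,\cdot)\lfloor E$. But this is strictly stronger than what is being asserted (and than what is true). Consider a point $t_0\in E$ at which $\alpha$ does not jump but $\gamma$ jumps \emph{down}: Corollary \ref{5.14c} (letting $t_2\searrow t_0$) only forces $\gamma(t_0^+)\leq\gamma(t_0)$ when $\alpha(t_0^+)=\alpha(t_0)$, and nothing rules out a strict inequality. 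At such $t_0$ one has $F(t_0)=0$ while $F(t_0^+)=\alpha(t_0)+Q^{sat}(\alpha(t_0),\gamma(t_0^+),t_0)-s>0$ by $\partial_z Q^{sat}<0$, so $\partial_t F(\{t_0\})>0$ and your claimed identity fails. Correspondingly $-\partial_t Q^{sat}(\alpha,\gamma,\cdot)\lfloor E$ need not be a positive measure, and your passage from $-\partial_t Q^{sat}\lfloor E$ to $[\partial_t Q^{sat}]^-\lfloor E$ breaks down. The correct target is the \emph{positive-part} identity $\partial_t\alpha\lfloor E=(\partial_t g)^+\lfloor E$ with $g=s-Q^{sat}(\alpha,\gamma,\cdot)$.

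The paper handles this by reducing to the abstract Lemma \ref{6.1}, applied with $f=\alpha$ and $g=s-Q^{sat}(\alpha,\gamma,\cdot)$, after verifying its three hypotheses from Lemma \ref{5.11l} and Corollary \ref{5.14c}. The proof of Lemma \ref{6.1} then establishes $\partial_t f(E')=(\partial_t g)^+(E')$ for every Borel $E'\subset\{f=g\}$ via a careful covering argument (Lemma \ref{6.2}) that controls $(\partial_t g)^+$ on small intervals by $f(b)-f(a)$ plus a Lipschitz error on the dry set; the jump condition (ii) there is precisely what absorbs the scenario above. Your sketch of matching jumps via Corollary \ref{5.14c} is pointing in the right direction for the jump part of the paper's argument, but it does not yield $\partial_t F\lfloor E=0$, and the Cantor part you flag as ``the main obstacle'' is left unresolved; the paper's covering argument is what actually closes both.
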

\begin{proof}
We choose $(\tilde{\theta},s,z,\gamma)$ such that the statement of Lemma 
\ref{5.8l}, \ref{5.11l}, and Corollary \ref{5.14c} hold. The plan is to apply Lemma \ref{6.1} to the functions: $f(t)=\alpha(t)$, and $g(t)=s-Q^{sat}(\alpha(t),\gamma(t),t)$. Here $\alpha(t)$ is the monotone increasing and left continuous version of $\theta_t(\gamma_t)$ chosen according to Remark \ref{1.4r}. This is possible since Lemma \ref{5.8l} holds.

First we verify that for $\lambda-a.e$ $(\tilde{\theta},s,z,\gamma)$,
 it holds $\alpha(t)\geq s-Q^{sat}(\alpha(t),\gamma(t),t)$, for any $t\in(0,T)$.
 This is the same as $\alpha(t)\geq\Theta(s,\gamma(t),t)$. This follows from Lemma \ref{5.5l}. Indeed, we have shown there that for any $t\in (0,T)$, $\theta(z^{\prime})\geq\Theta(s,z^{\prime},t)$ for $\zeta_t-a.e$ 
$(\theta,s,z^{\prime})$.
Recall the definition of $\zeta_t$, this is the same as saying for any fixed $t\in(0,T)$, $\theta_t(\gamma_t)\geq\Theta(s,\gamma_t,t)$ for $\lambda-a.e$ $(\tilde{\theta},s,z,\gamma)$. 
Now choose a countable dense subset $D$ of the set $\{t\in(0,T):\alpha(t)=\theta_t(\gamma_t)\}$.
Then $D\subset(0,T)$ is also dense.
Then for $\lambda-a.e$ $(\tilde{\theta},s,z,\gamma)$, it holds $\alpha(t_i)\geq\Theta(s,\gamma_{t_i},t_i)$, for any $t_i\in D$. Since $D$ is dense and both $\alpha(t)$, $\gamma(t)$ continuous from left, we see it is true for all $t\in(0,T)$.

Next we verify points (i)-(iii) in Lemma \ref{6.1}.

The point (i) follows from Lemma \ref{5.11l}. 
Indeed, if  $\alpha(t)>s-Q^{sat}(\alpha(t),\gamma(t),t)+\eps$, 
then we know $\alpha(t)>\Theta(s,\gamma(t),t)+\frac{\eps}{C_7}$ for some universal constant $C_7$.
 Now Lemma \ref{5.11l} implies that $\alpha(t^{\prime})=\alpha(t)$, $\gamma(t^{\prime})\leq\gamma(t)$, for any $0<t^{\prime}-t<\frac{\eps}{C_8}$.
Hence $\alpha(t^{\prime})=\alpha(t)>s-Q^{sat}(\alpha(t^{\prime}),\gamma(t^{\prime}),t^{\prime})-\sup|\partial_tQ^{sat}|(t^{\prime}-t)+\eps$.
If we still have $\eps>2\sup|\partial_tQ^{sat}|(t^{\prime}-t)$, then we have $f(t^{\prime})>g(t^{\prime})$. 
Hence $f(t')=f(t)$ and $f(t')>g(t')$ as long as $0<t'-t<\frac{\eps}{C_8+2\sup|\partial_tQ^{sat}|}$.
This verifies point (i).

Next we verify point (ii). This follows from Corollary \ref{5.14c}. 
Indeed, from that corollary,
 we can deduce $\alpha(t^+)-\alpha(t)=(\Theta(s,\gamma(t^+),t)-\Theta(s,\gamma(t),t))^+$, by fixing any $t_1=t$, and $t_2\searrow t$.
 If $\alpha(t^+)=\alpha(t)$, we conclude $\Theta(s,\gamma(t^+),t)\leq\Theta(s,\gamma(t),t)$. 
Therefore $\gamma(t^+)\leq\gamma(t)$ by strict monotonicity of $\Theta$ in $z$ variable. Then $Q^{sat}(\alpha(t^+),\gamma(t^+),t)\geq Q^{sat}(\alpha(t),\gamma(t),t)$.
If $\alpha(t^+)>\alpha(t)$, we must have $\alpha(t)=\Theta(s,\gamma(t),t)$, or $\alpha(t)=s-Q^{sat}(\alpha(t),\gamma(t),t)$, otherwise it contradicts Lemma \ref{5.11l}. 
But then $\alpha(t^+)=\Theta(s,\gamma(t^+),t)$.
Hence $\alpha(t^+)=s-Q^{sat}(\alpha(t^+),\gamma(t^+),t)$. In any case, we have $f(t^+)-f(t)=(g(t^+)-g(t))^+$.
This verifies point (ii).

Point (iii) again follows from Lemma \ref{5.11l}.
Indeed, from the monotonicity of $Q^{sat}$, we just need to show for any $[a,b)\subset\{f>g\}$,
 it holds $\alpha(t_2)=\alpha(t_1)$,
 and $\gamma(t_2)\leq\gamma(t_1)$, for any $t_1<t_2$, $t_1,\,t_2\in[a,b)$.
 Let $t^*=\sup\{t\in[t_1,t_2]:\alpha(t^{\prime})=\alpha(t_1),\,\gamma(t^{\prime})\leq\gamma(t_1),\textrm{ for any $t^{\prime}\leq t$}\}$. Then we must have $t^*=t_2$. Otherwise, since $f(t^*)>g(t^*)$, Lemma \ref{5.11l} allows us to push beyond $t_*$, giving a contradiction.

\end{proof}

\section{appendix}
\begin{lem}\label{6.1}
Let $f:(0,T)\rightarrow\mathbb{R}$ be monotone increasing, and $g:(0,T)\rightarrow\mathbb{R}\in BV((0,T))$, both continuous from left and bounded, with $f(t)\geq g(t)$, $\forall t\in(0,T)$. Suppose that for some constant $C>0$
\begin{equation*}
\begin{split}
&(i)\textrm{$f(t^{\prime})\equiv f(t)$, $f(t^{\prime})>g(t^{\prime})$ for any $t\in(0,T)$ with $f(t)>g(t)+\eps$ and any $t^{\prime}$ with $t^{\prime}-t<\frac{\eps}{C}$.}\\
&(ii)\textrm{$f(t^+)-f(t)=[g(t^+)-g(t)]^+$, for any $t\in(0,T)$.}\\
&(iii)\textrm{$g(t_2)-g(t_1)\leq C(t_2-t_1)$ for any $t_1<t_2\in[a,b)$ with $[a,b)\subset\{f>g\}$.}
\end{split}
\end{equation*} 
Then $\partial_t f$ is concentrated on the set $\{f=g\}$ and for any Borel set $E\subset\{f=g\}$, one has $\partial_tf(E)=(\partial_tg)^+(E)$. 
\end{lem}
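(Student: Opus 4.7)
My plan is to prove the lemma in two stages: (A) show that $\partial_t f$ is concentrated on $D := \{f = g\}$; and (B) identify $\partial_t f$ with $(\partial_t g)^+$ on Borel subsets of $D$.

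For (A), I would first check that $W := \{f > g\}$ is open in $(0,T)$. If $t \in W$ with $f(t) - g(t) > \eps > 0$, then (i) gives $f(t') = f(t) > g(t')$ for $t' \in [t, t + \eps/C)$, and left-continuity of $f, g$ extends this to a left-neighborhood of $t$. Decomposing $W = \bigsqcup_j (a_j, b_j)$ as a countable disjoint union of open components, (i) forces $f$ to be constant on each, with value $c_j$. Consequently $\partial_t f$ vanishes on each $(a_j, b_j)$, hence on $W$. Left-continuity also yields $a_j, b_j \in D$ with $g(b_j) = f(b_j) = c_j$ and $f(a_j) = g(a_j)$.

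For (B), it suffices to verify $\partial_t f([\alpha, \beta)) = (\partial_t g)^+(D \cap [\alpha, \beta))$ for $\alpha < \beta \in D$, since such sets form a $\pi$-system generating the Borel $\sigma$-algebra of $D$. With $\alpha, \beta \in D$ the left-hand side equals $f(\beta) - f(\alpha) = g(\beta) - g(\alpha) = (\partial_t g)([\alpha,\beta))$. Writing $(\partial_t g)^+(D \cap [\alpha,\beta)) = (\partial_t g)^+([\alpha,\beta)) - (\partial_t g)^+(W \cap [\alpha,\beta))$ and using the Jordan decomposition $(\partial_t g) = (\partial_t g)^+ - (\partial_t g)^-$, the equality reduces to
\[
(\partial_t g)^-([\alpha,\beta)) = (\partial_t g)^+(W \cap [\alpha,\beta)).
\]
A componentwise computation using (ii) at each $a_j$ yields $c_j - g(a_j) = [g(a_j^+) - g(a_j)]^+$ and hence $(\partial_t g)((a_j, b_j)) = c_j - g(a_j^+) = [g(a_j) - g(a_j^+)]^+$, which exactly matches the atomic mass of $(\partial_t g)^-$ at $a_j$. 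Summing over components gives $(\partial_t g)^+(W \cap [\alpha,\beta)) = \sum_j (\partial_t g)^-(\{a_j\})$.

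The remaining claim is that $(\partial_t g)^- \lfloor D$ has no mass outside $\bigcup_j \{a_j\}$. For the atomic part this follows from (ii): a negative jump of $g$ at $t \in D$ would force $f(t^+) = f(t) = g(t) > g(t^+)$, showing $t$ is a left endpoint of some $W$-component. For the continuous part, I would use the Lebesgue density theorem together with $g = f$ on $D$: at almost every density point of $D$, the approximate derivatives agree, so $g' = f' \geq 0$ a.e.\ on $D$, which eliminates the absolutely continuous contribution on $D$; hypothesis (iii), via the decomposition $g(t) = Ct - (Ct - g(t))$ into a Lipschitz and an increasing function, bounds $\partial_t g$ above by $C\mathcal{L}^1$ on each $(a_j, b_j)$ and confines any singular continuous variation to $W$. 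Once the interval identity is established, a standard monotone class argument extends $\partial_t f(E) = (\partial_t g)^+(E)$ to every Borel $E \subset D$. The main obstacle I anticipate is exactly this continuous-part analysis, since (iii) is assumed only on $W$ and one must carefully track how the absolutely continuous and singular continuous portions of $\partial_t g$ distribute between $D$ and $W$; the atomic matching via (ii) is by comparison routine.
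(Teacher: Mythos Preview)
Your part (A) is sound and essentially matches the paper's argument: left-continuity plus hypothesis (i) make $W=\{f>g\}$ open, each component carries a constant value of $f$, so $\partial_t f$ puts no mass on $W$.

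Part (B) is where your approach diverges from the paper's, and where it runs into trouble. Your reduction to the identity $(\partial_t g)^-([\alpha,\beta))=(\partial_t g)^+(W\cap[\alpha,\beta))$ is correct, and so is the further reduction to the claim that $(\partial_t g)^-\!\restriction D$ is purely atomic, concentrated on the left endpoints $\{a_j\}$. The atomic part you handle cleanly via (ii). But your treatment of the continuous part does not go through. Hypothesis (iii) only holds on the components $(a_j,b_j)\subset W$, so the decomposition $g(t)=Ct-(Ct-g(t))$ gives an increasing second term only on $W$; it says nothing about $D$. What (iii) actually yields is that the singular part of $(\partial_t g)^+$ vanishes on $W$ --- the opposite of what you need, which is that the nonatomic part of $(\partial_t g)^-$ vanishes on $D$. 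These are not related by any general duality, so the sentence ``confines any singular continuous variation to $W$'' is unjustified. You correctly flag this as the main obstacle, but as written it is a genuine gap, not a routine detail.

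The paper avoids this decomposition entirely. Instead it proves, as a separate lemma, the inequality
\[
(\partial_t g)^+([a,b))\;\le\; f(b)-f(a)+C\,\mathcal L^1\big([a,b)\cap\{f>g\}\big)
\]
for every half-open subinterval, by a direct partition argument using (iii) on the dry stretches. Both inequalities $\partial_t f(E)\le(\partial_t g)^+(E)$ and $\ge$ then follow from outer regularity of the measures involved: one covers $E\subset D$ by an open set with small excess $\partial_t f$-mass and small Lebesgue measure outside $E$, and applies the inequality componentwise. This route never needs to locate the singular continuous part of $(\partial_t g)^-$; the comparison happens at the level of interval masses, where the Lebesgue term absorbs exactly the contribution from $W$. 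Your structural decomposition is conceptually appealing but would require an additional argument of comparable strength to close; the paper's inequality-plus-regularity approach is both shorter and sidesteps the issue.
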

Before we prove this result, we prove the following lemma as a preparation.
\begin{lem}\label{6.2}
Under the assumptions of previous lemma, we have for any $[a,b)\subset(0,T)$
\begin{equation}
(\partial_tg)^+([a,b))\leq f(b)-f(a)+C\mathcal{L}^1([a,b)
\cap\{f>g\}).
\end{equation}
\end{lem}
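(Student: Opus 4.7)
The plan is to reduce the estimate to the pointwise inequality
\[
g(t)-g(s)\le f(t)-f(s) + C\,\mathcal{L}^1([s,t)\cap\{f>g\}) \qquad\text{for every } [s,t)\subset[a,b). \qquad (\star)
\]
Once this is in hand, for any partition $a=t_0<t_1<\cdots<t_N=b$ the right-hand side of $(\star)$ is nonnegative by the monotonicity of $f$, so summing the positive parts gives
\[
\sum_{i}(g(t_{i+1})-g(t_i))^+\le \sum_i\bigl[f(t_{i+1})-f(t_i)\bigr]+C\sum_i\mathcal{L}^1([t_i,t_{i+1})\cap\{f>g\}) =f(b)-f(a)+C\,\mathcal{L}^1([a,b)\cap\{f>g\}),
\]
and taking the supremum over partitions recovers $(\partial_t g)^+([a,b))$ on the left.

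To establish $(\star)$, set $h:=g-f\le 0$. I would first extract from (i) the following observation: \emph{if $h(\tau)<0$, then no sequence in $\{h=0\}$ can converge to $\tau$ from the right.} Indeed, taking $\varepsilon:=-h(\tau)/2>0$, condition (i) forces $f(t')>g(t')$ on $[\tau,\tau+\varepsilon/C)$. With this in hand I would split into three cases. If $h(s)=0$, then $g(t)-g(s)=g(t)-f(s)\le f(t)-f(s)$ since $g\le f$. If $h(s)<0$ and $\{h=0\}\cap[s,t]=\emptyset$, then $[s,t)\subset\{f>g\}$ and (iii) yields $g(t)-g(s)\le C(t-s)$. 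Otherwise I set $\tau:=\inf\{\sigma\in[s,t]:h(\sigma)=0\}$: applying (iii) on $[s,\tau)\subset\{f>g\}$ with endpoints $s$ and $\tau_n\nearrow\tau$, and using left-continuity of $g$, yields $g(\tau)-g(s)\le C(\tau-s)$; the first case applied on $[\tau,t)$ gives $g(t)-g(\tau)\le f(t)-f(\tau)\le f(t)-f(s)$; adding these proves $(\star)$, provided $h(\tau)=0$.

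The core difficulty is verifying $h(\tau)=0$ at this infimum. Because $h$ is only left-continuous, a priori $\tau$ could be approached exclusively from the right by points of $\{h=0\}$ with $h(\tau)<0$ at the limit---geometrically, $g$ would then have a right-atom lying inside the ``dry'' region $\{f>g\}$, and $(\star)$ would fail. The observation extracted from (i) rules out precisely this pathology: if $\tau\notin\{h=0\}$, then $h(\tau)<0$, and the observation forbids the right-approaching sequence in $\{h=0\}$ that the definition $\tau=\inf$ would otherwise produce. Hence $\tau\in\{h=0\}$ and $h(\tau)=0$. This is the unique point in the proof where (i) enters.
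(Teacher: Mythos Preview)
Your proof is correct. Both you and the paper reduce to a partition-wise estimate, use condition (i) to guarantee that the infimum $\tau$ of $\{f=g\}$ on a subinterval is actually attained (so $f(\tau)=g(\tau)$), then invoke (iii) on the initial $\{f>g\}$ segment and the global inequality $g\le f$ thereafter. The paper's organization differs slightly: rather than isolating a single pointwise inequality $(\star)$ valid on every subinterval, it works directly with a fixed partition and splits each $[t_{i-1},t_i)$ into \emph{three} pieces using both the first point $t_{i-1}'$ and the last point $t_i'$ of $\{f=g\}$, applying (iii) separately on each end segment. Your two-piece split is cleaner: once $\tau$ is reached with $f(\tau)=g(\tau)$, the bound $g(t)-g(\tau)\le f(t)-f(\tau)$ covers the entire remainder $[\tau,t)$ in one stroke, making the ``last point'' superfluous. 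Both arguments use (i) and (iii) but not (ii), and both yield the same constant.
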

\begin{proof}
First recall
$$
(\partial_tg)^+([a,b))=\sup\big\{\sum_{i=1}^n(g(t_i)-g(t_{i-1}))^+:a=t_0<t_1<\cdots<
t_n=b\big\}.
$$
We fix a partition appearing in the right hand side above. For each $i$, define
\begin{equation*}
\begin{split}
&t_{i-1}^{\prime}=\sup\{t\in[t_{i-1},t_i):[t_{i-1},t)\cap\{f=g\}=\emptyset\},\\
&t_i^{\prime}=\inf\{t\in[t_{i-1},t_i):[t,t_i)\cap\{f=g\}=\emptyset\}.
\end{split}
\end{equation*}
Let $D$ be the set of $i$ for which $[t_{i-1},t_i)\cap\{f=g\}\neq\emptyset$. Then for $i\in D$ one can calculate:

If $t_i>t_i^{\prime}$, choose $\eps<t_i-t_i^{\prime}$, since $[t_i^{\prime}+\eps,t_i)\subset\{f>g\}$, we see from point (iii):
\begin{equation*}
\begin{split}
g(t_i)-g(t_{i-1}^{\prime})&=g(t_i)-g(t_i^{\prime}+\eps)+g(t_i^{\prime}+\eps)-g(t_{i-1}^{\prime})\\
&\leq C(t_i-t_i^{\prime}-\eps)+g(t_i^{\prime}+\eps)-g(t_{i-1}^{\prime}).
\end{split}
\end{equation*}
Let $\eps\rightarrow0$, we see 
$$
g(t_i)-g(t_{i-1}^{\prime})\leq C(t_i-t_i^{\prime})+g((t_i^{\prime})^+)-g(t_{i-1}^{\prime}).
$$
On the other hand, since $[t_{i-1},t_{i-1}^{\prime})\subset\{f>g\}$, we can conclude 
$$
g(t_{i-1}^{\prime})-g(t_{i-1})\leq C(t_{i-1}-t_{i-1}^{\prime}).
$$
Combining above calculations, we get:
\begin{equation*}
\begin{split}
&\sum_{i=1}^n(g(t_i)-g(t_{i-1}))^+=\sum_{i\in D}(g(t_i)-g(t_{i-1}))^++\sum_{i\notin D}(g(t_i)-g(t_{i-1}))^+\\
\leq\sum_{i\in D,t_i>t_i^{\prime}}C(t_i-t_i^{\prime})+&\sum_{i\in D}C(t_{i-1}^{\prime}-t_{i-1})+\sum_{i\notin D}C(t_i-t_{i-1})\\
&+\sum_{i\in D,t_i>t_i^{\prime}}(g((t_i^{\prime})^+)-g(t_{i-1}^{\prime}))^++\sum_{i\in D,t_i=t_i^{\prime}}(g(t_i)-g(t_{i-1}^{\prime}))^+
\end{split}
\end{equation*}
$$\leq C\mathcal{L}^1([a,b)\cap\{f>g\})+\sum_{i\in D,t_i>t_i^{\prime}}(g((t_i^{\prime})^+)-g(t_{i-1}^{\prime}))^++\sum_{i\in D,t_i=t_i^{\prime}}(g(t_i)-g(t_{i-1}^{\prime}))^+.$$
In the first inequality above, we used condition (iii). If $i\in D$ and $t_i>t_i^{\prime}$, we will have
$$
g((t_i^{\prime})^+)-g(t_{i-1}^{\prime})\leq f(t_i)-f(t_{i-1}^{\prime}).
$$
Similarly for $i\in C$ and $t_i=t_i^{\prime}$, we see
$$
g(t_i)-g(t_{i-1}^{\prime})\leq f(t_i)-f(t_{i-1}^{\prime}).
$$
Hence
\begin{equation*}
\begin{split}
\sum_{i=1}^n(g(t_i)-g(t_{i-1}))^+&\leq C\mathcal{L}^1([a,b)\cap\{f>g\})+\sum_{i\in D}(f(t_i)-f(t_{i-1}')\\
&\leq C\mathcal{L}^1([a,b)\cap\{f>g\})+f(b)-f(a)
\end{split}
\end{equation*}
So the desired result follows.

\end{proof}
\begin{cor}
For any $(a,b)\subset(0,T)$,
\begin{equation}
(\partial_tg)^+((a,b))\leq f(b)-f(a^+)+C\mathcal{L}^1((a,b)\cap\{f>g\}).
\end{equation}
\end{cor}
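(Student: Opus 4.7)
The plan is a straightforward monotone-limit passage from Lemma \ref{6.2}. Choose a sequence $a_n\in(a,b)$ with $a_n\searrow a$; then $[a_n,b)\subset(0,T)$ for every $n$, so Lemma \ref{6.2} applies and gives
$$
(\partial_t g)^+([a_n,b))\le f(b)-f(a_n)+C\,\mathcal{L}^1\bigl([a_n,b)\cap\{f>g\}\bigr).
$$
Since $(a_n,b)\subset[a_n,b)$ and $(\partial_t g)^+$ is a nonnegative measure, the left-hand side also dominates $(\partial_t g)^+((a_n,b))$, so the real work is simply to take $n\to\infty$ in the displayed inequality.

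Next I would pass to the limit term by term. Because $a_n\searrow a$ with $a_n>a$, the intervals $[a_n,b)$ increase to $(a,b)$. Since $g\in BV((0,T))$, the positive part $(\partial_t g)^+$ is a finite nonnegative Borel measure on $(0,T)$, and upward continuity of measure yields $(\partial_t g)^+([a_n,b))\to(\partial_t g)^+((a,b))$. The same upward-continuity principle applied to $\mathcal{L}^1$ and to the sets $[a_n,b)\cap\{f>g\}$ gives $\mathcal{L}^1([a_n,b)\cap\{f>g\})\to\mathcal{L}^1((a,b)\cap\{f>g\})$. Finally, since $f$ is monotone increasing, $\lim_{n\to\infty}f(a_n)=f(a^+)$ by definition of the right-hand limit. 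Combining these three limits in the inequality above yields the claimed bound on $(\partial_t g)^+((a,b))$.

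I would not expect any real obstacle here: the only way the left endpoint $a$ enters the bound of Lemma \ref{6.2} is through the value $f(a)$, so substituting $a_n\searrow a$ and passing to the limit automatically converts $f(a)$ into $f(a^+)$ while the two measures on the right behave well under the exhaustion $[a_n,b)\uparrow(a,b)$. The argument uses no structural property of $f$ and $g$ beyond what is already encoded in Lemma \ref{6.2} and the standing assumption $g\in BV((0,T))$.
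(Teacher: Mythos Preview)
Your proof is correct and matches the paper's approach exactly: the paper's proof simply reads ``Apply previous lemma to $[a+\eps,b)$, and send $\eps\rightarrow0$,'' which is precisely your monotone-limit argument spelled out in detail.
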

\begin{proof}
Apply previous lemma to $[a+\eps,b)$, and send $\eps\rightarrow0$.
\end{proof}

Now we can prove Lemma \ref{6.1} with the help of previous lemma.
\begin{proof}
Here $\partial_tf$ is the Radon measure defined on $[0,T)$ such that $\partial_tf([a,b))=f(b)-f(a)$ for any $[a,b)\subset[0,T)$.

First we show that $\partial_tf$ is concentrated on the set $\{f=g\}$. We show that $\partial_t(\{f>g\})=0$. Indeed, if $t_0\in\{f>g\}$, let $\beta_{t_0}=\sup\{t^{\prime}\geq t_0:f(t^{\prime})=f(t)\}$. From the assumption, we know $\beta_{t_0}>t_0$. Also we know $\{f>g\}=\cup_{f(t)>g(t)}[t,\beta_t)$. Each component of the set $\{f>g\}$ contains a non-degenerate interval, hence there are only countably many components. Therefore, $\{f>g\}=\cup_i C_i$, where $C_i$ are connected components, and has form $[a_i,b_i)$ or $(a_i,b_i)$. It is not hard to see that $f$ remains a constant on $C_i$. Otherwise, by continuity from the left, and also the point(i), one can conclude $f(t)=g(t)$ for some $t\in C_i$, contradiction. So $\partial_tf(C_i)=f(b_i)-f(a_i)=0$, or $\partial_tf(C_i)=f(b_i)-f(a_i^+)=0$.

Next let $E\subset\{f=g\}$ be a Borel set,
 we want to show $\partial_tf(E)=(\partial_tg)^+(E)$. 
First we show $(\partial_tg)^+(E)\leq\partial_tf(E)$.
 Fix $\eps>0$, 
then from the outer regularity of the Radon measure $\partial_tf$ and $\mathcal{L}^1$, we can find an open set $U$, with $E\subset U$, $\partial_tf(U-E)<\eps$, and $\mathcal{L}^1(U-E)<\eps$. Write $U=\cup_i(a_i,b_i)$, with $(a_i,b_i)$ pairwise disjoint. Then we know $\sum_i\partial_tf((a_i,b_i))
\leq\partial_tE+\eps$, and $\sum_i\mathcal{L}^1((a_i,b_i)-E)<\eps$.

Then from previous corollary, we know
\begin{equation*}
\begin{split}
(\partial_tg)^+(E)\leq\sum_i
(\partial_tg)^+((a_i,b_i))&\leq \sum_i(f(b_i)-f(a_i^+))+
\sum_iC\mathcal{L}^1
((a_i,b_i)\cap\{f>g\})\\
&\leq\partial_tf(E)+
\eps+C\sum_i\mathcal{L}^1((a_i,b_i)-E)\leq\partial_tf(E)+2\eps.
\end{split}
\end{equation*}
Since $\eps$ is arbitrary, it follows that $(\partial_tg)^+(E)\leq\partial_tf(E)$.

Now to prove the reverse inequality, Again we choose a cover $E\subset U$, with $U=\cup_i(a_i,b_i)$, $(a_i,b_i)$ pairwise disjoint, such that $(\partial_tg)^+(E)\geq\sum_i(\partial_tg)^+((a_i,b_i))-\eps$.
 
We can assume $(a_i,b_i)\cap E\neq\emptyset$ for each $i$. Denote
\begin{equation*}
\begin{split}
&a_i^{\prime}=\sup\{t\in(a_i,b_i):(a_i,t)\cap\{f=g\}=\emptyset\},\\
&b_i^{\prime}=\inf\{t\in(a_i,b_i):[t,b_i)\cap\{f=g\}=\emptyset\}.
\end{split}
\end{equation*}
For those $i$ with $b_i^{\prime}<b_i$, we can decrease $b_i$, so that $f(b_i)-f((b_i^{\prime})^+)+|g(b_i)-g((b_i^{\prime})^+)|<\eps2^{-i}$. From the left continuity of $f$ and $g$, one has $f(b_i^{\prime})=g(b_i^{\prime})$.
Now if $b_i=b_i^{\prime}$, then 
$$
(\partial_tg)^+((a_i,b_i))\geq g(b_i)-g(a_i^+)\geq f(b_i)-f(a_i^+).
$$
If $b_i>b_i^{\prime}$, then
\begin{equation*}
\begin{split}
(\partial_tg)^+((a_i,b_i))&\geq(g((b_i^{\prime})^+)-g(b_i^{\prime}))^++(g(b_i^{\prime})-g(a_i^+))^+\\
&\geq f((b_i^{\prime})^+)-f(b_i^{\prime})+f(b_i^{\prime})-f(a_i^+)\geq f(b_i)-f(a_i^+)-\eps2^{-i}.
\end{split}
\end{equation*}
So sum up, we get
\begin{equation*}
\begin{split}
&\sum_i(\partial_tg)^+((a_i,b_i))\geq\sum_i(f(b_i)-f(a_i^+))-\eps\geq\partial_tf(E)-\eps.
\end{split}
\end{equation*}
So the proof is complete.
\end{proof}

\subsection*{Acknowledgements}

The algorithm in the paper was developed at an Environmental Modelling in Industry - Study Group, 21th-24th September 2015, Isaac Newton Institute, Cambridge. It was sponsored by the EPSRC Living with Environmental Change - Maths Foresees Network and the NERC Probability, Uncertainty and Risk in the Environment (PURE) Network 

The work of Jingrui Cheng was supported in part by the National Science Foundation under Grant DMS-1401490. Jingrui also wishes to thank his advisor Mikhail Feldman for helpful discussions and suggestions.

\end{document}